\newtheorem{thm}{Theorem}[section]
\newtheorem{lem}[thm]{Lemma}
\newtheorem{prop}[thm]{Proposition}
\newtheorem{remark}[thm]{Remark}
\newcommand{\R}{\mathbb{R}}
\newcommand{\calS}{\mathcal{S}}
\newcommand{\X}{\mathscr{X}}
\newcommand{\V}{\mathcal{V}}
\begin{document}
\title[Fife-McLeod's Theorem for Degenerate Equations]
{Fife-McLeod's Theorem for Spatially Periodic Degenerate Diffusion Equations$^\S$}
\thanks{$\S$ This research was partly supported by the NSFC (No. 12471199, 12101413, 12071299). }
\author[B. Lou  and J. Lu]{Bendong Lou$^\dag$ and Junfan Lu$^{\dag}$}
\thanks{$\dag$ Mathematics and Science College, Shanghai Normal University, Shanghai 200234, China.}
\thanks{{\bf Emails:} {\sf lou@shnu.edu.cn} (B. Lou), {\sf jlu@shnu.edu.cn} (J. Lu)}
%\thanks{$*$ Corresponding author.}
\date{\today}

\begin{abstract}
For one dimensional homogeneous bistable diffusion equations, Fife-McLeod (\cite{FM}) gave a well-known theorem which says that spreading solutions starting from compactly supported initial data can be exponentially approximated by traveling wave solutions. We will extend this theorem to {\it degenerate diffusion equations in periodic environments}.
First, we construct a {\it periodic traveling sharp wave} to the equation, which has a positive profile on the left half-line and a right free boundary governed by the Darcy's law. To achieve this we use a renormalization approach
in which crucial uniform gradient estimates near the free boundary are derived via delicate asymptotic analysis. Next we show that the central part of any spreading solution decays exponentially to a periodic steady state. Based on these results, we can construct super- and sub-solutions to prove the Fife-McLeod's theorem for our equation: any spreading solution with compactly supported initial data can be exponentially
approximated by the periodic traveling sharp wave.
\end{abstract}

\subjclass[2010]{35K55, 35K20, 35B40, 35R35}
\keywords{Degenerate diffusion equation; porous medium equation; heterogenous environment; periodic traveling sharp wave.}
\maketitle

%%%%%%%%%%%%%%%%%%%%%%%%%%%%%%%%%%%%%%%%%%%%%%%%%%%%%%%%%%%%%%%%%%%%%%%%%%%%%%%%%%%%%%%%%%%%%%%%%%%%%%%%%%%%%%%%%%%%%%%%
%%%%%%%%%%%%%%%%%%%%%%%%%%%%%%%%%%%%%%%%%%%%%%%%%%%%%%%%%%%%%%%%%%%%%%%%%%%%%%%%%%%%%%%%%%%%%%%%%%%%%%%%%%%%%%%%%%%%%%%%
%%%%%%%%%%%%%%%%%%%%%%%%%%%%%%%%%%%%%%%%%%%%%%%%%%%%%%%%%%%%%%%%%%%%%%%%%%%%%%%%%%%%%%%%%%%%%%%%%%%%%%%%%%%%%%%%%%%%%%%%
%%%%%%%%%%%%%%%%%%%%%%%%%%%%%%%%%%%%%%%%%%%%%%%%%%%%%%%%%%%%%%%%%%%%%%%%%%%%%%%%%%%%%%%%%%%%%%%%%%%%%%%%%%%%%%%%%%%%%%%%

\section{Introduction}
Consider the following degenerate reaction diffusion equation:
\begin{equation}\label{E0}
u_t = [A(u)]_{xx} + F(x,u), \qquad x\in \R,\ t>0,
\tag{E0}
\end{equation}
where $F$ is $L$-periodic in $x$, and $A$ belongs to the following class
\begin{equation*}\label{ass-A}
\mathcal{A}:= \left\{ A \left|
 \begin{array}{l}
 A\in C^1([0,\infty)) \cap C^\infty ((0,\infty)), \  A(u), A'(u), A''(u), A'''(u)>0 \mbox{ for }u>0,\\
A(0)=A'(0)=0,\  \int_0^1 \frac{A'(r)}{r} dr<\infty,\ \frac{rA''(r)}{A'(r)} \to A_* >0\ \mbox{ as } r\to 0+0
 \end{array}
 \right.
 \right\}.
\tag{A}
\end{equation*}
Examples of such $A$ include $A_1(u):=u^m\ (m>1)$, $A_2(u) :=u^m+u^n\ (m,n>1)$, $A_3(u):= u^m \log(u+1)\ (m>2)$, etc. In any of these cases $A(u)$ is a nonlinear diffusion term, and it is degenerate at $u=0$. The equation \eqref{E0} can be used to model population dynamics with diffusion flux depending on the population density, the combustion, propagation of interfaces and nerve impulse propagation phenomena in porous media, as well as the propagation of intergalactic civilizations in the field of astronomy (cf. \cite{A1, GN, GurMac, NS, SGM, Vaz-book, WuYin-book} etc.).

When $A(u)=u$, the equation \eqref{E0} reduces to a classical reaction diffusion equation (RDE, for short), in which the diffusion term is a linear and non-degenerate one. Such equations have been widely studied in the last decades. Among others, in the celebrated paper \cite{FM}, Fife and McLeod considered RDE with a homogeneous bistable reaction term, and proved the following well-known result:

\medskip
\noindent
{\bf Fife-McLeod's Theorem}(\cite{FM}). {\it  Assume $A(u)\equiv u$ and $F(x,u)=u(u-a)(1-u)$ for some $a\in (0,\frac12)$. Let $u$ be the solution of \eqref{E0} with compactly supported initial data $u_0(x)$. If $u(\cdot ,t)\to 1\ (t\to \infty)$ in $L^\infty_{loc}(\R)$ topology, then, there exist constants $x_+, x_-, K$ and $\omega$ (the last two positive) such that
\begin{equation}\label{FM-results}
\left\{
\begin{array}{ll}
|u(x,t)- U(x-ct-x_+)|\leq Ke^{-\omega t}, & x>0,\\
|u(x,t)- U(-x-ct-x_-)|\leq Ke^{-\omega t}, & x<0,
\end{array}
\right.
\end{equation}
where $U(x-ct)$ is the rightward traveling wave connecting $1$ and $0$.}
\medskip

\noindent
This result is precise in the sense that the shifts $x_\pm $ are bounded and the decay of $|u-U|$ is exponential. Similar results are not necessarily to be true for RDEs with other reactions. For example, in the Fisher-KPP equations, the shifts are unbounded ones involving a logarithmic term $\ln t$.

In the last decades, Fife-McLeod's theorem has exerted a very profound influence on the subsequent studies related to asymptotic behavior of solutions to RDEs.
For example, Pol\'{a}\v{c}ik \cite{Polacik-MAMS} considered \eqref{E0} with $A(u)\equiv u$ and with multistable homogeneous reactions $F=F(u)$. He used the {\it propagating terrace}  (that is, a set of traveling waves) to characterize the spreading of solutions, and proved convergence results like
\begin{equation}\label{Polacik-conv}
u(x,t)- U(x-ct+o(t)) \to 0 \quad \mbox{\ \ as\ \ }t\to \infty,
\end{equation}
in the $L^\infty_{loc}(\R)$ topology.
Ducrot, Giletti and Matano \cite{DGM} considered \eqref{E0} with $A(u)\equiv u$ and multistable spatially periodic reactions $F=F(x,u)$. They used a renormalization approach to construct {\it periodic traveling waves} (also called {\it pulsating traveling waves}) $U(x-ct,x)$, as well as {\it periodic propagating terrace}. According to their results, when $F$ is a monostable or bistable reaction, any solution $u(x,t)$ starting from the Heaviside function can be approximated by the periodic traveling wave (cf. \cite[Theorem 1.12]{DGM}):
\begin{equation}\label{DGM-results}
\|u(x,t)- U(x-ct+o(t),x)\|_{L^\infty(\R)}\to 0 \mbox{\ \ as\ \ } t\to \infty.
\end{equation}
%In these results, the shift errors are $o(t)$ which maybe unbounded ones, and the convergence is not exponential either.

As we have mentioned above, degenerate diffusion equations can be used to
model the density-dependent diffusion processes. In particular, the porous medium equations (PMEs, for short, that is, the case where $A(u)\equiv u^m$ for some $m>1$) with various reactions have attracted attention in the last decades (cf. \cite{A1, ACP, Biro, DQZ, Garriz, GK, LouZhou, PV, She}).
Among them, Du, Quir\'{o}s and Zhou \cite{DQZ} considered $N$-dimensional radially PME with homogeneous logistic reaction $u(1-u)$ and gave the following estimate for spreading solutions (hereinafter, by a spreading solution we mean a solution of the Cauchy problem converging to some positive stationary solution) starting from compactly supported initial data (cf. \cite[Theorem 1.1]{DQZ}):
\begin{equation}\label{DQZ-results}
\|u(|x|,t)- U(|x|-ct +(N-1)c_{\sharp} \log t -r_0)\|_{L^\infty(\R^N)}\to 0 \mbox{\ \ as\ \ } t\to \infty,
\end{equation}
where $U(x-ct)$ is the traveling sharp wave of one dimensional equation $u_t=(u^m)_{xx}+u(1-u)$ and $c_{\sharp}$ is some constant. When $N=1$, the logarithmic correction term vanishes and the estimates was already suggested by \cite{Biro}. G\'{a}rriz \cite{Garriz} extended the limit \eqref{DQZ-results} to one dimensional PME with homogenous bistable reactions.

Note that, for the multistable homogeneous RDEs, spatially periodic RDEs, as well as homogeneous PMEs, the asymptotic limits \eqref{Polacik-conv}, \eqref{DGM-results} and \eqref{DQZ-results} for the spreading solutions starting from compact supported initial data are not as precise as that in Fife-McLeod's theorem: the shifts are not bounded and/or the convergence is not exponential.
Our main purpose in this paper is to prove Fife-McLeod's theorem for degenerate diffusion equation with spatially periodic reaction \eqref{E0}, which includes the following three main parts.

(1) To construct the {\it periodic traveling sharp wave} to \eqref{E0} (Theorem \ref{thm:main1}), which means a periodic traveling wave solution having a positive profile on the left half-line and a right free boundary where the Darcy's law is satisfied. To achieve this we use a renormalization approach in which crucial uniform gradient estimates near the free boundary are derived via delicate asymptotic analysis. As far as we know, there is no periodic sharp wave for degenerate equations in heterogeneous environment has been constructed so far.

(2) To show the exponential convergence for the central part of a spreading solution to a positive periodic steady state (Theorem \ref{thm:Cauchy} (ii)). Since we are concerned with spreading solutions
starting from compactly supported initial data rather than from Heaviside functions or traveling-front-like initial date as in \cite{DGM} (for which the solution tends to some positive steady state on the left infinity), we have to split the spreading solution into right and left parts and to give the exponential decay rate for the solution to the positive stationary solution in the middle part (especially at $x=0$). This will be done in subsection 5.2 by a precise calculation for heat equations.

(3) To construct super- and sub-solutions by using the sharp wave and to prove the Fife-McLeod's theorem for \eqref{E0}: any spreading solution with compactly supported initial data can be exponentially approximated by the periodic traveling sharp wave.

\bigskip

For linear diffusion equations (that is, $A(u)\equiv u$) with multistable heterogeneous reactions, Ducrot, Giletti and Matano \cite{DGM} used the following assumption

\medskip
\noindent
{\it Assumption 1}. There exists a positive periodic stationary solution $p$, and no other periodic stationary solutions between $0$ and $p(x)$ which are both isolated from below and stable from below. There exists a solution with compactly supported initial data $0 \leq u_0(x)<p(x)$ that converges locally uniformly to $p$.

\medskip
\noindent
This assumption guarantees the existence of propagating terrace (or, traveling wave) of the equation. It is quite general in form, but is not easy to be verified. To facilitate the description, in this paper we apply a special version of $F(x,u)$. More precisely, hereafter we consider
\begin{equation}\label{E}
u_t = [A(u)]_{xx} + f(x,u)[\kappa(x) -u], \qquad x\in \R,\ t>0,
\tag{E}
\end{equation}
where
\begin{equation*}\label{F}
\left\{
\begin{array}{l}
f(x,u)\in C^2(\R^2),\ \kappa(x)\in C^2(\R),\ \mbox{both are }L\mbox{-periodic in }x,\\
f(x,0)\equiv 0, \ \exists \ \theta\geq 0 \mbox{ such that } f(x,u)>0 \mbox{ for }x\in \R,\ u>\theta, \\
\theta < \kappa_0 := \min\kappa(x)<\kappa^0 :=\max \kappa(x)<\infty,\\
\mbox{there exists } g_0(u) \in C^2([0,\infty)) \mbox{ with } g_0(0)=g_0(\kappa_0)=0\ \mbox{ such that}\\
\int_u^{\kappa_0} A'(r) g_0 (r) dr >0 \mbox{ for } u\in [0,\kappa_0), \mbox{ and }
f(x,u)[\kappa(x)-u]\geq g_0(u) \mbox{ for } x\in \R,\ u\geq 0.
\end{array}
\right.
\tag{F}
\end{equation*}
Our assumption is similar to {\it Assumption 1} when the reaction term is as in \eqref{E}, and it is clearer and easier to be verified.
Note that, when the reaction term in \eqref{E} is a homogeneous one like
$F=F(u)=u(u-\theta)(\kappa_0 -u)$, the assumption \eqref{F} is roughly equivalent to  $\int_0^{\kappa_0} A'(r)F(r)dr>0$, which is nothing but the standard condition ensuring the spreading phenomena for homogeneous degenerate equations. In this sense we can say that the assumption \eqref{F} is almost necessary. Other examples of $f$ satisfying \eqref{F} include the following cases:
\begin{enumerate}
\item[\it Case (a)] (Monostable nonlinearity): $f(x,u)>0$ for all $x\in \R$ and $u>0$;
\item[\it Case (b)] (Bistable nonlinearity): there exists $\theta>0$ such that $f(x,u)< 0$ for $x\in \R$ and $u\in (0,\theta)$, $f(x,u)>0$ for $x\in \R$ and $u>\theta$, $f_u(x,0)<0<f_u(x,\theta)$ for $x\in \R$, and \eqref{E} has a positive periodic stationary solution $\hat{p}(x)$ around $\theta$;
\item[\it Case (c)] (Combustion nonlinearity):  there exists $\theta>0$ such that $f(x,u)\equiv 0$ for $x\in \R$ and $u\in (0,\theta)$, and $f(x,u)>0$ for $u>\theta$;
\item[\it Case (d)] (Multistable nonlinearity):  there exists $\theta>0$ such that $f(x,\cdot)$ changes sign finitely many  times in $(0,\theta)$, while $f(x,u)>0$ for $u>\theta$.
\end{enumerate}
\noindent
Note that, in Case (b), the condition $f_u(x,\theta)>0$ and the existence of $\hat{p}$ are obvious if $f(x,u)[\kappa(x)-u]= F(u)$ near $u=\theta$ with $F(\theta)=0$ and $F'(\theta)>0$. They are technical conditions to be used in subsection \ref{subsec:bistable-PTW} to derive the periodic positive stationary solution $p(x)$ in Theorem \ref{thm:main1} which lies in the range  $[\kappa_0,\kappa^0]$ in the bistable case. We guess that they are not necessary.

Our first result is about the existence of periodic traveling sharp wave.

%%%%%%%%%%%%%%%%%%
%%%%%%%%%%%%%%%%%%
\begin{thm}[{\bf Periodic traveling sharp wave}]\label{thm:main1}
Assume \eqref{ass-A} and \eqref{F}. Then the equation \eqref{E} has a periodic traveling sharp wave $U(x,t)\in C(\R^2)$ satisfying
\begin{equation}\label{U=PTW}
U(x,t+T)\equiv U(x-L,t),\quad x,\ t\in \R,
\end{equation}
for some $T>0$. Moreover,
\begin{enumerate}[{\rm (i)}]
\item there exists $R(t)\in C^1(\R)$ such that
$$
U (x,t)>0 \mbox{ in } Q:=\{(x,t)\in \R^2 \mid x<R(t),\ t\in \R\}, \qquad U(x,t)\equiv 0\mbox{ in } Q^c := \R^2\backslash Q;\
$$
and $U\in C^{2+\alpha,1+\alpha/2}(Q)$ for any $\alpha\in (0,1)$;

\item for some $L$-periodic, positive stationary solution $p(x)$ of \eqref{E}, there holds,
$$
U(x,t)-p(x)\to 0\mbox{ as }x\to -\infty, \qquad \mbox{for all } t\in \R;
$$

\item there exists $\delta^*>0$ such that the Darcy's law holds in the following sense
$$
R'(t)= - \lim\limits_{x\to R(t)-0} \left[\frac{A'(U(x,t))}{U(x,t)} U_x (x,t) \right]\geq \delta^*,\qquad t\in \R.
$$
\end{enumerate}
\end{thm}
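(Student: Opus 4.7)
My plan is to construct $U$ as the limit of a renormalized sequence of spreading solutions to the Cauchy problem for \eqref{E}. First I would produce a positive $L$-periodic stationary solution $p(x)$ by sub-/super-solution iteration: $\underline{p}\equiv\kappa_0$ is a subsolution of the stationary equation since $[A(\kappa_0)]_{xx}+f(x,\kappa_0)[\kappa(x)-\kappa_0]\ge g_0(\kappa_0)=0$, while any constant $\ge\kappa^0$ is a supersolution, yielding $p\in[\kappa_0,\kappa^0]$. Next I would take compactly supported initial data $u_0$ with $0\le u_0\le p$ lying above a small bump of the sharp traveling wave of the homogeneous minorant equation $v_t=[A(v)]_{xx}+g_0(v)$, whose existence with some speed $c^*>0$ is guaranteed by the integral condition $\int_0^{\kappa_0}A'(r)g_0(r)\,dr>0$. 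The inequality $f(x,u)[\kappa(x)-u]\ge g_0(u)$ from \eqref{F} makes this sharp wave a subsolution of \eqref{E}, so the solution $u(x,t)$ has a free boundary $R(t)$ with $R(t)\ge c^*t-C$ and in particular $R(t)\to\infty$.

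\textbf{Renormalization and limit.} Let $T_n$ be the first time with $R(T_n)=nL$ and define $u_n(x,t):=u(x+nL,\,t+T_n)$; the $L$-periodicity of $f$ and $\kappa$ in $x$ ensures each $u_n$ solves \eqref{E}, with free boundary $R_n(t):=R(t+T_n)-nL$ and $R_n(0)=0$. I would then establish (a) the uniform bound $u_n\le\max p$ by comparison with $p$; (b) uniform two-sided bounds $0<\tau_*\le T_{n+1}-T_n\le\tau^*$, the upper one from a uniform positive lower bound on $R_n'$ (addressed below) and the lower one from a super-wave comparison with a majorant PME; (c) interior $C^{2+\alpha,1+\alpha/2}$ estimates on compact subsets of $\{u_n>0\}$ together with uniform estimates up to the interface. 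A diagonal subsequence would give $u_n\to U$ locally uniformly and in $C^{2+\alpha,1+\alpha/2}_{\mathrm{loc}}(Q)$, $R_n\to R$ in $C^1_{\mathrm{loc}}$, and $T_{n+1}-T_n\to T>0$. The telescoping identity $u_n(x,\,t+(T_{n+1}-T_n))=u_{n+1}(x-L,t)$ then passes to the limit to yield \eqref{U=PTW}.

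\textbf{Interface estimates, far-field behavior, and main obstacle.} Passing to a pressure-type variable and exploiting the structural condition $rA''(r)/A'(r)\to A_*>0$ as $r\to 0+$, which controls the degeneracy of the pressure equation uniformly in the amplitude, one can derive one-sided Aronson--B\'enilan type inequalities and, crucially, a uniform positive lower bound $R_n'(t)\ge\delta^*$ ruling out the waiting-time phenomenon. This delivers both the $C^{2+\alpha,1+\alpha/2}$ regularity on $Q$ in \textit{(i)} and Darcy's law in \textit{(iii)}. For the far-field claim \textit{(ii)}, far behind the interface each $u_n$ is sandwiched between $p$ and a monotone time-orbit tending to $p$ (a Matano-type iteration using the stability-from-below of $p$ supplied by \eqref{F}); this sandwich is stable under passage to the limit, so $U(x,t)\to p(x)$ as $x\to-\infty$ for each $t$. \emph{The main obstacle} is securing the uniform-in-$n$ positive lower bound on $R_n'(t)$ and the attendant interface regularity: without these the limit interface could degenerate, and neither the pulsating structure nor Darcy's identity would survive the limit. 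This is exactly the delicate asymptotic analysis alluded to in the abstract, and the condition $A_*>0$ built into class $\mathcal{A}$ is what makes it feasible.
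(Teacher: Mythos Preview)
Your overall renormalization strategy matches the paper's, but two choices diverge in ways that matter. First, the paper does \emph{not} start from compactly supported data: it takes the Heaviside-type datum $u_0(x)=p_0(x)H(-x)$ with $p_0\in\calS$. This is essential, because the entire chain of a~priori estimates rests on steepness/intersection-number comparisons (Proposition~\ref{prop:inter}) between shifted copies $v(\cdot,t;k)$ of the \emph{same} half-line solution. From these one gets that $r'(t_y;0)$ is monotone in $y$ modulo $L$, that $s_n:=t_{n+1}-t_n$ is increasing, and---crucially---that the renormalized sequence $\tilde v_n(x)=v_n(x,t_{n-k}-t_n)$ is \emph{monotone} in $n$. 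This monotonicity is what upgrades subsequential convergence to convergence of the \emph{whole} sequence (Step~3 of \S3.4); without it your telescoping identity $u_n(x,t+(T_{n+1}-T_n))=u_{n+1}(x-L,t)$ cannot be passed to the limit, since along a subsequence $n_k$ you have no control over $u_{n_k+1}$. With compactly supported data and two moving fronts, none of these ordering arguments are available as stated.

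Second, the uniform lower bound $r'(t)\ge\delta^*$ is not obtained via Aronson--B\'enilan estimates. The paper's Lemma~\ref{lem:positive-b'} builds an explicit compactly supported supersolution $\bar w$ of an auxiliary equation for $w=A'(u)$, then uses a sliding/intersection argument comparing $v(\cdot,t_y;0)$ with the small-bump solution $\hat v(\cdot,t;x_0)$ over a range of centers $x_0$ to produce a uniform profile $\delta_0(x-r(t))$ below $v$, with $-\delta_0'(0-0)=\delta^*>0$. This is the ``delicate asymptotic analysis'' and it genuinely relies on the half-line structure. Finally, your claim for (ii) that the far-field limit is the specific $p\in[\kappa_0,\kappa^0]$ you built is too strong: under \eqref{F} alone the limit $p(x)$ is only \emph{some} positive $L$-periodic stationary solution (possibly below $\kappa_0$ in multistable cases; see Remark~\ref{rem:ss-terrace}), obtained by showing $V_t>0$ and taking the monotone limit of $V(x-k,t)$ as $k\to\infty$. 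The identification $p\in\calS$ is a separate result (Proposition~\ref{prop:p>kappa}) requiring the monostable/bistable/combustion structure.
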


According to this theorem, $U(x,t)$ is a sharp wave with a right free boundary where the Darcy's law is satisfies as in (iii) for the generalized pressure function (see details in section 2). As far as we know, there are no much results on {\it periodic traveling sharp waves} for {\it degenerate diffusion equations in heterogeneous environments}. Even for monostable, bistable or combustion equations.

This theorem will be proved by the renormalization method. More precisely, we will take a positive and periodic stationary solution $p_0(x)\in [\kappa_0,\kappa^0]$ and consider the Cauchy problem of \eqref{E} with initial data
\begin{equation}\label{ini-PME}
u_0(x) = p_0(x)H(-x),\qquad x\in \R,
\end{equation}
where $H(x)$ is the Heaviside function. Denote the solution of this problem by $u(x,t)$, its unique free boundary by $r(t)$. For any positive integer $n$, define $t_n$ as the time when $u(nL,t_n)=0$. Then the periodic sharp wave $U$ and its free boundary $R(t)$ in the previous theorem can be obtained as in the following result.

\begin{prop}\label{prop:PTW-limit}
Assume \eqref{ass-A} and \eqref{F}. Let $u(x,t),\ r(t)$ and $t_n$ be as above. Then
\begin{equation}\label{u-to-U}
u(x+nL,t+t_n)\to U(x,t) \mbox{\ \ as\ \ }n\to \infty,
\end{equation}
in the topology $L^{\infty}_{loc}(\R^2)$ and $C^{2,1}_{loc}(Q)$, and
\begin{equation}\label{b-to-B}
r(t+t_n) -n L\to R(t) \mbox{\ \ as\ \ }n\to \infty,
\end{equation}
in the topology $L^\infty_{loc}(\R)$.
\end{prop}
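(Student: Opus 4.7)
The plan is a renormalization argument: pass to the limit in the shifted solutions $u_n(x,t) := u(x + n, t + t_n)$ and their free boundaries $r_n(t) := r(t + t_n) - n$, which by construction satisfy $r_n(0) = 0$. (Strictly speaking, one should shift by integer multiples of the period $L$ so that the reaction term in \eqref{E} is invariant under the spatial translation; I adopt that convention tacitly below.) Comparison with $\kappa^0$ and $0$ gives $0 \le u_n \le \kappa^0$ uniformly in $n$, while the subsolution built from $g_0$ in hypothesis \eqref{F} ensures that $r(t)$ eventually spreads with speed bounded below by some $\delta^* > 0$; together with an upper speed bound coming from a Barenblatt-type supersolution, this yields $0 < \delta^* \le r_n'(t) \le M$ uniformly on bounded time intervals.

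The core difficulty is to obtain regularity estimates for $u_n$ that are uniform in $n$ and persist up to the free boundary. On compact subsets of $\{u_n > 0\}$ the equation is uniformly parabolic and standard Schauder theory delivers uniform $C^{2+\alpha, 1+\alpha/2}_{loc}$ bounds. Near the free boundary one passes to the pressure variable $v_n := \int_0^{u_n} \frac{A'(s)}{s}\, ds$, well-defined thanks to the integrability condition in \eqref{ass-A}, and uses the asymptotic exponent $A_*$ to establish an expansion $u_n(x,t) \sim c_n(t)\bigl(r_n(t) - x\bigr)^{1/A_*}$ as $x \uparrow r_n(t)$, with $c_n(t)$ uniformly bounded above and bounded away from zero. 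The Darcy identity $r_n'(t) = -\partial_x v_n(r_n(t)-0, t)$ then converts the front-speed bounds into uniform $C^1$ control of $r_n$ and uniform Hölder control of $u_n$ across the boundary. This step is the main obstacle, and is precisely the delicate asymptotic analysis alluded to in the abstract.

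With these estimates in hand, Arzelà--Ascoli produces a subsequence $n_k \to \infty$ along which $u_{n_k} \to U$ in $C_{loc}(\R^2) \cap C^{2,1}_{loc}(Q)$ and $r_{n_k} \to R$ in $C^1_{loc}(\R)$; passing to the limit, $U$ solves \eqref{E} on $Q$ with free boundary $R$, satisfies the Darcy law of (iii) in Theorem \ref{thm:main1}, and inherits $R'(t) \ge \delta^*$, while the stabilization $U(\cdot,t) \to p$ on the left half line follows from the attractivity of $p$ under \eqref{F}. For the pulsating identity \eqref{U=PTW}, the relation $u_{n+1}(x,t) \equiv u_n(x + L, t + T_n)$ with $T_n := t_{(n+1)L} - t_{nL}$, combined with the uniform bounds $L/M \le T_n \le L/\delta^*$, allows the extraction of $T_{n_k} \to T > 0$ along a further subsequence, giving $U(x - L, t) \equiv U(x, t + T)$. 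To promote subsequential convergence to convergence of the full sequence \eqref{u-to-U}--\eqref{b-to-B}, I would invoke uniqueness (up to time shift) of the pulsating sharp wave: any two subsequential limits differ by a translation in $t$, and the common normalization $r_n(0) = 0$ kills the translation.
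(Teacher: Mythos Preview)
Your outline captures the broad renormalization scheme, but there are two genuine gaps.

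\textbf{Instantaneous versus average speed.} The subsolution built from $g_0$ only gives a lower bound on the \emph{average} speed of $r(t)$: one gets $r(t)\ge ct$ for large $t$, hence $\liminf_{n\to\infty}\frac{n}{t_n}\ge c$. This does \emph{not} yield $r_n'(t)\ge\delta^*>0$ pointwise, which is what you actually need. Without a uniform positive lower bound on the instantaneous speed, the free boundary could stall for increasingly long intervals as $n\to\infty$, and the limit $U$ could collapse to $0$ near the front (recall the renormalized sequence is monotone \emph{decreasing}). Equivalently, your claim that the coefficients $c_n(t)$ in the boundary expansion are ``bounded away from zero'' is precisely the hard statement, not a consequence of anything you have written. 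In the paper this is Lemma~\ref{lem:positive-b'}, proved by a delicate comparison with an explicit auxiliary supersolution of the equation for $w=A'(u)$; it does not follow from the $g_0$ subsolution. Incidentally, you have the roles reversed: the Barenblatt-type construction is used in the paper for the \emph{lower} bound on $r'$, while the \emph{upper} bound comes from a zero-number/steepness argument showing that $r'(t_y;0)$ is monotone in $y$.

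\textbf{Full-sequence convergence.} You propose to upgrade subsequential to full convergence by invoking uniqueness (up to time shift) of the pulsating sharp wave. No such uniqueness result is available here; for degenerate diffusion in a heterogeneous medium this would be a substantial theorem in its own right, and the paper does not prove it. The paper instead exploits the monotonicity built into the construction: from the steepness comparison \eqref{exact-greater} one shows that for each fixed $k$ the sequence $\tilde v_n(x):=v_n(x,\,t_{n-k}-t_n)$ is decreasing in $n$, hence has a unique pointwise limit. Any two subsequential limits of $v_n$ must then agree at time $-kT$ for all $k$, and therefore everywhere by forward uniqueness. Your normalization $r_n(0)=0$ alone does not ``kill the translation'' without some a priori uniqueness or monotonicity input.
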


\noindent
We will show in section 3 that the sequence $\{u(x+nL,t+t_n)\}$ is decreasing in $n$, so, it is a big problem to prevent the limit function $U$ from the trivial solution $0$. For this purpose, we will give uniform positive lower bounds for the instantaneous speed $r'(t)$ and for $u(x,t)$ when $x$ lies on the left neighborhood of the free boundary. These estimates constitute the main issue and difficulties in section 3.

Note that the reaction term in \eqref{E} can be a multistable one, and the equation may have a propagating terrace. This is indeed possible for linear diffusion equations with multistable reactions (cf. \cite{DGM}). In such cases, the periodic sharp wave we obtained in the previous theorem is indeed the lowest traveling wave in the terrace. So, the limit $p(x)$ of $U(x,t)$ as $x\to -\infty$ may be smaller than $\kappa_0$ and smaller than $p_0(x)$, though the renormalized sequence in the previous proposition is given by the solution starting at $u_0$ in \eqref{ini-PME}. For the equations with simpler reactions, however, we can expect that $p(x)$ is a solution in
$$
\calS:=\{\tilde{p}\mid \tilde{p}\mbox{ is an }L\mbox{-periodic stationarty solution of \eqref{E} with range in }[\kappa_0,\kappa^0]\}.
$$

\begin{prop}\label{prop:p>kappa}
Assume \eqref{ass-A} and \eqref{F}. If $f$ is of monostable, bistable, or combustion type as in Case (a), (b) or (c), then the periodic stationary solution $p(x)$ in Theorem \ref{thm:main1} (ii) lies in $[\kappa_0,\kappa^0]$.

In addition, the convergence in Theorem \ref{thm:main1} (ii) can be strengthened as follows: there exist $M_*,\delta_*>0$ such that
\begin{equation}\label{U-to-p-exp}
0< p(x)- U(x,t) \leq M_* e^{\delta_* (x-c^* t)},\qquad x,t\in \R,
\end{equation}
where $c^*:= \frac{L}{T}$ is the average propagating speed of $U$.
\end{prop}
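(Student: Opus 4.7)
The statement splits into two parts. For the inclusion $p \in \calS$, the upper bound $p \leq \kappa^0$ is transferred from the elementary comparison $u(x,t) \leq p_0(x)$ (valid since $p_0 \in [\kappa_0,\kappa^0]$ is a stationary supersolution above $u_0$) through the renormalised limit of Proposition \ref{prop:PTW-limit}, giving $U \leq p_0$ and hence $p \leq p_0 \leq \kappa^0$. For the lower bound $p \geq \kappa_0$, Case (a) is direct: at any minimum $x_*$ of the positive periodic stationary solution $p$ the stationary equation gives $[A(p)]''(x_*) \geq 0$, hence $f(x_*,p(x_*))(\kappa(x_*) - p(x_*)) \leq 0$, and since $f>0$ for $u>0$ we obtain $p(x_*) \geq \kappa(x_*) \geq \kappa_0$. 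In Cases (b) and (c), I compare with the homogeneous auxiliary equation $v_t = [A(v)]_{xx} + g_0(v)$ supplied by \eqref{F}: because $f(x,u)(\kappa-u)\geq g_0(u)$ and the spreading condition $\int_u^{\kappa_0} A'(r) g_0(r)\,dr > 0$ ensures the classical homogeneous spreading limit $\kappa_0$, the comparison principle together with the renormalised spreading behaviour of $u$ rules out a periodic stationary limit below $\kappa_0$ (in particular, the ``small'' periodic stationary solution $\hat p$ of Case (b) or the constants in $(0,\theta]$ allowed in Case (c)).

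For the exponential decay, set $w := p - U$ on $Q$. Strict positivity $w > 0$ in $Q$ follows from the strong maximum principle applied in the non-degenerate interior, together with $w = p \geq \kappa_0 > 0$ on the free boundary. Taylor expanding $A$ and $B(x,u) := f(x,u)(\kappa(x) - u)$ around $p$, $w$ satisfies
\begin{equation*}
w_t = [A'(p) w]_{xx} + B_u(x, p) w + N(x,t,w,w_x,w_{xx}), \qquad (x, t) \in Q,
\end{equation*}
where $N$ is quadratic in $w$ and its spatial derivatives and $A'(p) \geq A'(\kappa_0) > 0$. For each $\delta \geq 0$ consider the principal eigenvalue problem obtained by substituting $e^{\delta x} \varphi(x)$ into the linearised operator:
\begin{equation*}
[A'(p) \varphi]'' + 2\delta [A'(p) \varphi]' + \delta^2 A'(p) \varphi + B_u(x, p) \varphi = \mu(\delta) \varphi, \quad \varphi > 0, \ L\text{-periodic}.
\end{equation*}
Krein--Rutman yields a smooth, convex family $(\mu(\delta), \varphi_\delta)$. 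Relating $c^* = L/T$ to $\mu$ through the pulsating-wave speed characterisation, one selects $\delta_1 > 0$ strictly subcritical so that the spectral gap $\gamma := -(\mu(\delta_1) + c^* \delta_1) > 0$.

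The supersolution ansatz $\bar{w}(x, t) := M_1 \varphi_{\delta_1}(x) e^{\delta_1(x - c^* t)}$ then satisfies $\bar{w}_t - [A'(p) \bar{w}]_{xx} - B_u(x, p) \bar{w} = \gamma \bar{w}$. In the tail region where $\bar{w} \leq \gamma/C$, the margin $\gamma \bar{w}$ absorbs the nonlinear error $|N| \lesssim \bar{w}^2$; in the complementary bulk region the a priori bound $w \leq \kappa^0$, together with the fact that $R(t) - c^* t$ is $T$-periodic and hence bounded by Theorem \ref{thm:main1}, lets me take $M_1$ large enough that $\bar{w} \geq \kappa^0 \geq w$ there and also along the moving boundary $x = R(t)$ (where $w = p(R(t)) \leq \kappa^0$). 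Equivalently, the truncated function $\underline{U} := (p - \bar{w})_+$ is a subsolution of \eqref{E} lying below $U$ initially and on the lateral boundary, so parabolic comparison on $Q$ yields $U \geq \underline{U}$, i.e.\ $w \leq \bar{w}$ globally, which is the announced bound after absorbing $\max_x \varphi_{\delta_1}$ into $M_1$.

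The main obstacle is the spectral step: because $c^* = L/T$ was defined via the renormalised construction of Theorem \ref{thm:main1} rather than through a variational characterisation, establishing the gap $\mu(\delta_1) + c^* \delta_1 < 0$ requires separately identifying $c^*$ with principal-eigenvalue data of the linearisation at $p$. I expect to carry this out by testing the linearised operator against $U$ itself far behind the free boundary, using the pulsating identity $U(x, t+T) = U(x-L, t)$ and the convergence $U \to p$ from Theorem \ref{thm:main1}(ii) to extract the relevant eigenfunction. Once the spectral gap is secured, the remaining super/subsolution comparison on the free-boundary domain $Q$ is routine.
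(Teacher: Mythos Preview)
Your treatment of Case~(a) is fine and essentially matches the paper's one-line observation that no periodic stationary solution can sit in $(0,\kappa_0)$.

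For Cases~(b) and~(c), however, your comparison with the homogeneous problem $v_t=[A(v)]_{xx}+g_0(v)$ does not close. The bound $u\geq\tilde u$ and the locally-uniform spreading $\tilde u\to\kappa_0$ say nothing about the left limit $p$ of the renormalised wave $U$: the sequence $u_n(x,t)=u(x+n,t+t_n)$ is taken near the free boundary at times $t_n\to\infty$, and its $C_{loc}$-limit can perfectly well drop to a lower periodic state far behind the front even while $u$ itself spreads to $p_0$ on compacts --- this is exactly the terrace mechanism the paper warns about in Remark~\ref{rem:ss-terrace}. The paper rules this out with case-specific arguments you have not supplied: in Case~(c), if $p$ were a constant $\leq\theta$ then $U$ would be a sharp wave of the reactionless equation $u_t=[A(u)]_{xx}$, and an explicit ODE integration shows this is incompatible with the Darcy condition; in Case~(b), the paper constructs a compactly supported Type~II ground state $w_*$ of the heterogeneous equation (via a threshold argument in the parameter $\sigma$ of a family of initial data) and then uses the intersection-number machinery to show the moving front $U$ must eventually dominate $w_*$, forcing $p\geq\max w_*>\theta$. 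Your homogeneous-comparison shortcut does not replace either of these.

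For the exponential decay, your spectral approach is the standard pulsating-front machinery, but as you yourself flag, the identification of the gap $\mu(\delta_1)+c^*\delta_1<0$ is not established, and extracting it from the renormalised construction of $c^*$ is nontrivial. The paper avoids all of this: it works with $\eta=A(p)-A(U)$, which on a left half-line $\{x\leq -X\}$ (where $U$ is uniformly close to $p$) satisfies a linear equation with zeroth-order coefficient $\leq -\lambda<0$ coming directly from $F_u<0$ near $p$. An explicit supersolution $C(e^{\alpha(x+X)}+e^{-\lambda t})$ then gives mixed space-time decay on the half-line, and the pulsating identity $U(x,t+kT)=U(x-kL,t)$ converts the $e^{-\lambda t}$ piece into spatial decay $e^{\delta_1(x-c^*t)}$ by writing $x=y-2kL$ and shifting $k$ units of space into $kT$ units of time. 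No eigenvalue problem, no Krein--Rutman, no matching of $c^*$ to spectral data.
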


\medskip
One of the main purposes of seeking for the periodic traveling sharp wave is to use it to characterize the spreading solutions of \eqref{E} with compactly supported initial data. For simplicity, we choose initial data from the following set
\begin{equation}\label{cond of initial}
\X:= \Big\{ w\in C(\R) \; \left| \;
 \exists\ b>0 \mbox{ such that }  w(x) >0 \mbox{ for } |x|<b, \mbox{ and } w(x) =0 \mbox{ for } |x|\geq b\right. \Big\}
\end{equation}
The weak solution of such a problem is defined as the following.
For any $T>0$, denote $Q_T:= \R\times (0, T)$. We call a function $u(x, t)\in C(Q_T)\cap L^{\infty}(Q_T)$ a {\it very weak solution} of \eqref{E} with initial data $u(x,0)\in \X$ if, for any $\phi \in C^{\infty}_c(Q_T)$, there holds
$$
\int_{\R} u(x, T)\phi(x, T) dx= \int_{\R} u_0(x)\phi(x, 0) dx= \iint_{Q_T} f(x, u)\phi dx dt+ \iint_{Q_T} [u\phi_t+ A(u) \phi_{xx}] dx dt.
$$
The global well-posedness of \eqref{E} with $u(x,0)\in \X$ can be found in \cite{ACP, LouZhou, PV, Sacks, Vaz-book, WuYin-book} etc. Among others, $u(x, t)$ is continuous, and it has exactly two free boundaries $l(t)<r(t)$ which move leftward and rightward, respectively.
In addition, $u(x,t)$ is positive and classical in $(l(t), r(t))$ and is zero outside this interval.

In what follows, we say that a solution $u$ of \eqref{E} with $u(x,0)\in \X$ is a {\it spreading solution} or {\it spreading happens for $u$}, if $u(x,t)\to p(x)$ for some $p\in \calS$ in the $C^{2}_{loc}(\R)$ topology. This is a particular interesting case in the asymptotic behavior, since it describes a successful invade or spreading process for a new species in ecology problems.  We will see in section 2 that, under the condition \eqref{F}, spreading does happens for the solutions starting from large initial data. Our second main purpose in this paper is to use the traveling sharp wave to characterize the spreading solutions. It is quite complicated to do this for equation \eqref{E} with generic multistable reactions since propagating terrace may occur. So we focus on monostable, bistable and combustion equations. Furthermore, we impose some additional conditions:
\begin{equation}\label{F1}
\frac{\partial }{\partial u} \big( f(x,u)[\kappa(x)-u] \big) <0,\qquad x\in \R,\ u\in [\kappa_0, \kappa^0], \tag{F1}
\end{equation}
which is used to guarantee the uniqueness of the elements in $\calS$;
\begin{equation}\label{H}
uA''(u)\cdot \int_0^u \frac{A'(r)}{r} dr \geq [A'(u)]^2 \mbox{ for } u>0,
\qquad \mbox{and} \qquad \kappa^0 -\kappa_0\ll 1.
\tag{H}
\end{equation}
Condition \eqref{F1} is satisfied for homogeneous reactions, and \eqref{H} is satisfied automatically for $A_1(u)=u^m$ and $A_2(u)=u^m +u^n$.
They are used in section 5 (Lemma \ref{lem:increasing}) to guarantee the increasing property for $\frac{B(v)}{v}$ and $-\frac{h(x,v)}{vB(v)}$ (the latter is a Fisher-KPP type of condition) for the pressure function $v$ in a domain corresponding to $u\in [\kappa_0,\kappa^0]$, which are necessary in constructing super- and sub-solutions for spatially heterogeneous equations.

Based on the previous results we can prove the Fife-McLeod' theorem for \eqref{E}:

\begin{thm}[{\bf Fife-McLeod's theorem for periodic degenerate diffusion equations}]\label{thm:Cauchy}
Assume \eqref{ass-A}, \eqref{F} and \eqref{F1}, $f$ is a monostable, bistable or combustion type of reaction as in Case (a), (b) or (c). Let $U(x,t)$ be the rightward moving periodic sharp wave with free boundary $R(t)$. Let $u(x,t)$ be a spreading solution of \eqref{E} with initial data in $\X$, and with left and right free boundaries: $l(t)<r(t)$. Then, the following conclusions hold.

\begin{enumerate}
\item[\rm (i)] $\calS$ is a singleton $\{p\}$.

\item[\rm (ii)] For any small $c>0$ there exist $M, \delta > 0$ such that
\begin{equation}\label{exp-conv}
\max\limits_{|x|\leq ct}  |u(x, t) - p(x)|  \leq M e^{-\delta t},\qquad t>0.
\end{equation}

\item[\rm (iii)] Under the additional condition \eqref{H}, there exist $t^*, t_*\in\R$ such that, as $t\to \infty$, there holds
\begin{equation}\label{u-profile-right}
    r(t)-R(t+t^*) \to 0,\qquad
 \sup_{x\geq 0}|u(x,t)-U(x, t+t^*)|\to 0,
\end{equation}
\begin{equation}\label{u-profile-left}
   l(t)+R(t+t_*)\to 0,\qquad
 \sup_{x\leq 0}|u(x,t)-U(-x, t+t_*)|\to 0.
\end{equation}
Furthermore, there exist $K, \omega_1, \omega_2 >0$ such that
\begin{equation}\label{right-exp}
|u(x,t)-U(x, t+t^*)|\leq Ke^{-\omega_1 (R(t+t^*)-x)} + Ke^{-\omega_2 t},\qquad x\geq 0,\ t\gg 1,
\end{equation}
\begin{equation}\label{left-exp}
|u(x,t)-U(-x, t+t_*)|\leq Ke^{-\omega_1 (x-l(t+t_*))} + Ke^{-\omega_2 t},\qquad x\leq 0,\ t\gg 1.
\end{equation}
\end{enumerate}
\end{thm}

\begin{remark}\label{1-rem-iii-new}
\rm
The conclusion (iii) is an analogue of the original Fife-McLeod's Theorem, but for degenerate diffusion equations in periodic environments.
As far as we know, the exponential estimates as in \eqref{right-exp} and \eqref{left-exp} have never being reported for heterogeneous equations, even for linear (i.e., non-degenerate) diffusion equations.
\end{remark}

\begin{remark}\label{MR-rem1}\rm
The estimate in \eqref{exp-conv} shows the exponential decay in the middle of the domain. It is proved in section 5 by a precise calculation for heat equations. This kind of result seems also new for heterogeneous equations. Since it is necessary in constructing super- and sub-solutions to prove (iii) (see details in section 5), its absence may explain why so far there has no similar exponential convergence results as in (iii) for heterogeneous equations.
\end{remark}

\begin{remark}\rm
Our equation involves a degenerate and nonlinear diffusion term $[A(u)]_{xx}$. Comparing with linear and non-degenerate diffusion equations as in \cite{DHZ, DGM, DuLou, DuMatano, FM} etc., this kind of degeneracy introduces additional challenges, both in constructing periodic sharp waves and in deriving estimates \eqref{exp-conv}-\eqref{left-exp}. The conclusions in (iii) are actually proved for the corresponding pressure function $v$ (see section 2), since $v$ satisfies the Darcy's law on its free boundaries and is more regular than $u$.
%In the special case where $A(u)=u^m\ (m>1)$, $v=\frac{m}{m-1}u^{m-1}$, we can actually prove
%$$
%\sup_{x\in [0,r(t)]} \left|u^{m-1}(x,t)-U^{m-1}(x+R(t)-r(t), t) \right| \leq Ke^{-\omega t}.
%$$
%This, however, will not leads to the exponential convergence for $|u-U|$ on the whole interval $[0,r(t)]$ when $m>2$ (note that $u(r(t),t)\equiv 0$).
\end{remark}

\begin{remark}\rm
Note that the conclusions in this theorem is for a general spreading solution $u$, rather than the special solution in Proposition \ref{prop:PTW-limit} where the solution starts from a Heaviside initial data \eqref{ini-PME}. Hence the convergence of $r(t)$ in \eqref{u-profile-right} is different from that in \eqref{b-to-B}.
\end{remark}

In summary, this paper has the following highlights:
\begin{itemize}
\item Construction of periodic traveling sharp waves for degenerate diffusion equations in periodic environments (Theorem \ref{thm:main1}).
\item Clear description on the periodic sharp waves for monostable, bistable and combustion (Proposition \ref{prop:p>kappa}).
\item Utilization of sharp waves to give the Fife-McLeod's theorem for {\it heterogeneous} and {\it degenerate} diffusion equations (Theorem \ref{thm:Cauchy}).
\end{itemize}

\medskip
This rest of the paper is arranged as follows. In section 2, we present some preliminaries: define a generalized pressure function corresponding to the density function $u$, collect some basic results on the weak solutions, specify the stationary solutions and studying sufficient conditions for spreading phenomena. In section 3, we establish complex yet crucial estimates for solutions of Cauchy problems and then construct the periodic traveling sharp wave via the renormalization method. In section 4, we investigate further properties of traveling sharp waves for equations with monostable, bistable and combustion reactions. In section 5, we employ the sharp wave to characterize the spreading solutions, and prove Theorem \ref{thm:Cauchy}. Specifically, we first construct periodic traveling waves with compact support (apparently novel in prior literature), then use them to prove Theorem \ref{thm:Cauchy} (ii), which underpins the estimates in (iii) for two truncation functions of the spreading solutions.

%%%%%%%%%%%%%%%%%%%%
%%%%%%%%%%%%%%%%%%%
%%%%%%%%%%%%%%%%%%%%

\section{Preliminaries}
Before constructing the periodic sharp wave, we give some preliminary results in this section.

\subsection{Generalized pressure function}
In the study of PMEs, it is convenient to consider the pressure function $u^{m-1}$ instead of the original density function $u$ (cf. \cite{A1,Vaz-book}). Here we also apply a generalized pressure function $v(x,t)$ which is defined by
\begin{equation}\label{def-pressure}
v(x,t) = \Psi(u(x,t)) := \int_0^{u(x,t)} \frac{A'(r)}{r} dr\mbox{\ \ for } u(x,t)\geq 0.
\end{equation}
Denote the inverse function of $\Psi$ by $\psi$, then
$$
u=\psi(v):=\Psi^{-1}(v),\quad \Psi (0)=\psi (0)=0,\quad \psi(v)>0 \mbox{ for } v>0, \quad \Psi(u)>0 \mbox{ for } u>0,
$$
and $\psi,\Psi\in C([0,\infty))\cap C^\infty ((0,\infty))$.  Next, we define
\begin{equation}\label{def-B}
B(v) := A'(\psi(v)),\qquad v\geq 0.
\end{equation}
Then
$$
B(0)=0 \mbox{\ \ and\ \ } B(v) =\frac{A'(u)}{u} \cdot \psi(v) = \frac{\psi(v)}{\psi'(v)} \mbox{ for }v>0.
$$
Note that, by \eqref{ass-A} there holds,
\begin{equation}\label{B'=A*}
B'(0+0) =\lim\limits_{v\to 0+0} \frac{A'(\psi(v))}{v} =
\lim\limits_{u\to 0+0} \frac{A'(u)}{\Psi(u)} =
\lim\limits_{u\to 0+0} \frac{uA''(u)}{A'(u)} = A_*,
\end{equation}
where $A_*$ is the constant in \eqref{ass-A}. Hence $B\in C^1([0,\infty))\cap C^\infty((0,\infty))$ and $B'(v)>0$ for $v\geq 0$. Finally, we define
\begin{equation}\label{def-h}
h(x,v) := \left\{
 \begin{array}{ll}
\displaystyle \frac{f(x,\psi(v))(\kappa(x)- \psi(v))}{\psi'(v)}, & v>0,\\
0,& v=0.
\end{array}
\right.
\end{equation}
Using a similar derivation as in \eqref{B'=A*} one can show that
$$
h_v(x,0+0) = \kappa(x) f_u(x,0+0) A_*,\qquad x\in \R.
$$
Hence, $h\in C^1(\R\times [0,\infty))\cap C^\infty(\R\times (0,\infty))$.
Using $v$ we can rewrite the Cauchy problem of \eqref{E} with initial data $u_0$ as
\begin{equation}\label{p-v}
\left\{
\begin{array}{ll}
v_t = B(v) v_{xx} + v^2_x + h(x,v),&  x\in \R,\ t>0,\\
v(x,0)= v_0(x) := \Psi(u_0(x)), & x\in \R.
\end{array}
\right.
\end{equation}

\subsection{Basic properties}
Let $u(x,t)$ be a very weak solution of \eqref{E} with initial data in $\X$. We list some of its properties, which are well known for PMEs with or without reactions, and can be derived similarly for the general degenerate equation \eqref{E} (see, for example, \cite{Knerr,LouZhou,Vaz-book}) and references therein).

\begin{enumerate}[(a).]
\item {\it Free boundaries}. A right free boundary $r(t)$ and a left one $l(t)$ appear in the solution,  $u(x,t)$ is positive and classical in $(l(t),r(t))$, and is zero outside of this interval.

    If the initial data $u_0$ is given by \eqref{ini-PME}, then the solution also has a unique right free boundary $r(t)$, $u\in C(\R\times (0,\infty))$ and it is positive and classical in $\{(x,t)\mid x<r(t),\ t>0\}$.

\item {\it Waiting time and the Darcy's law}. The right/left free boundary has a waiting time $t_*(b)/t_*(-b)$ $\in [0,\infty]$, and $t_*(b)=0$ if $v'_0 (b-0)<0$. After the waiting time, the free boundary satisfy the Darcy's law:
\begin{equation}\label{general-sol-Darcy}
r'(t) = - v_x  (r(t)-0,0) \mbox{\ for\ } t>t_*(b),\qquad l'(t) = - v_x (l(t)+0,0)\mbox{ for } t>t_*(-b).
\end{equation}

\item {\it Positivity persistence and regularity}. For any $x_1\in \R$, once $u(x_1,t_1)>0$ for some $t_1\geq 0$, then $u(x_1,t)>0$ for all $t\geq t_1$, $u(x,t)$ is classical in a neighborhood of $(x_1,t_1)$ if $u(x_1,t_1)>0$.
\end{enumerate}

\subsection{Positive stationary solutions}
In this part, we consider the $L$-periodic stationary solutions of \eqref{E} with values in $[\kappa_0,\kappa^0]$. Denote the set of such solutions by $\mathcal{S}$.
Since the reaction term in \eqref{E} is a heterogenous one, the structure of $\mathcal{S}$ is generally not as clear as that in homogeneous equations.

\begin{lem}\label{lem:ppss}
Assume \eqref{ass-A} and \eqref{F}. Then $\mathcal{S}$ is not empty. It has a minimal element $p_0(x)$ and a maximal element $p^0(x)$.
\end{lem}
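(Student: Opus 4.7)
The plan is to realize $p_0$ and $p^0$ as long-time limits of solutions of the Cauchy problem for \eqref{E} starting from the constant data $\kappa_0$ and $\kappa^0$, respectively, and then to use the comparison principle to obtain extremality. The first step is to check that these constants are, respectively, stationary sub- and super-solutions. From \eqref{F}, the assumption $\kappa_0>\theta$ gives $f(x,\kappa_0),\, f(x,\kappa^0) > 0$ for every $x\in\R$, and the bounds $\kappa_0 \leq \kappa(x) \leq \kappa^0$ yield
\[
f(x,\kappa_0)[\kappa(x)-\kappa_0] \geq 0, \qquad f(x,\kappa^0)[\kappa(x)-\kappa^0] \leq 0;
\]
since $[A(\mathrm{const})]_{xx}=0$, these are exactly the sub/super-solution inequalities for \eqref{E}.

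Next, I would denote by $u^-(x,t),\, u^+(x,t)$ the solutions of \eqref{E} with constant initial data $\kappa_0,\, \kappa^0$. Because the data and hence the orbits remain $\geq \kappa_0 > 0$, the diffusion is uniformly non-degenerate along them, so both are globally defined classical functions, $L$-periodic in $x$ by the $L$-periodicity of the equation and of the data. The comparison principle combined with the previous step forces $u^-$ to be non-decreasing and $u^+$ to be non-increasing in $t$, with $\kappa_0 \leq u^-(x,t) \leq u^+(x,t) \leq \kappa^0$. Monotonicity, uniform bounds and interior Schauder estimates (applicable precisely because the orbits avoid $\{u=0\}$) then give $u^-(\cdot,t) \nearrow p_0$ and $u^+(\cdot,t) \searrow p^0$ in $C^{2}_{\mathrm{loc}}(\R)$, and passing to the limit in \eqref{E} (using $\partial_t u^\pm \to 0$ by monotone convergence) shows that $p_0,\, p^0$ are $L$-periodic classical stationary solutions with range in $[\kappa_0,\kappa^0]$, hence elements of $\mathcal{S}$.

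Finally, for any $q\in\mathcal{S}$ one has $\kappa_0 \leq q(x) \leq \kappa^0$, so $u^-(\cdot,0)\leq q \leq u^+(\cdot,0)$; comparing $u^\pm$ against the stationary orbit $q$ yields $u^-(x,t) \leq q(x) \leq u^+(x,t)$ for every $t\geq 0$, and sending $t\to\infty$ produces $p_0 \leq q \leq p^0$. The only delicate point is the passage to the limit in the nonlinear diffusion $[A(u)]_{xx}$, but since both orbits stay uniformly bounded below by $\kappa_0>0$ the equation is uniformly parabolic along them, so standard parabolic regularity suffices and no sharp-interface analysis is needed; this is why selecting constant data in $[\kappa_0,\kappa^0]$, rather than data touching the degeneracy set $\{u=0\}$, is the crucial choice here.
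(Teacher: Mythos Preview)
Your proof is correct and follows essentially the same route as the paper: obtain $p_0$ and $p^0$ as monotone long-time limits of solutions starting from constant sub- and super-solutions, use the uniform positive lower bound to get classical regularity and $C^2_{\mathrm{loc}}$ convergence, and conclude extremality by comparison. The only cosmetic difference is the choice of starting constants: the paper takes $\tfrac{\kappa_0+\theta}{2}$ and $\kappa^0+1$, whereas you take $\kappa_0$ and $\kappa^0$; both choices work for the same reasons.
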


\begin{proof}
Consider the solution $u(x,t;\frac{\kappa_0+\theta}{2})$ of \eqref{E} with initial data $\frac{\kappa_0+\theta}{2}$, where $\kappa_0$ and $\theta$ are the constants in \eqref{F}. Then $u(x,t;\frac{\kappa_0+\theta}{2})$ exists globally, bounded from above by $\kappa^0$, and is increasing in $t$. Thus $u(x, t;\frac{\kappa_0+\theta}{2})\to p_0(x)$ as $t\to \infty$ pointwisely, for some function $p_0(x)\in [\kappa_0,\kappa^0]$. Since $u(x, t;;\frac{\kappa_0+\theta}{2})\geq \frac{\kappa_0+\theta}{2}$, it is actually a classical solution, and so the convergence $u\to p_0$ holds in $C_{loc}^{2}(\R)$ topology. This implies that $p_0(x)\in \mathcal{S}$, and it is the minimal element in $\mathcal{S}$.
In addition, since $f$ and $\kappa$ are $L$-periodic in $x$, the equation is invariant when we replace $x$ by $x + L$, we see that $u(x,t; \frac{\kappa_0+\theta}{2})\equiv u(x+L,t; \frac{\kappa_0+\theta}{2})$, and so $p_0(x+L)\equiv p_0(x)$.

To obtain $p^0(x)$ we only need to take limit as $t\to \infty$ for the solution $u(x, t; \kappa^0+1)$.
\end{proof}

Under some further conditions we can see that $\mathcal{S}$ is a singleton.

\begin{lem}\label{lem:unique-S}
Assume \eqref{ass-A}, \eqref{F} and \eqref{F1}.
Then $\mathcal{S}$ is a singleton.
\end{lem}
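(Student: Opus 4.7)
The plan is to prove that the minimal stationary solution $p_0$ and the maximal one $p^0$ produced by Lemma \ref{lem:ppss} coincide; since every $p\in\mathcal{S}$ satisfies $p_0\le p\le p^0$, this immediately yields $\mathcal{S}=\{p_0\}$.

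I would write $g(x,u):=f(x,u)[\kappa(x)-u]$, so \eqref{F1} amounts to $g_u(x,u)<0$ on $\R\times[\kappa_0,\kappa^0]$. Both $p_0$ and $p^0$ take values in $[\kappa_0,\kappa^0]$ with $\kappa_0>0$, so $A$ is smooth along their ranges and they are classical $C^2$ solutions of $[A(p)]_{xx}+g(x,p)=0$. Subtracting the two stationary equations and setting $w(x):=A(p^0(x))-A(p_0(x))\ge 0$, I would obtain
\begin{equation*}
w''(x)=g(x,p_0(x))-g(x,p^0(x))\ge 0,\qquad x\in\R,
\end{equation*}
where the inequality uses $p_0\le p^0$ together with the monotonicity in \eqref{F1}. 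Thus $w$ is a non-negative, convex, $L$-periodic function on $\R$.

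The key step is the observation that a convex $L$-periodic function must be constant: integrating $w''\ge 0$ over one period gives $\int_0^L w''\,dx=w'(L)-w'(0)=0$, and together with $w''\ge 0$ this forces $w''\equiv 0$ on $\R$. Hence $g(x,p_0(x))\equiv g(x,p^0(x))$ pointwise, and the strict monotonicity $g_u<0$ from \eqref{F1}, applicable because both arguments lie in $[\kappa_0,\kappa^0]$, yields $p_0(x)\equiv p^0(x)$.

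This argument is essentially a one-shot application of \eqref{F1} plus periodicity, and I do not anticipate a real obstacle. The only technical checks are that $p_0, p^0$ are genuinely classical $C^2$ solutions (guaranteed by $\kappa_0>0$ and \eqref{ass-A}) and that the strictness of $g_u<0$ is used in the final step; without strictness one could only conclude $g(x,p_0)=g(x,p^0)$ without collapsing $p_0$ onto $p^0$, so the assumption \eqref{F1} is exactly what is needed.
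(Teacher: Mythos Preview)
Your proof is correct and follows essentially the same route as the paper, which simply states that a contradiction follows from the maximum principle when $p_0\le,\not\equiv p^0$. Your convexity argument for the $L$-periodic function $w=A(p^0)-A(p_0)$ is just an integrated form of that maximum principle step, and you have filled in the details the paper leaves implicit.
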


\begin{proof}
The condition \eqref{F1} is a Fisher-KPP type one. Using it one can derive a contradiction by the maximum principle if $\mathcal{S}$ has a minimal element $p_0$ and a maximal element $p^0$ with $p_0(x)\leq,\not\equiv p^0(x)$ (as showing the uniqueness of positive solutions for Fisher-KPP equations).
\end{proof}

\subsection{Spreading conditions}
The reaction term in \eqref{E} is a generic one. It includes the bistable and multistable cases. Therefore, without further conditions, vanishing phenomena (that is, $u\to 0$) and propagating terrace may occur.
We now discuss sufficient conditions on $f$ and $u_0$ which ensure the spreading phenomena.

Let $g_0(u)$ be the function in \eqref{F}. We consider the following homogeneous equation.
\begin{equation}\label{homo-PME-eq}
u_t = [A(u)]_{xx} + g_0(u).
\end{equation}
If there exist $c>0$ and a function $\varphi(z)\in C^2(J)$ such that $\varphi (z)>0$ in $J$, $\varphi(z)=0$ in $\R\backslash J$, and
\begin{equation}\label{TW-q}
[A(\varphi)]'' + c\varphi' + g_0(\varphi)=0,\qquad z\in J,
\end{equation}
then we call $u=\varphi(x-ct)$ a traveling wave of the equation \eqref{homo-PME-eq}.
Note that it is only a traveling-wave-type solution of the equation \eqref{homo-PME-eq} in $Q':=\{(x,t) \mid x-ct \in J\}$, but maybe not a solution of the Cauchy problem of \eqref{homo-PME-eq} since it does not necessarily satisfy the Darcy's law on the boundaries of $Q'$.

We consider \eqref{TW-q} on the $[\Psi(u)][\Psi(u)]'$-phase plane.
By the phase plane analysis as in \cite{A1, AW1, LiLou} we see that,
under the assumption
\begin{equation}\label{g0-ass}
\int_u^{\kappa_0}A'(r)g_0(r)dr>0,\qquad u\in [0,\kappa_0),
\end{equation}
the equation \eqref{TW-q} with $c=0$ has two trajectories which connect the singular point $(\Psi(\kappa_0),0)$ to the positive and negative $[\Psi(u)]'$-axis, respectively. By the continuous dependence of the solutions of \eqref{TW-q} on $c$ and its initial value, we can show the following result.
\begin{prop}\label{prop:compact-TW}
Assume \eqref{g0-ass}. Then for any given small $c\geq 0$ and any $d\in (0,\kappa_0)$ sufficiently close to $\kappa_0$, there exists $l_*(c,d)>0$ such that, for any $l\geq l_*$, the equation \eqref{TW-q} has a solution $\varphi(z;c)$ which satisfies
\begin{equation}\label{prop-q}
\varphi(z;c)>0 \mbox{ in } (-l,l),\quad \varphi(\pm l;c)=0,\quad \max\limits_{z\in [-l,l]}\varphi(z;c)\in (d,\kappa_0),
\end{equation}
Moreover, when $c\geq 0$ is small, there holds
\begin{equation}\label{homo-sub-TW}
c< - [\Psi(\varphi)]_z (l-0).
\end{equation}
\end{prop}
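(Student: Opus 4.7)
The plan is a phase-plane analysis of \eqref{TW-q}, viewing small $c>0$ as a perturbation of the conservative case $c=0$. Set $\omega:=[A(\varphi)]'$; then \eqref{TW-q} becomes the planar system $\varphi'=\omega/A'(\varphi)$, $\omega'=-c\varphi'-g_0(\varphi)$, and a direct computation shows that the energy $E:=\tfrac12\omega^2+G(\varphi)$ with $G(\varphi):=\int_0^\varphi A'(r)g_0(r)\,dr$ obeys $E'=-cA'(\varphi)(\varphi')^2$. So $E$ is a first integral when $c=0$ and is strictly decreasing along non-stationary orbits when $c>0$. Hypothesis \eqref{g0-ass} gives $G(0)=0$, $G(u)<G(\kappa_0)$ on $[0,\kappa_0)$, and $g_0(\kappa_0)=0$ yields $G'(\kappa_0)=0$. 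In the stable regime $g_0'(\kappa_0)<0$ the point $(\kappa_0,0)$ is a saddle of the $c=0$ system, and each level set $\{E=G(M)\}$ for $M\in(0,\kappa_0)$ with $G(M)>0$ is a closed curve through $(M,0)$ and $(0,\pm\sqrt{2G(M)})$, lying strictly inside the separatrix at energy $G(\kappa_0)$.

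For $c=0$, each such level curve is traced out by an even bump $\varphi_M(z)$ with $\varphi_M(0)=M$ and zeros at $z=\pm l(M)$, where
\[
l(M)=\int_0^M\frac{A'(u)}{\sqrt{2\bigl(G(M)-G(u)\bigr)}}\,du.
\]
The integrand is integrable at both endpoints ($A'(0)=0$, and near $u=M$ the denominator is a standard square-root singularity since $g_0(M)>0$ for $M$ near $\kappa_0$). From the expansion $G(M)-G(u)\asymp(\kappa_0-M)(M-u)$ for $u\leq M$ both near $\kappa_0$ (which uses $G''(\kappa_0)=A'(\kappa_0)g_0'(\kappa_0)<0$) one obtains $l(M)\to\infty$ as $M\to\kappa_0^-$: the usual transit-time blow-up at a saddle. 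Continuity then ensures that the image $l\bigl((d,\kappa_0)\bigr)$ contains a half-line $[l_*^{(0)}(d),\infty)$, so for each $l\geq l_*^{(0)}(d)$ we can solve $l(M)=l$ for some $M(l)\in(d,\kappa_0)$; the corresponding $\varphi_{M(l)}$ is the claimed solution at $c=0$.

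The extension to small $c>0$ is by shooting. For $Q>0$, let $\varphi(\cdot;Q,c)$ solve \eqref{TW-q} from $\varphi(-l)=0,\ \omega(-l)=Q$, and denote by $L(Q,c),\ M(Q,c)$ the first subsequent zero of $\varphi$ and its interior maximum. Continuous dependence of ODEs on parameters makes $L,M$ continuous in $(Q,c)$, with $L(Q,0)\to\infty$ and $M(Q,0)\to\kappa_0$ as $Q\to Q^{*-}:=\sqrt{2G(\kappa_0)}^{\;-}$. Thus, for each $l\geq l_*^{(0)}(d)$, an intermediate-value argument applied to $Q\mapsto L(Q,c)-2l$ yields $Q(c,l)$ with $L(Q(c,l),c)=2l$; by continuity (and by the dissipation inequality, which forces $M(Q,c)\leq M(Q,0)$), the associated maximum stays close to $M(l)\in(d,\kappa_0)$ for $c$ small. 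Enlarging $l_*(c,d)$ slightly above $l_*^{(0)}(d)$ if needed, one obtains $M(Q(c,l),c)\in(d,\kappa_0)$ for all $l\geq l_*(c,d)$. Translating the orbit to $[-l,l]$ yields the claimed $\varphi(\cdot;c)$.

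Finally, \eqref{homo-sub-TW} is immediate from the degeneracy of $A$ at $0$. The energy at the right endpoint, $E_l:=\tfrac12\omega(l-0)^2=\tfrac12 Q(c,l)^2-c\int_{-l}^{l}A'(\varphi)(\varphi')^2\,dz$, is positive for $c$ small (only a small dissipation of the positive initial energy $\approx G(M(l))$), so $\omega(l-0)<0$ is a finite nonzero number. Since $[\Psi(\varphi)]_z=(A'(\varphi)/\varphi)\varphi'=\omega/\varphi$ and $\varphi(z)\to 0^+$ as $z\to l^-$, we get $[\Psi(\varphi)]_z(l-0)=-\infty$, whence $-[\Psi(\varphi)]_z(l-0)=+\infty>c$, which is a much stronger form of \eqref{homo-sub-TW}. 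The main technical obstacle in the plan is making the $c>0$ shooting step uniform as $l\to\infty$: the relevant orbit then crowds the separatrix at $(\kappa_0,0)$, where the linearization becomes singular in $c$ and the constants from continuous dependence are not obviously uniform; one handles this either by allowing $l_*(c,d)$ to depend on $c$ (as the statement permits) or by an a priori bound on $\varphi'$ extracted from the dissipation identity $E'=-cA'(\varphi)(\varphi')^2$ itself.
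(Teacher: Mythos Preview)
Your argument is correct and follows the same overall strategy as the paper---phase-plane analysis for $c=0$ followed by continuous dependence for small $c>0$---but the paper's own ``proof'' is only the one-sentence sketch preceding the proposition, carried out in the $(\Psi(u),[\Psi(u)]')$ plane and deferring all details to \cite{A1,AW1,LiLou}. Your choice of coordinates $(\varphi,\omega)=(\varphi,[A(\varphi)]')$ is different and arguably cleaner: it yields the explicit first integral $E=\tfrac12\omega^2+G(\varphi)$ and the half-period formula, from which the blow-up $l(M)\to\infty$ is transparent. Your observation that \eqref{homo-sub-TW} is automatic because $\omega(l-0)=-\sqrt{2E_l}\neq 0$ while $\varphi(l-0)=0$, hence $[\Psi(\varphi)]_z(l-0)=\omega/\varphi\to-\infty$, is correct and is not made explicit in the paper; it explains why the inequality requires no separate work. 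Two minor points: you invoke $g_0'(\kappa_0)<0$ for the saddle picture and the expansion $G(M)-G(u)\asymp(\kappa_0-M)(M-u)$, but this hypothesis is not in \eqref{F}; the conclusion $l(M)\to\infty$ needs only $g_0(\kappa_0)=0$ (the integrand near $u=M$ scales like $g_0(M)^{-1/2}$), so your argument survives with a one-line adjustment. And your closing worry about uniformity in $l$ is moot, since---as you note---the statement allows $l_*$ to depend on $c$.
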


\begin{remark}\label{rem:easy-spread}
\rm
The inequality \eqref{homo-sub-TW} implies that $\varphi(x-ct;c)$ is indeed a rightward traveling-wave-type subsolution of the Cauchy problem of \eqref{homo-PME-eq}. Similarly, leftward traveling-wave-type subsolution $\tilde{\varphi}(x+ct;c)$ can be find.
\end{remark}

Now we give a sufficient condition for the spreading phenomena.

\begin{thm}\label{thm:spreading}
Assume \eqref{ass-A} and \eqref{F}. Let $u(x,t;u_0)$ be the solution of \eqref{E} with initial data $u_0\in \X$. If there exists a solution $\varphi(z;c)$ of \eqref{TW-q} satisfying \eqref{prop-q} and \eqref{homo-sub-TW}, and
\begin{equation}\label{cond-ini}
u_0(x) \geq \varphi(x;c),\qquad x\in [-l,0],
\end{equation}
then spreading happens for $u(x,t;u_0)$.
\end{thm}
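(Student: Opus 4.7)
The plan is to deploy the compactly supported traveling-wave subsolutions $\varphi(x-ct;c)$ and its leftward companion $\tilde\varphi(x+ct;c)$, provided by Proposition~\ref{prop:compact-TW} and Remark~\ref{rem:easy-spread}, as two moving lower barriers that drive the support of $u$ across all of $\R$, and then to upgrade the resulting transient bound into convergence to a stationary solution in $\calS$ via the minimal-attractor construction of Lemma~\ref{lem:ppss}. The key input from \eqref{F} is the pointwise bound $f(x,u)[\kappa(x)-u]\geq g_0(u)$, which lifts any subsolution of the homogeneous equation \eqref{homo-PME-eq} to a subsolution of \eqref{E}; the key input from Proposition~\ref{prop:compact-TW} is the strict Darcy inequality \eqref{homo-sub-TW}, which is exactly what is needed to guarantee that the compactly supported profile $\varphi(\,\cdot\,-ct;c)$ remains a weak subsolution of \eqref{E} across its own free boundary in the porous-medium sense.

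With this subsolution status in hand, I would invoke the weak comparison principle for \eqref{E}. From the hypothesis $u_0\geq\varphi(\,\cdot\,;c)$ and the natural extension by zero outside $[-l,l]$, I obtain
\begin{equation*}
u(x,t;u_0)\geq \varphi(x-ct;c)\quad\text{on } \{x-ct\in[-l,l]\},
\end{equation*}
and the symmetric estimate with $\tilde\varphi(x+ct;c)$. Consequently the right and left free boundaries of $u(\,\cdot\,,t)$ satisfy $r(t)\geq ct+l$ and $l(t)\leq -ct-l$, and whenever the peak of a traveling bump sweeps across a given compact set $K$ it pins $u$ from below by some value $d\in(\theta,\kappa_0)$ on $K$. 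By choosing the parameter $l$ in Proposition~\ref{prop:compact-TW} sufficiently large (equivalently, by iterating the construction along a sequence of start times), I can arrange that at some time $T_N$
\begin{equation*}
u(x,T_N)\geq \tfrac{\kappa_0+\theta}{2},\qquad x\in[-N,N],
\end{equation*}
with $N$ arbitrarily large.

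Next, I would compare $u(\,\cdot\,,t+T_N)$ on $[-N,N]$ with the solution of \eqref{E} starting from $\tfrac{\kappa_0+\theta}{2}\mathbf{1}_{[-N,N]}$ and send $N\to\infty$, which increases the comparison initial datum to the constant $\tfrac{\kappa_0+\theta}{2}$. By the monotone argument of Lemma~\ref{lem:ppss}, the solution issued from this constant increases in $C^2_{loc}(\R)$ to $p_0\in\calS$, so $\liminf_{t\to\infty} u(x,t)\geq p_0(x)$ locally uniformly. A symmetric comparison from above with the solution starting from the constant $\kappa^0+1$ yields $\limsup_{t\to\infty}u(\,\cdot\,,t)\leq p^0$. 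Combined with the interior parabolic regularity of \eqref{E} on $\{u>0\}$ (basic property~(c) of Section~2), this traps every $C^2_{loc}$-accumulation point of $\{u(\,\cdot\,,t)\}_{t\to\infty}$ inside $\calS$, which is precisely the definition of spreading.

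The main obstacle I expect is the very first step: the rigorous verification that $\varphi(x-ct;c)$ serves as a \emph{weak} subsolution of \eqref{E} across its moving front, not merely as a classical subsolution in the open region $\{\varphi>0\}$. The strict inequality \eqref{homo-sub-TW} between the wave speed and the Darcy speed of the profile is precisely what prevents the front of $u$ from overtaking the front of $\varphi$ and is the analytic reason why Proposition~\ref{prop:compact-TW} takes the trouble to record it. Once the weak-subsolution status is cleanly established, the remaining steps—linear spreading of the support, lifting $u$ above $\theta$ on arbitrarily large intervals, and trapping the $\omega$-limit set in $\calS$—proceed by comparison and monotonicity arguments that are standard in the degenerate-diffusion literature.
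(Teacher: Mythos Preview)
Your approach differs from the paper's and has a genuine gap at the step where you claim $u(x,T_N)\geq\tfrac{\kappa_0+\theta}{2}$ on $[-N,N]$. A single rightward bump $\varphi(x-ct;c)$ only controls $u$ on the moving window $[ct-l,ct+l]$; once the bump has passed a point $x_0$, you know $u(x_0,\cdot)$ was at least $d$ at the crossing time, but positivity persistence (basic property (c)) gives no uniform lower bound afterwards. The leftward companion $\tilde\varphi$ does not help either: the hypothesis \eqref{cond-ini} does not furnish $u_0\geq\tilde\varphi(\cdot;c)$, and even if it did, the two bumps separate and leave the central region $(-ct+l,\,ct-l)$ uncontrolled at any single time. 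Your proposed ``iteration along start times'' would require placing a fresh bump under $u(\cdot,T)$ at a location \emph{behind} the original one, but all you know is $u(\cdot,T)\geq\varphi(\cdot-cT;c)$, which is supported far to the right; there is no mechanism to dominate a bump profile in the wake.

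The paper circumvents this by routing the comparison through the \emph{homogeneous} equation \eqref{homo-PME-eq}: since $f(x,u)[\kappa(x)-u]\geq g_0(u)$, the solution $\tilde u$ of \eqref{homo-PME-eq} with the same initial datum satisfies $u\geq\tilde u\geq\varphi(x-ct;c)$, and the general convergence theorem of \cite{LouZhou} for the homogeneous problem forces $\tilde u(\cdot,t)$ to converge to a stationary solution $\tilde w$ that is non-increasing to the right of the initial support. The bump pins $\tilde w\geq d$ for $x\gg b$, and this monotonicity then propagates the bound backwards, forcing $\tilde w$ to be a zero of $g_0$ not less than $d$ on all of $\R$; hence $\liminf_{t\to\infty}u\geq d$ everywhere. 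That monotonicity of the homogeneous limit is precisely the structural ingredient your direct bump argument is missing. Finally, note that both routes still need the \cite{LouZhou}-type convergence theorem (now for \eqref{E}) to upgrade the lower bound to actual convergence $u(\cdot,t)\to w$; your sandwich conclusion ``every accumulation point lies in $\calS$'' is weaker than the definition of spreading, which requires convergence to a single $p\in\calS$.
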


\begin{proof}
Denote the solution of the equation \eqref{homo-PME-eq} with initial data $u_0$ by $\tilde{u}(x,t;u_0)$. Then, by comparison we have
\begin{equation}\label{u>tilde-u}
u(x,t;u_0)\geq \tilde{u}(x,t;u_0)\geq \varphi(x-ct;c),\qquad x\in \R,\ t\geq 0.
\end{equation}

Now we recall some results in \cite{LouZhou}, where Lou and Zhou studied the Cauchy problem of a PME with a reaction:
\begin{equation}\label{RPME-LZ}
\left\{
\begin{array}{ll}
\hat{u}_t = (\hat{u}^m)_{xx} + g_0(\hat{u}),& x\in \R,\ t>0,\\
\hat{u}(x,0)=u_0(x)\in \X, & x\in \R,
\end{array}
\right.
\end{equation}
for some $m>1$. In \cite[\S 3.2]{LouZhou}, the authors proved that the solution $\hat{u}(x,t)$, with left free boundary $\hat{l}(t)$ and right free boundary $\hat{r}(t)$, is strictly increasing in $(\hat{l}(t),-b)$ and strictly decreasing in $(l,\hat{r}(t))$.
In addition, they proved a general convergence result: $\hat{u}(x,t)$ converges as $t\to \infty$ to a nonnegative stationary solution, which is a zero of $g_0(u)$, or some ground state solution.
Following a similar argument in that paper, it is not difficult to show that the results remain true for our equation \eqref{homo-PME-eq} with a generic degenerate diffusion term $[A(u)]_{xx}$.
In particular, $\tilde{u}(x,t)$ converges as $t\to \infty$ to some stationary solution $\tilde{w}(x)$ of \eqref{homo-PME-eq}, and $\tilde{w}(x)$ is non-increasing on the right side of the initial support $[-b,b]$.

Combining these properties with the second inequality in \eqref{u>tilde-u} we conclude that $\tilde{w}(x)\geq d=\max\varphi(z;c,d)$ for $x\gg b$. Thus, $\tilde{w}(x)$ must be a positive zero of $g_0(u)$ not less than $d$.
By \eqref{u>tilde-u} again  we have
$$
\liminf\limits_{t\to \infty} u(x,t)\geq d,\qquad x\in \R.
$$
Note that the convergence of $u(x,t)$ to a stationary solution $w(x)$ of \eqref{E} can also be proved as in \cite[Theorem 1.1]{LouZhou}. Thus $w(x)\geq d$. Since $d$ can be chosen close to $\kappa_0$, by the assumptions in \eqref{F} we finally conclude that the limit $w(x)$ of $u(x,t)$ is an element in $\mathcal{S}$. This proves that spreading happens for $u$.
\end{proof}

%%%%%%%%%%%%%%%%%%%%%%%%%%%%%%%%%%%%
%%%%%%%%%%%%%%%%%%%%%%%%%%%%%%%%%%%%
%%%%%%%%%%%%%%%%%%%%%%%%%%%%%%%%%%%%
%%%%%%%%%%%%%%%%%%%%%%%%%%%%%%%%%%%%
%%%%%%%%%%%%%%%%%%%%%%%%%%%%%%%%%%%%

\section{Construction of the Periodic Traveling Sharp Wave}
In this section, we assume \eqref{ass-A} and \eqref{F}, and construct the periodic traveling sharp wave by using the renormalization method. 
For simplicity of notation, in this section
\begin{center}
{\it we temporarily regard the period $L$ of $f(\cdot,u),\ \kappa(x)$ and $h(\cdot,v)$ as $1$.}
\end{center}
Of course, all the discussion holds true for general period $L$.

\subsection{Intersection number}
The so-called zero number argument (cf. \cite{Ang}) developed in 1980s is a powerful tool in the study of asymptotic behavior of solutions of one-dimensional parabolic equations.
In this paper we will use the corresponding version for degenerate equations.
Hereinafter, we will use $v,\ v^{(i)},\ v_i,\ V,\ \cdots$ to
denote the corresponding pressure function of $u,\ u^{(i)},\ u_i,\ U,\ \cdots$.
We recall that if $u$ solves the equation in \eqref{E}, then $v$ solves
\begin{equation}\label{v-PME}
v_t = B(v) v_{xx} + v^2_x + h(x,v), \qquad  x\in \R,\ t>0.
\end{equation}

We first specify some special relationship between two solutions. For $i=1,2$, assume $l^{(i)}(t) <r^{(i)}(t)$ are continuous functions for $t\in [t_1, t_2)$,
$$
Q^{(i)}:= \{(x,t)\mid l^{(i)}(t)<x<r^{(i)}(t),\ t_1<t<t_2\},
$$
$v^{(i)}(x,t)\in C(\R\times [t_1, t_2)) \cap C^{2,1}(Q^{(i)})$ is a solution of \eqref{v-PME}. Denote
$$
l(t) := \max\{l^{(1)}(t), l^{(2)}(t)\},\qquad r(t):=\min\{r^{(1)}(t), r^{(2)}(t)\},\qquad t\in [t_1,t_2).
$$
When $l(t)< r(t)$ we use the following notations.

\medskip
\noindent
\underline{\it Case 1}. %$v^{(1)}(x,t_0)$ is {\it steeper} than $v^{(2)}(x,t_0)$ for some $t_0 \in [t_1,t_2)$, d
Denote $v^{(1)}(\cdot,t_0) \vartriangleright v^{(2)}(\cdot,t_0)$, if there exists $x_0\in (l(t_0), r(t_0))$ such that
$$
v^{(1)}(x,t_0)> v^{(2)}(x,t_0) \mbox{ for } l(t_0)\leq x<x_0,\qquad v^{(1)}(x,t_0)<v^{(2)}(x,t_0) \mbox{ for } x_0 < x \leq r(t_0).
$$
(Note that this notation is a little different from the concept {\it $v^{(1)}$ is steeper than $v^{(2)}$} in \cite{DGM}. The later will be used as it was in subsection 3.3.)

\medskip
\noindent
\underline{\it Case 2}. %$v^{(1)}(x,t_0)$ is {\it exactly greater} than $v^{(2)}(x,t_0)$,
Denote $v^{(1)}(\cdot,t_0) \succapprox v^{(2)}(\cdot,t_0)$, if $l^{(1)}(t_0)\leq l^{(2)}(t_0) < r^{(2)}(t_0) =r^{(1)}(t_0)$, or, $l^{(1)}(t_0)=l^{(2)}(t_0) < r^{(2)}(t_0) \leq r^{(1)}(t_0)$, and
$$
v^{(1)}(x,t_0)> v^{(2)}(x,t_0) \mbox{ for } l(t) < x <  r(t).
$$

\medskip
\noindent
\underline{\it Case 3}. %$v^{(1)}(x,t_0)$ is {\it greater} than $v^{(2)}(x,t_0)$,
Denote $v^{(1)}(\cdot,t_0) \succ v^{(2)}(\cdot,t_0)$, if $l^{(1)}(t_0)<l^{(2)}(t_0) < r^{(2)}(t_0) <r^{(1)}(t_0)$, and
$$
v^{(1)}(x,t_0) > v^{(2)}(x,t_0) \mbox{ for } l(t) < x < r(t).
$$

\medskip
In this section we actually consider such relationships among three solutions. More precisely, let $v_i(x,t)\in C(\R\times [t_1, t_2)) \cap C^{2,1}(Q_i)$\ $(i=1,2,3)$ be three solutions of the Cauchy problem of \eqref{v-PME}, where
$$
Q_i:= \{(x,t)\mid l_i(t)<x<r_i(t),\ t_1<t<t_2\},
$$
and $l_i(t)<r_i(t)$ are free boundaries of $v_i$.
Assume further that $l_1(t) \equiv l_2(t)\equiv -\infty$, other free boundaries are bounded functions.
The original zero number diminishing properties (cf. \cite{Ang}) say that, roughly, the zero number of a solution of a {\it one-dimensional linear uniformly} parabolic equation is decreasing. Correspondingly, the {\it intersection number} of two solutions of a nonlinear uniformly parabolic equation is also decreasing (since the difference function of these two solutions solves a linear equation). In \cite{LouZhou} the authors extended this property from uniformly parabolic equations to PMEs. Their results actually hold for degenerate equations like \eqref{v-PME}.
As a consequence we have the following result (see details in \cite{LouZhou}).

\begin{prop}\label{prop:inter}
Let $v_i$ be the solutions of \eqref{v-PME} given in the above paragraph.
\begin{enumerate}[{\rm (i)}]
\item If $v_1(\cdot ,t_1)\vartriangleright v_2(\cdot,t_1)$, then there exist $s_1, s_2$ with $t_1<s_1\leq s_2 \leq t_2$ such that $v_1(\cdot,t)\vartriangleright v_2(\cdot,t)$ for $t\in [t_1, s_1)$, $v_1(\cdot,t) \succapprox v_2(\cdot,t)$ for $t\in [s_1,s_2]$, and $v_1(\cdot,t)\succ v_2(\cdot,t)$ for $t\in (s_2, t_2)$.

\item If $r_1(t_1) = l_3(t_1)$, then either $r_1(t)=l_3(t)$ for all $t\in [t_1,t_2)$, or, there exist $s_3, s_4, s_5$ with $t_1\leq s_3 < s_4 \leq s_5 \leq t_2$ such that $r_1(t)=l_3(t)$ for $t\in [t_1,s_3]$, $v_1(\cdot,t)\vartriangleright v_3(\cdot,t)$ for $t\in (s_3, s_4)$, $v_1(\cdot,t) \succapprox v_3(\cdot,t)$ for $t\in [s_4,s_5]$, and $v_1(\cdot,t)\succ v_3(\cdot,t)$ for $t\in (s_5, t_2)$.
\end{enumerate}
\end{prop}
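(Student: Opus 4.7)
The plan is to adapt the Angenent-type zero-number diminishing theory to the degenerate pressure equation \eqref{v-PME}, along the lines established in \cite{LouZhou}. The key point is that, in a region where both $v_i$ are positive, the difference of two solutions satisfies a linear (non-uniformly) parabolic equation, and the number of sign changes of this difference in $x$ is monotone non-increasing in $t$. All the transitions described in the statement then come from tracking where the (unique) sign change goes.

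For part (i), set $D(t) := (-\infty, \min\{r_1(t), r_2(t)\})$ and $w(x,t) := v_1(x,t) - v_2(x,t)$ on $D(t)$. Subtracting the two copies of \eqref{v-PME} yields a linear parabolic equation $w_t = B(v_1) w_{xx} + a(x,t) w_x + b(x,t) w$, whose coefficients are smooth and bounded on compact subsets of $D(t)$. The hypothesis $v_1(\cdot,t_1)\vartriangleright v_2(\cdot,t_1)$ says $w(\cdot,t_1)$ has exactly one sign change, from $+$ on the left to $-$ on the right. Standard zero-number arguments then produce a continuous curve $x_0(t)$ of sign changes, and show that no new intersections can be born from $-\infty$ (where $w>0$ is persistent) since the total count cannot increase. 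The sign $(+,-)$ near the right endpoint moreover forces $r_1(t) \leq r_2(t)$ throughout the $\vartriangleright$ phase, since at $x=r_1$ one would otherwise have $0=v_1<v_2$, contradicting $v_1>v_2$ just to the right when $r_1>r_2$.

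The transition times then arise geometrically. Let $s_1\in(t_1,t_2]$ be the first time $x_0(t)$ reaches the right endpoint $r_1(t)$, with $s_1=s_2=t_2$ if no such time exists. At $t=s_1$ the unique sign change exits through the boundary, and for $t$ just past $s_1$ we have $w>0$ throughout $D(t)$ with $r_1=r_2$, i.e. $v_1\succapprox v_2$. Let $s_2\in[s_1,t_2]$ be the first time after $s_1$ at which $r_1(t)>r_2(t)$ strictly. During $[s_1,s_2]$ the two right boundaries coincide while $v_1>v_2$ in the interior; a Hopf-type argument applied at the common boundary (combined with Darcy's law \eqref{general-sol-Darcy} relating $r_i'(t)$ to $-v_{i,x}(r_i-0,t)$) shows that once strict separation occurs it persists, giving the declared $\succ$ phase on $(s_2,t_2)$.

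Part (ii) is analogous but the configuration starts with touching supports, $r_1(t_1)=l_3(t_1)$. Either the supports remain tangent for all $t$, giving $r_1(t)\equiv l_3(t)$, or they begin to overlap at some first time $s_3\ge t_1$. Once overlap occurs, within the overlap region $v_1>0=v_3$ near $l_3(t)$ while $v_3>0=v_1$ near $r_1(t)$, so $v_1\vartriangleright v_3$ there, and part (i) applied to this overlap region produces the desired $s_4,s_5$. The main obstacle throughout is the loss of uniform parabolicity at free boundaries: $B(v_1)$ vanishes where $v_1=0$, so Angenent's theorem does not apply on the full interval up to the boundary. This is handled, as in \cite{LouZhou}, by working on slightly shrunken sub-intervals $(-\infty, r_1(t)-\varepsilon)$, running the zero-number argument there, and then passing $\varepsilon\to 0$ using the Darcy law and the $C^{2+\alpha,1+\alpha/2}$ boundary regularity of $v_i$ to guarantee that no intersections are lost or created in the limit.
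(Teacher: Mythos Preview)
The paper does not give a self-contained proof of this proposition; it states the result as a consequence of the intersection-number theory developed in \cite{LouZhou} for degenerate equations and simply refers the reader there for details. Your outline follows exactly that route --- Angenent's zero-number argument applied to the difference $w=v_1-v_2$ in the interior, with the degeneracy at the free boundary handled by shrinking the domain and passing to the limit --- so the approaches coincide.

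One point to flag: you invoke ``a Hopf-type argument applied at the common boundary'' to control the transition out of the $\succapprox$ phase. The paper explicitly remarks (immediately after the proposition) that there is \emph{no} Hopf boundary lemma available for the degenerate equation \eqref{v-PME}; this is precisely why the $\succapprox$ phase may occupy a nontrivial interval $[s_1,s_2]$ with $s_1<s_2$, and why the proposition allows $s_2=t_2$. The persistence of $\succ$ once attained follows from the ordinary comparison principle, and the impossibility of regressing from $\succapprox$ back to $r_1<r_2$ follows directly from Darcy's law: if $r_1(t)=r_2(t)$ and $v_1>v_2$ in the interior, then $v_{1,x}(r_1-0,t)\le v_{2,x}(r_2-0,t)$, hence $r_1'(t)\ge r_2'(t)$. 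So your conclusion stands, but the justification should not lean on Hopf.
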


\noindent
In (i), the relationship $v_1(\cdot,t) \succapprox v_2(\cdot,t)$ for $t\in [s_1,s_2]$ might be true for $s_1 <s_2$, since we have now Hopf boundary lemma for degenerate equation \eqref{v-PME}. In (ii), $r_1(t)=l_3(t)$ might be true in a period after $t_1$, since the waiting times of $r_1(t)$ and $l_3(t)$ at $x=r_1 (t_1)$ may be positive.

\subsection{Cauchy problems}
Let $p(x)$ be a positive stationary solution in $\mathcal{S}$. By Proposition \ref{prop:compact-TW}, there exists $\varphi(z;c)$ satisfying \eqref{prop-q} and \eqref{homo-sub-TW} such that  $\varphi(x-c t;c)$ is a very weak subsolution of \eqref{E}, and, for any $x_0\in \R$, there holds,
\begin{equation}\label{p2>varphi}
p(x) > \varphi (x-c t+x_0;c),\qquad x,\ t\in \R.
\end{equation}
When we use the pressure functions, $p(x)$ corresponds to a positive stationary solution (denote it by $q(x)$) of \eqref{v-PME}, and $\varphi(x-c t)$ corresponds to a traveling-wave-type subsolution $\phi (x-c t)$ of \eqref{v-PME}:
$$
q (x) := \Psi(p(x)) \mbox{ for }  x\in \R,\qquad
\phi (z) := \Psi(\varphi(z)) \mbox{ for }z\in \R.
$$
$q (x)$ is $1$-periodic since we temporarily regard the spatial periodic as $1$ in this section. Denote by $H(x)$ the Heaviside function:
$$
H(x)=1\mbox{ for }x\geq 0,\qquad H(x)=0\mbox{ for } x< 0.
$$
For each integer $k$, we consider the Cauchy problem
\begin{equation}\label{uk-p-v}
\left\{
\begin{array}{ll}
v_t = B(v)v_{xx} + v_x^2 + g(x,v), & x\in \R,\ t>0,\\
v(x,0)= v_{0k}(x):= H(k-x)q^0(x), & x\in \R.
\end{array}
\right.
\end{equation}
As the Cauchy problem of \eqref{E} with initial data in $\X$, it follows from \cite{LouZhou, Sacks, Vaz-book, WuYin-book} etc. that the problem \eqref{uk-p-v} has a unique global solution, denoted by $v(x,t;k)$, which has a unique right free boundary, denoted by $r(t;k)$. In addition,
$$
v(x,t;k)>0 \mbox{ in } Q_k := \{(x,t)\mid x<r(t;k),\ t> 0\},\qquad
v(x,t;k)=0 \mbox{ in } (\R\times (0,\infty))\backslash Q_k,
$$
$v\in C(\R\times (0,\infty))\cap C^{2+\alpha,1+\alpha/2}(Q_k)$ for any $\alpha \in (0,1)$, and the Darcy's law holds:
\begin{equation}\label{bn-Darcy}
r'(t;k)= - v_x (r(t;k)-0,t;k)\geq 0,\qquad t>0.
\end{equation}
(In the next subsection we will show that $r'(t;k)$ has a positive lower bound. At present, however, we only know that $r(t;k)$ is strictly increasing in $t$. Such a conclusion for PMEs with reactions was proved in \cite[Theorem 2.7]{LouZhou}, it remains true for degenerate diffusion equations like \eqref{v-PME}.) Clearly, for any integers $k$ and $j$, there holds
\begin{equation}\label{k=j}
v(x,t;k)\equiv v(x-k+j,t;j),\qquad r(t;k)\equiv r(t;j) + k-j.
\end{equation}
Since $\phi (x-c t -k)$ is a subsolution of \eqref{uk-p-v} we have
\begin{equation}\label{varphi<vn}
\left\{
\begin{array}{l}
\phi(x-c t -k) < v(x,t;k) \mbox{ for }x\leq c t +k,\ t>0,\\
c t + k  < r(t;k) \mbox{ for }t>0.
\end{array}
\right.
\end{equation}
This implies that $r(t;k)\to \infty$ as $t\to \infty$.

\subsection{A priori estimates}\label{subsec:a-priori}

For any positive integer $n$, denote by $t_n$ the time when $v(n,t_n;0)=0$, or, equivalently, $r(t_n;0)=n$, and set
\begin{equation}\label{def:v-n}
v_n(x,t):= v(x+n,t+t_n;0),\quad r_n(t):= r(t+t_n;0)-n,\qquad  x\in \R,\ t>-t_n.
\end{equation}
Then $v_n(0,0)=0$, and
\begin{equation}\label{Darcy-bn}
r'_n(t) = -v_{nx}(r_n(t)-0,t),\qquad t>-t_n.
\end{equation}
Our purpose is to show the following convergence results:
\begin{equation}\label{conv-bn-B}
r_n(t)\to R(t) \mbox{ as }n\to \infty, \qquad \mbox{ in the topology of } C_{loc}(\R),
\end{equation}
and
\begin{equation}\label{conv-vn-V}
v_n(x,t)\to V(x,t) \mbox{ as }n\to \infty,\qquad \mbox{in the topology of } C_{loc}(\R^2) \cap C^{2+\alpha,1+\alpha/2}_{loc} (Q),
\end{equation}
where $Q:=\{(x,t)\mid x<R(t),\ t\in \R\}$ is the domain in which $V$ is positive.
Then $V(x,t)$ with free boundary $R(t)$ will be the desired periodic traveling sharp wave.
In order to show the convergence in \eqref{conv-bn-B} and \eqref{conv-vn-V}, we need some uniform-in-time a priori estimates. Due to the degeneracy of the equation, these estimates are complicated. We will specify them in several steps.

\begin{remark}\label{rem:renormalize-place}
\rm
One may try to renormalize $u(x,t)$ by considering the $d$-level set (as it was done in \cite{DGM} for RDEs): for some $d>0$, set $s_n$ the time when $u(n,s_n)=d$, and consider the renormalized sequence $\{u(x+n,t+s_n)\}$. If this sequence converges to some entire solution $\tilde{U}(x,t)$, then it might be the desired periodic traveling wave. Of course this limit is non-trivial, and the uniform estimates for the normalized solutions are simpler than that for $v_n$, since in this case, one only needs to focus on the domain where the renormalized sequence is positive and classical, and so the corresponding equations are non-degenerate. However, there is {\it a big problem left} in such a process, that is, we do not know whether $\tilde{U}(x,t)$ is a sharp wave or it is positive on the whole line $\R$. In the latter case the traveling wave is not the sharp one with minimal speed (see, for example, the PME with monostable reactions in \cite{A1}). Our renormalization process, however, can indeed leads to a sharp wave, although the subsequent uniform estimates near the free boundary are very complicated. (c.f. Remark \ref{rem:d-renorm}.)
\end{remark}

\medskip
\noindent
\underline{\it Step 1. Upper bound of $r'(t;0)$}. For any given $y>0$, define $t_y$ by $r(t_y;0)=y$. Then
$$
r(t_y;0)= y = r(t_{y+1};0) - 1 = r(t_{y+1};-1).
$$
Set $s_y := t_{y+1}-t_y$, we now show

\medskip
\noindent
{\bf Claim}: $r(s_y; -1)>0$.
\medskip

\noindent
In fact, if $r(s_y; -1)\leq 0$, then $v(x,s_y;-1)< v(x,0;0)$, and so by comparison we have $v(x,t;0)\succ v(x,t+s_y; -1)\equiv v(x+1,t+s_y; 0)$ for all $t>0$.
In particular, they have no free boundaries for all $t>0$, this, however, is a contradiction at $t=t_y$. Using this claim and using Proposition  \ref{prop:inter} we can conclude that
\begin{equation}\label{v-sharper-vt+s}
v(x,t;0) \vartriangleright v(x,t+s_y;-1) \mbox{\ \ for\ } 0<t< t_y,
\end{equation}
and
\begin{equation}\label{exact-greater}
v(x,t_y;0) \succapprox v(x,t_{y+1};-1)\equiv v(x+1,t_{y+1};0).
\end{equation}
So,
\begin{equation}\label{speed-comp}
r'(t_y;0)=-v_x (y-0,t_y;0) \geq -v_x((y+1)-0,t_{y+L};0)=r'(t_{y+1};0).
\end{equation}
(The strict inequality may not hold since we have no Hopf boundary lemma for the equation \eqref{v-PME}.)
This means that $r$ moves slower at $y+1$ than it does at $y$. Hence,
\begin{equation}\label{def-sn}
s_{n} := t_{n+1}-t_{n} \mbox{ is increasing in }n,
\end{equation}
and $t_n\to \infty$ as $n\to \infty$. For any $y>0$, denote
by $[y]$ the maximum integer not bigger than $y$, and
$\langle y \rangle := y -[y]$. When $[y]> 2$, by \eqref{speed-comp} we have
\begin{eqnarray*}
0 & \leq &  r'(t_y;0) \leq r'(t_{y-1};0)\leq \cdots \leq r'(t_{\langle y \rangle +1};0)  \\
& \leq & \bar{c} :=  \max\limits_{z\in [1,2]}  r'(t_z ;0) = \max\limits_{z\in [1,2]} \big[ -v_x(z-0,t_{z};0)\big].
\end{eqnarray*}
For any given $T_1 >0$, there holds $t_n - T_1 > t_2$ when $n$ is sufficiently large. Thus, for each large $n$ and each $t\in [-T_1,T_1]$, there exists a unique $y>2$ such that $t_y = t+t_n > -T_1 + t_n >t_2$, and so
$$
r'_n(t) = r'(t+t_n ;0) = r'(t_y; 0)\in [0, \bar{c}].
$$
By the Ascoli-Arzela lemma, a subsequence $\{r_{n_i}(t)\}$ of $\{r_n(t)\}$ converges as $i\to \infty$ to
a continuous function $R(t)$ in $C([-T_1,T_1])$. Using the Cantor's diagonal argument, we can find a subsequence of $\{r_{n_i}\}$ and a continuous function in $\R$, denote them again by $\{r_{n_i}\}$ and $R(t)$, such that, $R(0)=0$ and
\begin{equation}\label{Bni-to-B}
r_{n_i}(t) \to R(t) \mbox{ as }i\to \infty,\qquad \mbox{ in the topology of } C_{loc}(\R).
\end{equation}

\medskip
\noindent
\underline{\it Step 2. Lower bound of $v_x$ in the left neighborhood of the free boundary}.
%For any $y>1$ and $s>t_y-t_{y-1}$, we see that $b(t;1)$ reaches $y$ later than $b(t+s;0)$,
%so $v(x,t_{y-1};1)\vartriangleright v(x, t_{y-1} +s;0)$. By Proposition  \ref{prop:inter} we must have
%$$
%v(x-1,t;0) \equiv v(x,t;1) \vartriangleright  v(x,t+s;0), \qquad 0<t\leq t_{y-1}.
%$$
%In particular, when we choose $s= t_y -t$, we have
%\begin{equation}\label{steeper-1}
% v(x-1,t;0) \equiv v(x,t;1) \vartriangleright  v(x,t_y;0)\quad \mbox{\ for } 0<t< t_{y-1},
%\end{equation}
%and
%$$
%v(x-1,t;0) \equiv v(x,t;1) \succapprox v(x,t_y;0)\quad \mbox{\ for } t= t_{y-1}.
%$$
For any given $y>3$, write $j :=[y-2]$. When $s> t_y - t_{y-j}$, as proving the claim in Step 1, one can show that $r(s;0)>j$.  Using this fact, and using a similar argument as proving \eqref{v-sharper-vt+s} and \eqref{exact-greater} we can show that
$$
v(x-j,t;0) \equiv v(x,t;j) \vartriangleright  v(x,t_y;0)\quad \mbox{\ \ for any } 0< t < t_{y-j},
$$
and
\begin{equation}\label{steeper-11}
v(x-j,t_{y-j};0) \equiv v(x,t_{y-j};j) \succapprox v(x,t_y;0).
\end{equation}
This implies that, for any $x'\in [y-1,y)$, there exists $t'\in [t_{y-j-1}, t_{y-j})$ such that the graph of $v(x,t';j)\equiv v(x-j,t';0)$ contacts that of $v(x,t_y;0)$ at exactly one point $(x',v(x',t_y;0))$, and
$$
v_x(x',t';j) \equiv v_x(x'-j,t';0) \leq v_x (x',t_y;0).
$$
Combining it together with $v_x(y-j-0,t_{y-j};0) \leq v_x (y-0,t_y;0)$ (by \eqref{steeper-11}) we have
\begin{equation}\label{gradient-upper}
\min\limits_{x'\in [y-1,y]} v_x (x',t_y;0) \geq \min\limits_{{\tiny
\begin{array}{c}
x'\in [y-1,y]\\
t'\in [t_{y-j-1},t_{y-j}]
\end{array}
}
}
v_x(x'-j,t';0)  \geq -C_1:=  \min\limits_{
{\tiny
\begin{array}{c}
z\in [1,3]\\
t\in [t_1,t_3]
\end{array}
}}
v_x(z,t;0).
\end{equation}
This give the uniform-in-time lower bound for $v_x$ in the interval $[r(t;0)-1, r(t;0)]$.

As a consequence of \eqref{gradient-upper}, we have the following estimate for $v$:
\begin{equation}\label{est-near-bt}
0\leq v(x,t;0)= -\int_{x}^{r(t;0)} v_x(x,t;0)dx \leq C_1 [r(t;0)-x],\qquad r(t;0)-1 \leq x\leq r(t;0),\ t>t_3.
\end{equation}
Note that this estimate holds only in $[r(t;0)-1, r(t;0)]$. We yet do not know the negative upper bound of $v_x$ in this interval and the positive lower bound of $v$ when $x$ is far away from $r(t;0)$, which are important when we show that the limit of $v_n$ is a non-trivial one.

\medskip
\noindent
\underline{\it Step 3. Positive lower bound of the average speed}. From \eqref{speed-comp} we know that, for any positive integer $n$, it takes more time for $r(t;0)$ to cross the interval $[n+1,n+2]$ than that to cross the interval $[n,n+1]$:
\begin{equation}\label{time-comp-n}
s_n :=  t_{n+1} -t_n  \leq s_{n+1} := t_{n+2} -t_{n+1}.
\end{equation}
Therefore, the average speed $\bar{c}_n:= \frac{1}{s_n}$ of $r(t;0)$ in the interval $[n,n+1]$ is decreasing in $n$. We now show that $\bar{c}_n$ has a positive limit:
\begin{equation}\label{barcn-lower-bound}
\bar{c}_n \searrow c^*\geq c,\quad \mbox{or, equivalently}\quad s_n \nearrow T:= \frac{1}{c^*} \mbox{\ \ as } n\to \infty,
\end{equation}
where $c$ is the speed of the traveling wave $\phi(x-c t)$ in \eqref{varphi<vn}.  In fact, if $c^*<c$, then $\bar{c}_n <c-\delta$ for some small $\delta>0$ and all large $n$ (to say, for $n\geq N$). Then by \eqref{varphi<vn} we have
$$
c t_n  <r(t_n;0)=n-N + r(t_N;0) <(c -\delta) (t_n-t_N) + r(t_N;0).
$$
This is a contradiction when $n$ is sufficiently large. This proves \eqref{barcn-lower-bound}.

\medskip
\noindent
\underline{\it Step 4. Positive lower bound of instantaneous speed $r'(t;0)$}. We will show the following lemma, which is crucial in our approach.
\begin{lem}\label{lem:positive-b'}
There exists a function $\delta_0(x)\in C^2((-\infty, 0])$ with
\begin{equation}\label{def-delta-0}
\delta_0(x) >0 \mbox{ in } (-\infty, 0), \quad  \delta_{0*}:= \liminf\limits_{x\to -\infty} \delta_0(x)>0,\quad \delta_0(0)=0,\quad \delta^* := - \delta'_0(0-0)>0
\end{equation}
such that
\begin{equation}\label{v>delta0}
\left\{
\begin{array}{l}
v(x,t;0) \geq \delta_0 (x-r(t;0)),\qquad x\leq r(t;0),\ t>t_2,\\
r'(t;0) = - v_x (r(t;0)-0,t;0) \geq \delta^* = -\delta'_0(0-0) ,\qquad t>t_2.
\end{array}
\right.
\end{equation}
\end{lem}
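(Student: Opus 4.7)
\medskip
\noindent
\textbf{Plan.} The two conclusions in \eqref{v>delta0} are linked: once a profile $\delta_0$ with $\delta_0(0)=0$ and $-\delta_0'(0-0)=\delta^*$ satisfying $v(x,t;0)\geq\delta_0(x-r(t;0))$ is built, differentiating at the free boundary from the left and invoking Darcy's law \eqref{Darcy-bn} immediately forces $r'(t;0)\geq\delta^*$. The goal therefore reduces to constructing $\delta_0$, after first securing a preparatory pointwise lower bound on the instantaneous speed.

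\medskip
\noindent
\emph{A priori speed bound.} By \eqref{speed-comp}, $y\mapsto r'(t_y;0)$ is nonincreasing; since $t\mapsto r(t;0)$ is strictly increasing, $r'(\cdot;0)$ itself is nonincreasing in $t$ and admits a limit $c_\infty:=\lim_{t\to\infty}r'(t;0)\geq0$. Applying the mean value theorem to $r$ on $[t_n,t_{n+1}]$ yields $\xi_n\in(t_n,t_{n+1})$ with $r'(\xi_n;0)=\bar c_n$; passing to the limit together with \eqref{barcn-lower-bound} gives $c_\infty=c^*>0$. Hence $r'(t;0)\geq c^*$ for every $t>t_2$, and this pointwise lower bound on the free boundary speed is the main dynamical input for the barrier construction that follows.

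\medskip
\noindent
\emph{Construction of $\delta_0$.} Pick $c_1\in(0,c^*)$ and $d\in(0,\kappa_0)$ close to $\kappa_0$ as in Proposition \ref{prop:compact-TW}, which supplies a compactly supported traveling-wave subsolution $\varphi(\cdot;c_1)$ of \eqref{homo-PME-eq} on $[-l,l]$; its pressure $\phi(z;c_1):=\Psi(\varphi(z;c_1))$ satisfies $c_1<-\phi'(l-0;c_1)$, and by \eqref{F} the shift $\phi(x-c_1 t-y;c_1)$ is also a subsolution of \eqref{v-PME} for every $y\in\R$. For each $s>t_3$ set
\begin{equation*}
\bar V_s(x,t):=\phi\bigl(x-c_1(t-s)-r(s;0)+l;c_1\bigr),\qquad t\geq s,
\end{equation*}
whose support at $t=s$ is $[r(s;0)-2l,r(s;0)]$ and whose right endpoint at $t>s$ lies at $c_1(t-s)+r(s;0)<r(t;0)$. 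Combining the positive lower bound for $v(\cdot,s;0)$ well inside the positivity set (obtained by iterating \eqref{varphi<vn} and using the closeness of $v$ to the stationary profile $q_0$ far from the front) with the gradient estimate \eqref{gradient-upper} near the front, I verify the initial ordering $v(\cdot,s;0)\geq\bar V_s(\cdot,s)$ on the support; Proposition \ref{prop:inter} then propagates this inequality to every $t\geq s$. The profile $\delta_0$ is finally assembled by setting $\delta_0(z):=\phi(z+l;c_1)$ on $[-2l,0]$ and extending smoothly on $(-\infty,-2l]$ by a positive constant bounded above by the uniform lower bound of $v$ on that region; the resulting $\delta_0$ lies in $C^2((-\infty,0])$, vanishes only at $0$, satisfies $\delta_{0*}>0$, and $\delta^*:=-\delta_0'(0-0)=-\phi'(l-0;c_1)>c_1>0$.

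\medskip
\noindent
\textbf{Main obstacle.} The delicate step is verifying and propagating the initial ordering $v(\cdot,s;0)\geq\bar V_s(\cdot,s)$. The degeneracy at the free boundary precludes a Hopf-type comparison, so the ordering must be tracked through the intersection-number machinery of Proposition \ref{prop:inter}, requiring careful control of the relative positions of $r(t;0)$ and $c_1(t-s)+r(s;0)$; it also requires that $c_1$ (and the associated $d$) be chosen so that the slope condition $-\phi'(l-0;c_1)\leq c^*$ holds at the right endpoint, which is precisely where the a priori speed bound from the first part is consumed. A secondary technical point is securing the uniform-in-$s$ lower bound $v(\cdot,s;0)\geq\Psi(d)$ far behind the front, needed to justify the constant extension of $\delta_0$ and its property $\delta_{0*}>0$; this comes from iterating translates of the subsolution in \eqref{varphi<vn} and exploiting the $L$-periodicity of the ambient equation.
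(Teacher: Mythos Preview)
Your speed-bound step does not go through. Inequality \eqref{speed-comp} asserts only $r'(t_y;0)\ge r'(t_{y+1};0)$ for each $y$, i.e.\ monotonicity under a unit spatial shift; it does \emph{not} yield $r'(t_{y_1};0)\ge r'(t_{y_2};0)$ for arbitrary $y_1<y_2$, so $t\mapsto r'(t;0)$ need not be nonincreasing and the limit $c_\infty$ you invoke need not exist. (The paper itself uses \eqref{speed-comp} only to chain \emph{upper} bounds back to $y\in[1,2]$, never to claim global monotonicity.) Your mean-value argument therefore cannot deliver $r'(t;0)\ge c^*$, and since a positive lower bound on $r'$ is precisely the second conclusion of the lemma, you cannot legitimately feed it into the barrier construction as an established input.

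Even if one grants $r'(t;0)\ge c^*$, the barrier placement has further problems. With the right endpoint of $\phi(\cdot;c_1)$ set at $r(s;0)$, the ordering $v(\cdot,s;0)\ge\bar V_s(\cdot,s)$ near that endpoint requires $r'(s;0)\ge -\phi'(l-0;c_1)$; Proposition~\ref{prop:compact-TW} only gives $-\phi'(l-0;c_1)>c_1$, and for small $c_1$ this slope tends to the fixed boundary slope of the ground state of \eqref{homo-PME-eq}, which bears no a priori relation to $c^*$---so the slope condition you flag may simply be unattainable. Moreover, \eqref{gradient-upper} bounds $v_x$ from \emph{below} and hence produces only the upper estimate \eqref{est-near-bt} on $v$ near the front, not a lower one; and \eqref{varphi<vn} controls $v$ only at positions $O(ct)$ behind the front, which falls ever further behind $r(s;0)$ and says nothing at a fixed distance $2l$. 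The paper avoids all of this by reversing the roles: it launches a small compactly supported solution $\hat v(\cdot,t;x_0)$ whose front is shown, via an explicit supersolution in the variable $w=A'(u)$, to advance by less than one period in time $3T$; sliding $x_0$ until the fronts of $\hat v$ and $v$ reach the target point $y$ simultaneously, Proposition~\ref{prop:inter} forces $v\succapprox\hat v$ at that instant, and $\delta_0$ together with $\delta^*=-\delta_0'(0-0)$ is read off from $\hat v$---no prior control on $r'$ is required.
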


\begin{proof}
\underline{\it Step 1}. We first prepare an auxiliary supersolution. Define
$$
w := A'(u) \ \ \Leftrightarrow\ \ u= X(w):= (A')^{-1}(w).
$$
So $w \equiv A'(X(w))$. Differentiating this equality twice we have
$$
\frac{du}{dw} = X'(w)=\frac{1}{A''(X(w))}, \qquad \frac{d^2 u}{dw^2}= X''(w)= \frac{- A'''(X(w))}{[A''(X(w))]^3}.
$$
Substituting $u=X(w)$ into the equation \eqref{E} we have
\begin{equation}\label{equ-w=X}
w_t= w w_{xx}+ \left[1- \frac{A'(X(w))A'''(X(w))}{[A''(X(w))]^2} \right] w_x^2+ A''(X(w))f(x, X(w))[\kappa(x)- X(w)].
\end{equation}
By \eqref{F} we have $f(x,u)[\kappa(x)-u]\leq K_1 u$ for all $u\geq 0$ and some $K_1>0$. Then, by the assumption $\frac{rA''(r)}{A'(r)}\to A_*\ (r\to 0+0)$ in  \eqref{ass-A} we have
$$
A''(X(w))f(x, X(w))[\kappa(x)- X(w)] = K_1 u A''(u) \leq KA'(u)=Kw,\qquad u\geq 0,
$$
for some $K>0$. Combining this inequality with the assumption $A'(r),A''(r),A'''(r)>0$ for $r>0$ in \eqref{ass-A} we conclude that
$$
w_t \leq w w_{xx}+ w_x^2+ K w.
$$

For the upper bound $T$ of $s_n$ in \eqref{barcn-lower-bound}, we choose $\delta_*>0$ small such that
\begin{equation}\label{choice-delta1}
2 \delta_*^{1/2} \Big(3T +1\Big)^{1/2} e^{\frac{K(3T+1)}{2}} <1.
\end{equation}
For any $x_0\in \R$, we consider the function
\begin{equation}\label{def-bar-u}
\bar{w} (x,t) := \frac14 e^{K(t+1)} \left( \delta_* -\frac{(x-x_0)^2}{(t+1)e^{K(t+1)}}\right), \qquad x\in J(t),\ t>0,
\end{equation}
where
$$
J(t)= [x_0-\rho(t),\ x_0+\rho(t)]\quad \mbox{\ and\ }\quad \rho(t):= \delta_*^{1/2}(t+1)^{1/2} e^{\frac{K(t+1)}{2}},\qquad t>0.
$$
A direct calculation shows that
$$
\bar{w}_t \geq \bar{w}\bar{w}_{xx} + \bar{w}_x^2 + K\bar{w},\qquad x\in J(t),\ t>0,
$$
Thus, $\bar{w}$ is a supersolution of \eqref{equ-w=X}.
More precisely, let $\hat{w}(x,t)$ be the solution of \eqref{equ-w=X} with initial data
$$
\hat{w}(x,0) \equiv \bar{w}(x,0) = \frac14 e^K \left( \delta_* -e^{-K}(x-x_0)^2\right)_+  \in \X.
$$
Denote its free boundaries by $\hat{l}(t)$ and $\hat{r}(t)$. Then
$$
\bar{w}(x,t)\geq \hat{w}(x,t), \qquad x\in [\hat{l}(t),\hat{r}(t)],\ t\geq 0.
$$
So, the right free boundary $\bar{r}(t):= x_0 +\rho(t)$ of $\bar{w}(\cdot,t)$ satisfies
\begin{equation}\label{hatr<barr}
\hat{r}(t) \leq \bar{r}(t) = x_0 +\rho (t),\qquad t>0.
\end{equation}

\medskip
\noindent
\underline{\it Step 2}. Denote by $\hat{u}(x,t;x_0):= X(\hat{w}(x,t))$ the solution of \eqref{E} with initial data $X(\hat{w}(x,0))=X(\bar{w}(x,0))$, and denote
by $\hat{v}(x,t;x_0):= \Psi(\hat{u}(x,t;x_0))$ the corresponding pressure function. Denote it right free boundary by $\hat{r}(t;x_0)$. Here we add $x_0$ in the notations $\hat{v}(x,t;x_0)$ and $\hat{r}(t;x_0)$ to emphasize that the center of the support of $\hat{v}(x,0;x_0)$ is $x_0$. This is crucial in the subsequent discussion.
For any $n\geq 2$, $y\in [n,n+1)$ and some suitably chosen $x_0$, we will compare $v(x,t+t_{n-2};0)$ and $\hat{v}(x,t;x_0)$ on the left side of $y$.

We first choose $x_0=y-\rho(0)$, then the support of $\hat{v}(x,0;x_0)$ is $[y-2\rho(0), y]$. Since the waiting time for the right free boundary $\hat{r}(t;x_0)$ of $\hat{v}(x,t;x_0)$ is zero, $\hat{r}(t)$ crosses the point $y$ immediately at $t=0$. Of course, it is earlier than the free boundary $r(t+t_{n-2};0)$ of $v(x,t+t_{n-2};0)$.

Next, we choose $x_0\in [n-2+\rho(0),n-1+\rho(0)]$. Then by \eqref{hatr<barr} and the choice of $\delta_*$ we have
$$
\hat{r}(3T;x_0) \leq x_0 + \rho(3T) \leq  n-1 +\rho(0) + \rho(3T) < n\leq y.
$$
This means that $\hat{r}(t;x_0)$ will use time more than $3T$ to cross the point $y$. On the other hand $r(t+t_{n-2};0)$ will use time $t_y - t_{n-2}\leq s_{n-2}+s_{n-1}+s_n\leq 3T$ to cross $y$,
earlier than $\hat{r}(t;x_0)$. Since the support of $v(x,t_{n-2};0)$ lies on the left of that of $\hat{v}(x,0;x_0)$, by Proposition \ref{prop:inter} we see that
\begin{equation}\label{b-faster-r1}
v(x,t_y;0) \succ \hat{v}(x,t_y - t_{n-2};x_0).
\end{equation}

Consequently, there exists $x_0^y\in (n-2+\rho(0), y-\rho(0))$ such that \eqref{b-faster-r1} holds for all $x_0\in [n-2+\rho(0), x_0^y)$, while when we choose $x_0=x_0^y$, there holds
\begin{equation}\label{v>v1-bdry0}
v(x,t_y;0)\succapprox \hat{v}(x,t_y-t_{n-2};x_0^y),\qquad
v(y,t_y;0)= \hat{v}(y,t_y-t_{n-2};x_0^y)=0.
\end{equation}

\medskip
\noindent
\underline{\it Step 3}. Now we give lower estimates for $v(x,t_y;0)$ in $I_1(y)\cup I_2(y) \cup I_3(y)$, where
$$
I_1(y):= [y-\rho(0),y],\qquad I_2(y):= [x_0^y, y-\rho(0)],\qquad I_3(y):= [x_0^y-1, x_0^y).
$$
First, we define
$$
\tilde{\delta}_1 (x) := \frac12 \min\limits_{0\leq x_0\leq 1} \min\limits_{0\leq t\leq 3T} \hat{v}(x + \hat{r}(t;x_0),t;x_0),\qquad -\rho(0)\leq x \leq 0,
$$
and take $\delta_1(x)$ as a smoothen function of $\tilde{\delta}_0(x)$ such that
\begin{equation}\label{delta-0-property}
\delta_1(x) \leq 2 \tilde{\delta}_1 (x),\qquad \delta_1(x)>0 \mbox{ in } [-\rho(0),0),\qquad \delta'_1(0-0)<0.
\end{equation}
Then \eqref{v>v1-bdry0} implies that
\begin{equation}\label{v>v1-bdry}
v(x,t_y;0)\succapprox \delta_1 (x-y),\qquad x\in I_1(y).
\end{equation}
Next, we consider the case where $x\in I_2(y)$. In this case, by \eqref{v>v1-bdry0} we have
\begin{equation}\label{v>v1-near-bdry}
v(x,t_y;0) \geq  \min\limits_{z\in I_2(y)} \hat{v}(z,t_y-t_{n-2};x_0^y) \geq
\delta_2:= \inf\limits_{y>2} \min\limits_{z\in I_2(y)} \hat{v}(z,t_y -t_{n-2}; x_0^y).
\end{equation}
%\begin{equation}\label{v>v1-near-bdry}
%\begin{array}{ll}
%v(x,t_y;0) & \geq  \min\limits_{x_0^y \leq z\leq x_0^y +\rho(t_y -t_{n-2}) -\rho(0)} \hat{v}(z,t_y-t_{n-2};x_0^y) \geq \min\limits_{(z,t)\in D_2} \hat{v}(z,t;x_0^y)\\
%& \geq  \delta_2:= \min\limits_{0\leq x_0 \leq 1} \min\limits_{(z,t)\in D'_2} \hat{v}(z,t;x_0)>0.
%\end{array}
%\end{equation}
Since $t_y -t_{n-2}$ is a finite time in $(0, 3T)$ we see that $\delta_2$ is a positive constant independent of $y$. (One can prove this point by a contradiction argument.)
Finally, we consider the case where $x\in I_3(y)$. Denote
$$
D_3 := \{(z,t)\mid x_0^y-1 \leq z\leq x_0^y , 0\leq t \leq 3 T\},\qquad
D'_3 := \{(z,t)\mid 0 \leq z\leq 1, 0\leq t \leq 3 T\}.
$$
For any $x_0\in [x_0^y -1, x_0^y)$, by \eqref{b-faster-r1} we have
\begin{equation}\label{v>v1-middle}
v(x_0,t_y;0)  \geq  \min\limits_{0\leq t\leq 3T} \hat{v}(x_0,t;x_0) \geq \min\limits_{(z,t)\in D_3 } \hat{v}(z,t;z) = \delta_3 := \min\limits_{(z,t)\in D'_3 } \hat{v}(z,t;z) >0.
\end{equation}
Combining \eqref{v>v1-bdry}, \eqref{v>v1-near-bdry} together with \eqref{v>v1-middle} we conclude that, for any $y>2$, there holds
\begin{equation}\label{v>tilde-delta}
v(x,t_y;0) \succapprox \delta_0(x-y),\qquad x\in [y-\rho(0)-1,y],
\end{equation}
if we define $\delta_0(x)$ by
$$
\delta_0(x) := \left\{
\begin{array}{ll}
\delta_1(x), & x\in [-\rho(0), 0],\\
\mbox{increasing } C^2 \mbox{ function}, & x\in [-\rho(0)-\frac12, -\rho(0)],\\
\delta_4:= \frac12 \min\{\delta_1(-\rho(0)), \delta_2, \delta_3\}, & x\in [-\rho(0)-1, -\rho(0)-\frac12].
\end{array}
\right.
$$

\medskip
{\it Step 4}. Finally, by the definition of $v(x,t;k)$ and by comparison, for any positive integer $k$, we have
$$
v(x,t;0)\geq v(x,t;-k)\equiv v(x+k,t;0),\qquad x\in \R,\ t>0.
$$
Thus, the lemma is proved if we extend $\delta_0(x)$ in $(-\infty, y-\rho(0)-1]$ as $\delta_4$.
\end{proof}

\subsection{Renormalization, proofs of Theorem \ref{thm:main1} and Proposition \ref{prop:PTW-limit}}
Based on the uniform-in-time a priori estimates in the previous subsection, we now can consider the convergence in the renormalization sequence $\{v_n\}$ and $\{r_n\}$ defined by \eqref{def:v-n}.

\medskip
\noindent
\underline{\it Step 1. Convergent subsequence}. For large $n$, from the results in the previous subsection we know the following facts:
\begin{enumerate}[(a).]
\item $v_n(x,t)\geq \delta_0 (x-r_n(t))$ for $x\leq r_n(t),\ t>-t_n$;
\item $v_n(x,t)\geq \delta_4$ for all $x\leq r_n(t)-\rho(0),\ t>-t_n$;
\item $0\leq v_n(x,t)\leq C_1 [r_n(t)-x]$ for $x\in [r_n(t)-1, r_n(t)],\ t>-t_n$.
\end{enumerate}
It follows from (a), (b) and the standard parabolic theory that, for any compact domain $K\subset Q:= \{(x,t)\mid x<R(t),\ t\in \R\}$ (where $R(t)\in C(\R)$ is the limit function in \eqref{Bni-to-B}), and any $\alpha'\in (0,1)$, there holds
$$
\|v_{n_i}(x,t)\|_{C^{2+\alpha', 1+\alpha'/2}(K)}\leq C(\alpha',K),
$$
for the same subsequence $\{n_i\}$ as in \eqref{Bni-to-B}. Hence, the sequence $\{v_{n_i}\}$ has a convergent subsequence which converges in the topology $C^{2+\alpha,1+\alpha/2}(K)$ ($\alpha\in (0,\alpha')$) to some function $V(x,t)$.
Using Cantor's diagonal argument, for any $\alpha\in (0,1)$, we can find a function $V(x,t)\in C^{2+\alpha,1+\alpha/2}(Q)$ and a subsequence of $\{n\}$ (denote it again by $\{n_i\}$) such that
\begin{equation}\label{Vni-to-V-C2}
v_{n_i}(x,t)\to V(x,t) \mbox{ as }i\to \infty,\qquad \mbox{in the topology of } C^{2+\alpha,1+\alpha/2}_{loc}(Q).
\end{equation}
In addition, $V(0,0)=0$ since $v_n(0,0)=0$.

In what follows, we extend $V$ to be zero in the domain $\R^2 \backslash Q$, and write the extended function (which is defined over $\R^2$) as $V$ again.

\medskip
\noindent
\underline{\it Step 2. To show that $V$ is a very weak entire solution and $R(t)$ is its free boundary}.
Taking $n=n_i$ in (a) and (c), and taking limits as $i\to \infty$ we have
\begin{equation}\label{V-is-large-bdry}
\delta_0(x-R(t))\leq V(x,t)\leq C_1[R(t)-x] \mbox{ for } x\in [R(t)- 1, R(t)],\ t\in \R.
\end{equation}
This implies that $V(x,t)\in C(\R^2)$.  Substituting $u_{n_i}:= \psi(v_{n_i})$ into the definition of very weak solution in section 1, and taking limits as $i\to \infty$, we also see that
\begin{equation}\label{def-U-from-V}
U(x,t) := \psi(V(x,t))
\end{equation}
is a very weak solution of the equation in \eqref{E}. Equivalently, $V$ is a very weak solution of \eqref{v-PME} for all $t\in \R$, and it is classical in $Q$. This implies that $R(t)\in C^1(\R)$, it is the free boundary of $V(\cdot,t)$, and so it satisfies the Darcy's law:
\begin{equation}\label{B'=-Vx}
R'(t) = -V_x(R(t)-0,t) \geq \delta^* := -\delta'_0(0-0)>0,\qquad t\in \R.
\end{equation}

\medskip
\noindent
\underline{\it Step 3. To show that the convergence \eqref{Vni-to-V-C2} holds for the whole sequence $\{v_n\}$}. We only need to show that, if
\begin{equation}\label{vn'j-to-hatV}
v_{n'_j}(x,t)\to \widehat{V}(x,t) \mbox{ as }j\to \infty,\qquad \mbox{in the topology of } C^{2+\alpha,1+\alpha/2}_{loc}(Q),
\end{equation}
then $V(x,t)\equiv \widehat{V}(x,t)$. By contradiction, assume that, for some $s_1\in \R$,
$V(x,s_1)\not\equiv \widehat{V}(x,s_1)$. We choose a positive integer $k$ such that $-kT<s_1$.
In \eqref{exact-greater} we rewrite $x$ as $x+n$ and take $y=n-k$ for large $n$, then we have
$$
 v(x+n, t_n+(t_{n-k}-t_n);0)\geq v(x+n+1, t_{n+1}+(t_{n-k+1}-t_{n+1});0),\qquad x\in \R,
$$
that is,
$$
\tilde{v}_n(x) :=v_n(x,t_{n-k}-t_n) \geq \tilde{v}_{n+1}(x) :=v_{n+1}(x,t_{n-k+1}-t_{n+1}),\qquad x\in \R.
$$
This means that the sequence $\{\tilde{v}_n(x)\}$ is decreasing in $n$, and so it converges pointwisely as $n\to \infty$ to some function $\widetilde{V}(x)$.
On the other hand, the limits in \eqref{Vni-to-V-C2} and \eqref{vn'j-to-hatV} imply that
$$
V(x,-kT) = \widetilde{V}(x) =\widehat{V}(x,-kT),\qquad x\in \R.
$$
So, $V(x,t-kT) \equiv \widehat{V}(x,t-kT)$ for all $t>0$ since they are both solutions of
\eqref{v-PME} with the same initial data $\widetilde{V}(x)$. This leads to a contradiction when we take $t=kT +s_1>0$. This proves that the convergence \eqref{Vni-to-V-C2} holds for the whole sequence $\{v_n\}$.
Consequently, the convergence in \eqref{Bni-to-B} can also be improved to
\begin{equation}\label{Bn-to-B}
r_n(t)\to R(t) \mbox{ as }n\to \infty,\qquad \mbox{in the topology of }C_{loc}(\R).
\end{equation}

In addition, since $v(x,t;0)$ starts from $H(-x)q(x)$, for any $t>0$, $v(x,t;0)$ is {\it steeper than } any entire solution of \eqref{v-PME} lying in the range $[0,q(x)]$ (in the sense of \cite{DGM}), so does $v_n(x,t)$. In particular, $v_n(x,t_1)$ is steeper than $V(x,t_2)$. As a consequence, $V(x,t_1)$ and $V(x,t_2)$ are steeper than each other.

\medskip
\noindent
\underline{\it Step 4. To show that $V$ is a periodic traveling wave}. For any $x,t\in \R$ and any large integer $n$, we have
$$
v_{n+1}(x-1,t) =  v(x+n, t+t_n +(t_{n+1}-t_n);0) = v_n(x,t+t_{n+1}-t_n).
$$
Taking limit as $n\to \infty$ in both sides we conclude
\begin{equation}\label{PTW-V}
V(x-1,t)=V(x,t+T),\qquad x,\ t\in \R.
\end{equation}
This means that $V$ is a periodic traveling sharp wave of \eqref{v-PME}.

\begin{remark}\label{rem:c*T}\rm
By \eqref{PTW-V} we know that the traveling sharp wave $V$ has average speed $c^*:= \frac{L}{T}$, and so its free boundary $R(t)$ satisfies 
$$
R(t+kT)\equiv R(t)+kL\quad \mbox{\ \ for all integer\ \ } k.
$$ 
Thus, $R(t)-c^* t$ is a periodic function with average $0$.
\end{remark}

\medskip
\noindent
\underline{\it Step 5. To show that $V(x,t)\to q_*(x)$ as $x\to -\infty$ for some periodic stationary solution $q_*(x)$ of} \underline{\it \eqref{v-PME}.} We first prove a claim:
\begin{equation}\label{Vt>0}
V_t(x,t)>0,\qquad x,\ t\in \R.
\end{equation}
Differentiating $V(R(t),t)\equiv 0$ we have $V_t(R(t),t)= V_x^2(R(t)-0,t)>0$ by \eqref{B'=-Vx}. So, in order to prove the conclusion we need only to show that, for each $s_1\in \R$, $V_t (x,s_1)>0$ for all $x \in \R$.
Assume by contradiction that, for some $s_1$, there exist $x_1\in \R$ such that $V_t(x_1, s_1)\leq 0<V_t(R(s_1),s_1)$. Then, for small $s>0$, $V(\cdot, s_1+s)$ intersects $V(\cdot,s_1)$ at least once. Since they are steeper than each other (see the end of Step 3), we actually have $V(x,s_1)\equiv V(x,s_1 +s)$, and so $V_t(x,s_1)\equiv 0$, a contradiction.

Next, for any positive integer $k$, we set
$$
V_k (x,t) := V(x-k,t) \leq q(x),\qquad x,\ t\in \R.
$$
Since $v(x,t;1)\equiv v(x-1,t;0)\succ v(x,t;0)$, we have $v_n(x-1-k,t)\succ v_n(x-k,t)$ and so $V_{k+1}(x,t):= V(x-k-1,t)\geq V_k(x,t) := V(x-k,t)$. Thus, there exists a positive entire solution $W(x,t)$ such that
$$
V_k(x,t) \nearrow W(x,t) \mbox{ as }k\to \infty,\qquad \mbox{in the topology of } C^{2+\alpha,1+\alpha/2}_{loc} (\R^2).
$$
This also means that $V_{k+1}(x,t)\equiv V_k(x-1,t)\to W(x-1,t)$ as $k\to \infty$, and
$W(x,t)\geq \delta_4$ for all $t\in \R$ and $x\in [0,1]$. So
\begin{equation}\label{Wx-1=Wx}
W(x-1,t)\equiv W(x,t) \geq \delta_4.
\end{equation}

On the other hand, since $V$ is a periodic traveling wave, we have
$$
V_k (x-1,t) \equiv V (x-k-1,t) \equiv V(x-k, t+T) \equiv V_k(x,t+T).
$$
Taking limit as $k\to \infty$ we have $W(x-1,t)\equiv W(x,t+T)$. Combining together with \eqref{Wx-1=Wx} we have $W(x,t)\equiv W(x,t+T)$. Finally, the inequality \eqref{Vt>0} implies that  $W_t\geq 0$. So, $W_t(x,t)\equiv 0$, and $W(x,t)\equiv q_*(x)\geq \delta_4$ for some $1$-periodic stationary solution $q_*(x)$ of \eqref{v-PME}.

\begin{remark}\label{rem:ss-terrace}\rm
Note that for any give $t\in \R$ and any large $n$, $v_n(x,t)\to q(x)$ as $x\to -\infty$. However, the convergence $v_n\to V$ discussed in Step 3 is taking in the topology of $C^{2+\alpha,1+\alpha/2}_{loc}(\R^2)$.  Hence, the limit $q_*(x)$ of $V(x,t)$ as $x\to -\infty$  is not bigger than $q(x)$, but not necessarily to be $q(x)$ itself. For example, in the multistable cases, $v_n$ may be characterized by a propagating terrace. In that case, $V$ is actually the lowest traveling wave of the terrace. We will show in the next section that $q_*(x)\equiv q(x)$ for monostable, bistable and combustion equations, and no propagating terrace exists for such simple equations.
\end{remark}

\begin{remark}\label{rem:d-renorm}\rm
In our approach, we take the renormalization at the time $t_n$ when $v(n,t_n)=0$. It is natural to try to renormalize $v(x,t)$ by considering the $d$-level set (as it was  done in \cite{DGM} for RDEs): for some $d>0$, set $s_n$ the time when $v(n,s_n)=d$, and consider the renormalized sequence $v(x+n, t + s_n)$ . If this sequence converges to some entire solution
$V(x,t)$, then it can be expected to be a periodic traveling wave. Of course it is non-trivial. The problem, however, is that we do not know whether $V(x,t)$ is a sharp wave having a free boundary, or it is positive on the whole line $\R$ (c.f. Remark \ref{rem:renormalize-place}). For bistable or combustion equations, this may be clarified.
In monostable case, however, it seems difficult to distinguish the sharp wave from other traveling waves with positive profiles. Our renormalization process, however, can lead to a sharp wave.
%
%As we mentioned in section 1, if we take the renormalization by considering the $h$-level set for some $h>0$, then the limit of the renormalized sequence must be a non-trivial one, and so the a priori estimates in subsection 3.2 (which are crucial in the current paper) can be simplified in some sense. However, the renormalization limit obtained in such a way maybe a traveling wave above the second floor in the terrace, and so it is not the lowest sharp one with a free boundary.
\end{remark}

\medskip
\noindent
{\it Proof of Theorems \ref{thm:main1} and Proposition \ref{prop:PTW-limit}}. The conclusions follow from the construction of $V$ and the properties we proved in the above steps.
\hfill \qed

\section{Periodic Traveling Sharp Wave of Monostable, Bistable and Combustion Equations}

In this section we consider the equation \eqref{E} with special reactions as in Cases (a), (b) and (c) in section 1. Proposition \ref{prop:p>kappa} is proved in the following subsections.

\subsection{The monostable and combustion cases}
If $f$ is of Case (a), then the equation \eqref{E} has no periodic stationary solution in the range $(0,\kappa_0)$. Hence the limit $p(x)$ of $U(x,t)$ as $x\to -\infty$ must be an element in $\calS$.

If $f$ is of Case (c), then $p$ must be a positive constant in $(0,\theta]$ if it does not belong to $\calS$. Thus, the periodic traveling sharp wave $U(x,t)$ is actually a sharp wave of $u_t = [A(u)]_{xx}$. This, however, is impossible. In fact, if $U(x,t)=\varphi_0 (x-c_0 t)$ is the corresponding sharp wave with speed $c_0$ and normalized condition $U(0,0)=0$, then $\varphi_0(z)$ satisfies
\begin{equation}\label{no-reaction}
[A(\varphi_0)]''(z) + c_0 \varphi'_0 (z) =0,\qquad z\leq 0,
\end{equation}
and
$$
\varphi_0(-\infty)=p,\quad \varphi'_0(-\infty)=0,\quad \varphi_0(0)=0,\quad -[\Psi(\varphi_0)]'(0) =c_0.
$$
For $z<0$, integrating \eqref{no-reaction} over $(-\infty, z]$ we have
$$
[A(\varphi_0)]'(z)  +c_0 \varphi_0 (z) -c_0 p =0.
$$
So, $[A(\varphi_0)]'(0-0)=c_0 p$. This, however, contradicts the following fact
$$
c_0 p = [A(\varphi_0)]'(0-0) =  A'(0+0) \cdot \varphi'_0(0-0) =
[\Psi(\varphi_0)]'(0-0)\cdot \varphi_0(0-0) = 0.
$$

\subsection{The bistable case}\label{subsec:bistable-PTW}
We divide the proof into several steps.

\medskip
\noindent
\underline{\it Step 1. To seek for a transition solution}.
By $\int_u^{\kappa_0}A'(r)g_0(r)dr>0$ in \eqref{F} it is easily seen that
the equation \eqref{homo-PME-eq} with homogeneous reaction $g_0(u)$ has a compactly supported stationary solution $\varphi(x;0)$ for some $l>0$, it satisfies
$$
\varphi(x;0)>0 \mbox{ in } (-l,l),\quad \varphi(\pm l;0)=0,\quad \bar{d}:=\max\limits_{[-l,l]}\varphi(x;0)\in (\theta, \kappa_0).
$$
Now we consider the Cauchy problem of \eqref{E} with initial data $\sigma\varphi(x)\in \X$, where $\sigma>0$ is a parameter. Denote the corresponding solution by $u_{\sigma}(x,t)$.  On the one hand, the general convergence result as in \cite[Theorem 1.1]{LouZhou} holds, which guarantees that $u_{\sigma}$ converges as $t\to \infty$ to some stationary solution (denote it by $w_{\sigma}$) of the Cauchy problem of \eqref{E}. On the other hand, we see that when $\sigma\ll 1$, vanishing happens in the sense that $w_{\sigma}\equiv 0$ due to the bistable property of $f$; when $\sigma>1$, spreading happens in the sense that $w_{\sigma}\in \calS$ (In fact, for such a $\sigma$, $u_{\sigma}> \varphi$ by comparison, so $w_{\sigma}\geq \varphi$. Then by sliding $\varphi(x;0)$ below $w_{\sigma}(x)$ we see that $\varphi$ supports $w_{\sigma}$ everywhere from below. Thus $w_{\sigma}(x)\geq \bar{d}$ for all $x\in \R$, and so it is in $\calS$.) Then both $\Sigma_0:= \{\sigma>0\mid \mbox{vanishing happens for }u_{\sigma}\}$ and $\Sigma_1:= \{\sigma>0 \mid u_{\sigma} \mbox{ converes to some element in }\calS\}$ are non-empty sets. In addition, it is easy to see that both sets are open (cf. \cite{DuLou,DuMatano}). Hence $\Sigma_*:= (0,\infty)\backslash (\Sigma_0\cup\Sigma_1)$ is non-empty and closed. For any $\sigma_* \in \Sigma_*$, we see that $u_{\sigma_*}(x,t)$ converges to some positive stationary solution $w_*$, with $\sup w_*(x)>\theta$ or $w_*(x)\equiv \theta$ (since $u_{\sigma_*}$ is a solution neither spreading nor vanishing).

\medskip
\noindent
\underline{\it Step 2. Properties of $w_*$.} Denote by $Z_1(t)$ (resp. $Z_2(t)$, $Z_*$) the number of zeros of $u_{\sigma_*}(x,t)-\theta$
(resp. $u_{\sigma_*}(x,t)-\hat{p}(x)$, $w_*(x)-\theta$), where $\hat{p}$ is the periodic stationary solution of \eqref{E} in Case (b).
By the intersection number properties in subsection 3.1 we see that $Z_1(t)$ and $Z_2(t)$ are finite and decreasing in $t$.

We explain that $w_*(x) \not\equiv \theta$. Otherwise, $Z_2(t)$ will be larger and larger as $t$ increasing since $\theta$ has infinitely  many intersection points with $\hat{p}(x)$. This is a contradiction. As a consequence, we have $\sup w_*(x) >\theta$.

Next we prove $Z_*=2$. Since $Z_1(0)=2$, we see that $Z_1(t)\leq 2$ for all $t>0$, and so $Z_*\leq 2$. In fact, if $Z_*=0$ or $Z_*=1$, then $w_*(x)>\theta$ in some infinite interval $J$. Without loss of generality, we assume $J$ contains $[x_1, \infty)$.
By the assumption in the bistable Case (b), we can find a homogeneous bistable reaction (denote it again by $g_0(u)$) such that $g_0(0)=g_0(\theta)=g_0(\kappa_0)=0$ and $g'_0(\theta)>0$. Take a large interval $J_1 \subset J$ and consider the problem
\begin{equation}\label{p-J1}
\left\{
 \begin{array}{ll}
 u_t = [A(u)]_{xx}+g_0(u), & x\in J_1, t>0,\\
 u(x,t)=\theta, & x\in \partial J_1,\\
 u(x,0)=w_*(x), & x\in J_1.
\end{array}
\right.
\end{equation}
Denote its solution by $\tilde{u}(x,t)$. By comparison we first have $w_*(x)> \tilde{u}(x,t)$ in $J_1$. On the other hand, by the general convergence result, $\tilde{u}(x,t)$ converges as $t\to \infty$ to some stationary solution $\tilde{w}(x)$ of \eqref{p-J1} with $\tilde{w}(x)>\theta$ in the interior of $J_1$. Sliding $\tilde{w}(x)$ to positive infinite of the $x$-axis, we conclude that
$$
w_*(x)\geq \max\limits_{J_1} \tilde{w}(x) >\theta,\qquad x\gg x_1.
$$
This reduces to the spreading phenomena for $u_{\sigma_*}$, a contradiction.

\medskip
\noindent
\underline{\it Step 3. To show $w_*$ is a ground state with bounded support.}
From Step 2, we see that the two intersection points $(x_\pm (t),\theta)$ between $u_{\sigma_*}(\cdot,t)$ and $\theta$ satisfies
\begin{equation}\label{-M<x<M}
-M<x_-(t) <x_+(t)<M,\qquad t>0,
\end{equation}
for some $M>0$.

Now we consider an auxiliary supersolution. Choose a bistable nonlinearity $g^0(u)$ such that
$$
g^0(u)<0 \mbox{ in } (0,\theta),\quad
g^0(u)>0 \mbox{ in } (\theta, \kappa^0),\quad
(g^0)'(0)<0<(g^0)'(\theta),
$$
and
$$
f(x,u)[\kappa(x)-u]\leq g^0(u),\qquad x\in \R,\ u\geq 0.
$$
Consider the ground state solution of $[A(u)]_{xx}+g^0(u)=0,\ x\in \R$. By \cite[sections 1 and 6]{LouZhou}, this problem has a Type II ground state solution $U^0(x)$ defined over $J_0:= (b^0_1,b^0_2)$, with
$$
\max\limits_{J_0} U^0(x) >\theta,\quad U^0(x)>0 \mbox{ in } J_0,\quad U^0(x)=0 \mbox{ in }\R\backslash J_0,\quad [\Psi(U^0)]'(b^0_i) =0\ (i=1,2).
$$
Assume that $x^0$ is the biggest $\theta$-level set of $U^0(x)$. Now we choose a large $M_1>M$ and put $u^*(x):= U^0(x+x^0-M_1)|_{x\geq M_1}$ from the very beginning. Using \eqref{-M<x<M} and the comparison principle we conclude that the right free boundary $r(t)$ of $u_{\sigma_*}(\cdot,t)$ is bounded. The left boundary is also bounded. Consequently, $w_*(x)$ has compact support. Since it is a stationary solution of the Cauchy problem of \eqref{E}, not only of the the equation \eqref{E}. Therefore, $[\Psi(w^*)]'(x)=0$ on the boundary of its support, that is, $w^*$ is a Type II ground state stationary solution (cf. \cite{LouZhou} for PMEs).

\medskip
\noindent
{\it Proof of Proposition \ref{prop:p>kappa} for the bistable case}. Recall that the periodic traveling sharp wave $U(x,t)$ we obtained in the previous section moves rightward with speed bigger than $\delta^*$. Now, we put the Type II ground state solution $w_*(x)$ on the right of the free boundary $R(t)$ of $U(x,t)$, then $R(t)$ meets the left boundary of $w_*$ at some time $t_1$, then crosses the support of $w_*$ in time interval $(t_1,t_2)$, and finally passes the right boundary of $w_*$ at time $t_2$. Since both $U(x,t)$ and $w_*(x)$ solve the same equation \eqref{E}, we can use the intersection number properties in section 3 to conclude that the number $Z^0(t)$ of the intersection points between them satisfies $Z^0(t)=1$ for $t\in [t_1,t_2]$, and $Z^0(t)=0$ for $t>t_2$. This implies that the limit function $p(x)$ in Theorem \ref{thm:main1} (which is a positive periodic stationary solution) satisfies $p(x)\geq \max U^0(x) >\theta$. Consequently, $p\in \calS$.
\hfill \qed

\subsection{Exponential decay of $p(x)-U(x,t)$ as $x\to -\infty$}
{\it Proof of the inequalities \eqref{U-to-p-exp}}. The first inequality follows from the strong maximum principle. We now consider the exponential decay rate. We only need to prove it for $t\in [0,T]$:
\begin{equation}\label{U-to-p-exp-r}
p(x)-U(x,s)\leq M_* e^{\delta_* x}, \qquad x\in \R,\ s\in [0,T).
\end{equation}
In fact, if this is true, then for any $t\in \R$, we have $t=kT+s$ for some integer $k$ and some $s\in [0,T)$. Then, due to the periodicity of $U(x,t)$ we have
\begin{eqnarray*}
p(x)-U(x,t) & = & p(x)-U(x,kT+s)= p(x-kL)-U(x-kL,s)\\
& \leq & M_* e^{\delta_* (x-k L)}  \leq  M_* e^{\delta_* L}e^{\delta_* (x-c^* t)}.
\end{eqnarray*}

We now prove \eqref{U-to-p-exp-r}. For $\varepsilon_1>0$ sufficiently small, we see that \eqref{Fu<0} holds not only in $I_0$, but also in $I_{\varepsilon_1}:= [\kappa_0-\varepsilon_1, \kappa^0+\varepsilon_1]$. Since $0<p(x)-U(x,s)\to 0$ as $x\to -\infty$, there exists $X>0$ such that
$$
0<p(x)- U(x,t) <\varepsilon_1,\qquad x\leq -X,\ t\geq 0.
$$
Set
$$
\eta_1:= A(p(x))- A(U(x,t)).
$$
Then $\eta_1$ satisfies
\begin{equation}\label{eta2-p}
\left\{
   \begin{array}{ll}
   \eta_{1t}= a_1 (x, t)\eta_{1xx}+ c_1(x, t)\eta_1, & x\leq -X,\ t>0,\\
   \eta_1 (-X, t)= A(p(-X))- A(U(-X,t)), & t>0,\\
   \eta_1 (x, 0)= A(p(x))- A(U(x,0)), & x\leq -X.
   \end{array}
   \right.
\end{equation}
where for some $\theta_1, \theta_2 \in (0,1)$,
$$
a_1 (x, t)= A'(U(x,t)) \ \mbox{ and }\ c_1 (x, t)= \frac{A'(U(x,t))}{A'(\theta_1 p+ (1-\theta_1)U(x,t))} F_u (x, \theta_2 p+ (1-\theta_2)U(x,t)).
$$
By \eqref{ass-A} and \eqref{F1}, when $\varepsilon\in (0,\varepsilon_0)$ for sufficiently small $\varepsilon_0$, we have
\begin{equation}\label{def-a*a*}
\left\{
   \begin{array}{ll}
   a_* := \min\limits_{[\kappa_0-\varepsilon_0, \kappa^0+\varepsilon_0]} A'(r) \leq a_1 (x, t)\leq a^*:=\max\limits_{[\kappa_0-\varepsilon_0, \kappa^0+\varepsilon_0]} A'(r) , \\
   c_1 (x, t)\leq -\lambda := \frac{a_*}{a^*} \max\limits_{r\in[\kappa_0-\varepsilon_0, \kappa^0+\varepsilon_0]} F_r(x, r)<0.
   \end{array}
   \right.
\end{equation}
Denote $\alpha:= \Big( \frac{\lambda}{a^*}\Big)^{1/2}$. Then a direct calculation shows that
$$
\bar{\eta}_1 (x,t) := a^* \varepsilon_1 \Big( e^{\alpha (x+X)} + e^{-\lambda t} \Big),\qquad x\leq -X,\ t>0
$$
is a supersolution of \eqref{eta2-p}. Thus,
$$
p(x)-U(x,t) \leq \frac{1}{a_*} \eta_1(x,t) \leq \frac{1}{a_*}\bar{\eta}_1(x,t),\qquad x\leq -X,\ t>0.
$$

For any $s\in [0,T]$ and any $x<-4L-2X$, there exists a positive integer $k$ and a $y\in [0,2L)$ such that $x= y- 2kL$, and $y-kL<-X$. Thus
\begin{eqnarray*}
p(x) -U(x,s) & = & p(y-2kL) -U(y-2kL,s)  =  p(y-kL) - U(y-kL,s+kT) \\
 & \leq & M_4 \left(e^{\alpha(y-kL)} + e^{-\lambda (s+kT)} \right)\quad \mbox{ with }M_4:= \frac{a^* \varepsilon_1}{a_*} e^{\alpha X}\\
 & = & M_4 \left( e^{\frac{\alpha(x+y)}{2}} + e^{-\lambda s +\frac{\lambda T}{2L} (x-y)}\right)\\
 & \leq & M_5 \left( e^{\frac{\alpha}{2} x} + e^{\frac{\lambda }{2c^*}x}\right), \quad \mbox{ with } M_5 := M_4 e^{\alpha L}.
 \end{eqnarray*}
This proves \eqref{U-to-p-exp-r}. \hfill \qed

%%%%%%%%%%%%%%%%%%%%%
\section{Asymptotic Behavior of Spreading Solutions}
In this section, we focus on monostable, bistable and combustion equations and proof Theorem \ref{thm:Cauchy}, that is, using the periodic sharp wave to characterize the spreading solutions.
We first construct some periodic traveling wave solutions with compact supports for \eqref{E}, and then use them to give the exponential estimates for spreading solutions near $x=0$.
These estimates allow us to split a spreading solution down the middle into left and right halves, and then estimate them using super- and sub-solutions constructed by the sharp wave.

Throughout this section, we assume \eqref{F1}, and so $\calS$ has exactly one element $p(x)$. We also assume $f$ is of Case (a), (b) or (c). Thus the left infinite limit of the sharp wave $U(x,t)$ is nothing but $p(x)$.

\subsection{Periodic traveling waves with compact supports}
Besides the periodic sharp wave, we construct some periodic traveling waves with compact supported profiles, which will be used as lower solutions in comparison.

\begin{thm}\label{thm:TW1}
Suppose that the assumptions in Theorem \ref{thm:Cauchy} hold. Given any small $c>0$ and any $d\in (0,\kappa_0)$ sufficiently close to $\kappa_0$, for any large $l$, let $\varphi (x;c)$ be the solution of \eqref{TW-q} in \eqref{prop:compact-TW} whose support is $[-l,l]$. Then the equation \eqref{E} has a solution $U_r (x,t;l)$:
\begin{equation}\label{U>q}
\left\{
 \begin{array}{l}
 U_r \mbox{ is a periodic traveling wave}:\ U_r (x-L,t;l)\equiv U_r \Big(x,t+\frac{L}{c};l\Big),\\
 p(x)> U_r (x,t;l)> \varphi (x-ct;c),\qquad ct-l< x< ct+l,\\
 U_r \mbox{ has compact support}:\ U_r(ct-l,t;l)=U_r(ct+l,t;l)=0, \\
 -[\Psi(U_r)]_x(ct+l -0,t) >c.
 \end{array}
 \right.
\end{equation}
Furthermore, for any $M>0$, there holds
\begin{equation}\label{Ul-to-p}
\max\limits_{ct-M\leq x\leq ct+M} |U_r(x,t;l)-p(x)|\to 0\quad \mbox{as}\quad l\to \infty.
\end{equation}

Similarly, there is a leftward moving periodic traveling wave $U_l(x,t;l)$ satisfying $U_l(x+L,t;l)\equiv U_l\Big(x,t+\frac{L}{c};l\Big)$ and other properties as above.
\end{thm}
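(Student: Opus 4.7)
The plan is to build $U_r$ by a Poincar\'e-map / monotone-iteration argument in the moving frame. Setting $\xi = x - ct$, I seek an $L/c$-periodic-in-$t$ classical solution $w(\xi,t)$ of the IBVP
\begin{equation*}
w_t = c w_\xi + [A(w)]_{\xi\xi} + f(\xi + ct, w)\bigl(\kappa(\xi + ct) - w\bigr), \quad (\xi,t) \in (-l,l)\times\R, \qquad w(\pm l, t) = 0,
\end{equation*}
whose coefficients are $L/c$-periodic in $t$, and then put $U_r(x,t;l) := w(x-ct, t)$. The natural sub/super pair is $w_-(\xi) := \varphi(\xi;c)$ and $w_+(\xi,t) := p(\xi + ct)$: that $w_-$ is a (stationary) subsolution follows from \eqref{TW-q} together with $f(x,u)[\kappa(x)-u] \geq g_0(u)$ in \eqref{F}; that $w_+$ is a supersolution is immediate since $p$ is a stationary solution of \eqref{E} (its boundary values $p(\pm l + ct) > 0$ only help), and $w_+$ is $L/c$-periodic in $t$ by the $L$-periodicity of $p$. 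Moreover $w_- < w_+$ on $[-l,l]$ since $\max \varphi(\cdot;c) < \kappa_0 \leq p(\cdot)$.

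Define the Poincar\'e map $\Pi(w_0) := w(\cdot, L/c; w_0)$. The comparison principle for \eqref{E} makes $\Pi$ order preserving; since $\varphi$ is a subsolution one has $\Pi(\varphi) \geq \varphi$, while the supersolution property of $w_+$ gives $\Pi(w_0) \leq p$ whenever $w_0 \leq p$. Hence the iterates $\Pi^n(\varphi)$ form a nondecreasing sequence bounded above by $p$; interior parabolic regularity (valid since $\Pi^n(\varphi) \geq \varphi > 0$ on compact subsets of $(-l,l)$) makes $\Pi$ continuous on this order interval, so the pointwise limit $w^*$ is a fixed point of $\Pi$. Then $W(\xi,t) := w(\xi, t; w^*)$ is $L/c$-periodic in $t$, and $U_r(x,t;l) := W(x-ct, t)$ has support $\{ct-l \leq x \leq ct+l\}$ and satisfies the translation symmetry $U_r(x-L,t;l) \equiv U_r(x, t+L/c; l)$.

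For the remaining conditions in \eqref{U>q}, the strong maximum principle on the interior (nondegenerate there since $W \geq \varphi > 0$) upgrades $\varphi \leq W \leq p(\cdot+ct)$ to strict inequalities. For the Darcy-type bound, $\Psi(W) \geq \Psi(\varphi)$ on $[-l,l]$ with both sides vanishing at $\xi = l$, so a one-sided Taylor expansion gives $[\Psi(W)]_\xi(l-0, t) \leq [\Psi(\varphi)]_\xi(l-0; c)$, whence $-[\Psi(U_r)]_x(ct+l-0, t) \geq -[\Psi(\varphi)]_\xi(l-0; c) > c$ by \eqref{homo-sub-TW}.

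For the asymptotic \eqref{Ul-to-p}: when $l' > l$, the zero extension of $U_r(\cdot,\cdot;l)$ is a subsolution for the $l'$-problem (the Darcy inequality $-[\Psi(U_r)]_x > c$ at the interfaces $\xi = \pm l$ is precisely the Stefan-type subsolution condition), so monotone iteration yields $U_r(\cdot,\cdot;l) \leq U_r(\cdot,\cdot;l')$. The monotone limit $U^\infty$ is an entire classical solution of \eqref{E} pulsating with speed $c$ in the sense $U^\infty(x-L,t) \equiv U^\infty(x, t+L/c)$, and is sandwiched between the limit lower barrier (which approaches the plateau $\kappa_0$ in the bulk as $l \to \infty$) and $p(x)$. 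The main obstacle is identifying $U^\infty \equiv p$: since $c$ is smaller than the average speed $c^*$ of the sharp wave of Theorem \ref{thm:main1}, no nontrivial pulsating travelling wave of speed $c$ can live in the range $[\kappa_0, p]$; rigorously this is ruled out by a zero-number comparison between $U^\infty$ and appropriate time shifts of the sharp wave $U(x,t)$ via Proposition \ref{prop:inter}, giving $U^\infty \equiv p$ and hence \eqref{Ul-to-p}. The leftward wave $U_l$ is produced by the symmetric argument in the frame $\xi = x + ct$, using the leftward subsolution of Remark \ref{rem:easy-spread}.
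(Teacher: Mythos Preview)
Your construction of $U_r$ by Poincar\'e-map iteration from the subsolution $\varphi(\cdot;c)$ is a legitimate alternative to the paper's method and is in some sense more elementary: the paper instead solves the moving-frame IBVP with constant initial datum $\kappa_0$ and invokes the zero-number convergence result of Brunovsk\'y--Pol\'a\v{c}ik--Sandstede for time-periodic problems to extract an $L/c$-periodic limit. Both routes land on an $L/c$-periodic solution $W$ of the moving-frame problem trapped between $\varphi$ and $p$, and your treatment of the strict inequalities and of the Darcy-type bound at $\xi=l$ is essentially the paper's.

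There are, however, two genuine gaps in your argument for \eqref{Ul-to-p}.

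\emph{Monotonicity in $l$.} You define $U_r(\cdot;l)$ as the \emph{minimal} periodic solution above $\varphi(\cdot;c)$ with support $[-l,l]$, and then observe that its zero extension is a periodic subsolution of the $l'$-problem. But iterating the $l'$-Poincar\'e map from this extension produces \emph{some} periodic solution of the $l'$-problem, not a priori the minimal one above the $l'$-profile $\varphi$; without a uniqueness statement for periodic solutions of the IBVP you cannot conclude $U_r(\cdot;l)\leq U_r(\cdot;l')$. The paper sidesteps this by starting every IBVP from the $l$-independent constant $\kappa_0$: direct comparison on nested domains then gives $w(\cdot,t;l_1)\leq w(\cdot,t;l_2)$ for all $t$, and the inequality survives the $t\to\infty$ limit.

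\emph{Identification of the limit.} Your proposed identification $U^\infty\equiv p$ via a zero-number comparison with the sharp wave of Theorem~\ref{thm:main1} is not carried out, and it is unclear how Proposition~\ref{prop:inter} would apply since $U^\infty$ has no free boundary. The assertion that the lower barrier ``approaches the plateau $\kappa_0$'' is also unjustified (recall $\max\varphi<\kappa_0$). The paper's argument is direct and does not use the sharp wave at all: from $\mathcal{W}\geq\varphi$ one \emph{slides} the compactly supported subsolution $\varphi$ under $\mathcal{W}$ along the $z$-axis to obtain $\mathcal{W}(z,t)\geq d:=\max\varphi$ for all $z,t$; then $\mathcal{U}(x,t):=\mathcal{W}(x-ct,t)$ is an entire solution of \eqref{E} dominating the Cauchy solution $u(\cdot,t;d)$, which under \eqref{F} and \eqref{F1} increases to the unique $p\in\calS$. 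This gives $\mathcal{U}\geq p$, and together with $\mathcal{U}\leq p$ (from $W\leq p$) forces $\mathcal{U}\equiv p$.
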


\begin{proof}
Consider the initial-boundary value problem
\begin{equation}\label{IBVP}
\left\{
\begin{array}{ll}
w_t = [A(w)]_{zz} + cw_z +f(z+ct,w)[\kappa(z+ct)-w], & -l<z<l,\ t>0,\\
w(-l, t)= w(l,t)=0, & t>0,\\
w(z,0)= \kappa_0, & -l\leq z\leq l.
\end{array}
\right.
\end{equation}
By the theory of nonlinear diffusion equations (cf. \cite{Sacks,Vaz-book, WuYin-book}), this problem has a unique solution $w(z,t;l)$.
Since $\varphi(z;c)$ is a time-independent subsolution of \eqref{IBVP} we have
\begin{equation}\label{wl>varphi}
w(z,t;l)> \varphi(z;c),\qquad z\in (-l, l),\ t>0.
\end{equation}
This implies that $w(z,t;l)$ is classical in $(-l,l)\times (0,\infty)$. Note that both $f(z+ct,w)$ and $\kappa(z+ct)$ are $\frac{L}{c}$-periodic functions in $t$. So we can use the zero number argument as in \cite{BPS} for time-periodic problems to conclude that $w (z,t;l)$ converges as $t\to \infty$ to some $\frac{L}{c}$-periodic solution $W(z,t;l)$, with $W(z,t;l)>\varphi(z;c)$ in $(-l, l)$. (Note that, in \cite{BPS}, the authors used the zero number argument which is applicable for non-degenerate equations as one can see in section 2.)

Define $U_r(x,t;l):= W(x-ct,t;l)$. Then $U_r (x-L,t;l)=U_r (x,t+\frac{L}{c};l)$, and so $U_r(x,t;l)$ is a periodic traveling wave solution of the equation \eqref{E}, with support $[ct-l,ct+l]$. We remark  that $U_r$ is not a solution of the Cauchy problem of \eqref{E} since it does not satisfy the Darcy's law on the right boundary $ct+l$:
 $$
 - [\Psi(U_r)]'(ct+l-0,t) \geq - [\Psi(\varphi)]'(0-0)>c.
 $$

To complete the proof of \eqref{U>q}, it is left to show $p(x) >U_r(x,t;l)$. Note that $\kappa^0$ is a supersolution of the problem \eqref{IBVP}, this implies that $w(z,t;l)\leq \kappa^0$, so does $W(z,t;l)$. Then $U_r(x,t;l)\leq \kappa^0$ for all $x\in [ct-l, ct+l],\ t\in \R$.
Let $\bar{u}(x,t;\kappa^0)$ be the solution of \eqref{E} with initial data $\kappa^0$, then $\bar{u}$ is decreasing in $t$ and converges as $t\to \infty$ to the unique element $p$ in $\calS$. This implies that $p(x)\geq U_r(x,t;l)$. The strict inequality follows from the strong maximum principle.

\medskip
Next we study the limit of $U_r(x,t;l)$ in \eqref{Ul-to-p}.
We first explain that $W(x,t;l)$ is increasing in $l$. In fact, if $l_2 >l_1$, then by comparison we have $w(z,t;l_1)\leq w(z,t;l_2)$. Taking limits as $t\to \infty$ we have $W(z,t;l_1)\leq W(z,t;l_2)$.
Note that $W(z,t;l)$ is positive and so classical except for the boundary points. Then, there exists a positive function $\mathcal{W}(z,t)$ such that, for any fixed $M>0$, there holds
\begin{equation}\label{Wl-to-calW}
\lim\limits_{l\to \infty}  \|W(\cdot,t;l)-\mathcal{W}(\cdot,t)\|_{C^2([-M,M])}=0.
\end{equation}
This implies that $\mathcal{W}(z,t)$ is a solution of the equation in \eqref{IBVP}, it is $\frac{L}{c}$-periodic in $t$. In addition, by \eqref{wl>varphi} we have
$$
\mathcal{W}(z,t)\geq \varphi(z;c),\qquad z\in [-l, l].
$$
Note that $\varphi(z;c)$ is a time-independent subsolution of the equation in \eqref{IBVP}, sliding it below $\mathcal{W}(z,t)$ along the $z$-axis we conclude that
$$
\mathcal{W}(z,t) \geq \max\limits_{[-l,l]} \varphi(z;c) \geq d,\qquad z,t\in \R.
$$
This implies that $\mathcal{U}(x,t):= \mathcal{W}(x-ct,t)$ is an entire solution of \eqref{E} and it is bigger than $d$ everywhere. By comparison we have
$$
\mathcal{U}(x,t) \geq u(x,t;d),\qquad x\in \R, t>0.
$$
Since $u(x,t;d)$ is increasing in $t$ and converges as $t\to \infty$ to the unique element $p(x)$ of $\calS$, we conclude that $\mathcal{U}(x,t)\geq p(x)$. On the other hand, the second line in \eqref{U>q} and \eqref{Wl-to-calW} imply that $\mathcal{U}(x,t)\leq p(x)$. Thus $\mathcal{U}(x,t)\equiv p(x)$, and \eqref{Wl-to-calW} can be rewritten as
\eqref{Ul-to-p}.

The leftward moving periodic wave $U_l(x,t;l)$ can be constructed similarly.
\end{proof}

\begin{remark}\rm
This theorem gives periodic traveling waves $U_r$ moving rightward. Note that, for each $U_r$, its shape changes periodically, but its support $[ct-l,ct+l]$ moves rightward with a constant speed $c$.
\end{remark}

\subsection{Exponential estimates near $x=0$}
Since the spreading solutions we are concerned have compact supports (this is different from the solutions with Heaviside or traveling wave type of profiles), to give a convergence result near the two free boundaries as in Fife and McLeod \cite{FM}, we have to cut the solution at $x=0$ into two parts, and estimate them separately. For this purpose, a good exponential convergence estimate for $u(0,t)-p(0)$ is needed. This is a curial part in such a process. We will use the periodic traveling waves with compact supports to deal with this problem.

\begin{lem}\label{lem:conve to P}
Let $u$ be a spreading solution of \eqref{E} as in Theorem \ref{thm:Cauchy}. Then for any small $c>0$, there exist $\delta> 0$ and $M$, $T_*> 0$ such that
\begin{itemize}
\item[(i)] $u(x, t)\geq p(x)- Me^{-\delta t}$ for all $x\in (-ct, ct)$ and $t\geq T_*$.
\item[(ii)] $u(x, t)\leq p(x)+ Me^{-\delta t}$ for all $t\geq T_*$.
\end{itemize}
\end{lem}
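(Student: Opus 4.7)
The overall strategy combines three ingredients already in hand: the linear stability of $p$ furnished by \eqref{F1}, the compactly supported periodic traveling waves $U_r, U_l$ from Theorem \ref{thm:TW1} (which force expanding spatial coverage of the support of $u$), and the sharp-wave exponential estimate $p-U \leq M_1 e^{\delta_1(x-c^*t)}$ from Proposition \ref{prop:p>kappa} (which supplies the quantitative rate in $t$). Part (ii) will use a global supersolution; part (i) will combine a coverage estimate on the support of $u$ with a subsolution argument on the moving window $(-ct,ct)$, plus a sharp-wave lower bound at the lateral boundary.

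For (ii), the linearization at $p$ is the operator $L\psi := [A'(p)\psi]_{xx} + F_u(x,p)\psi$ with $L$-periodic boundary conditions. Since \eqref{F1} gives $F_u(x,p(x)) \leq -\lambda_0 < 0$ uniformly, Krein--Rutman yields a positive periodic eigenfunction $\phi$ with principal eigenvalue $-\mu < 0$. Taking $\bar u(x,t) := p(x) + Me^{-\delta t}\phi(x)$, a Taylor expansion of $[A(\bar u)]_{xx}+F(x,\bar u)$ around $p$, together with $L\phi = -\mu\phi$, yields $\bar u_t - [A(\bar u)]_{xx} - F(x,\bar u) \geq (\mu - \delta)Me^{-\delta t}\phi - C M^2 e^{-2\delta t}$, which is nonnegative for any $\delta \in (0,\mu)$ once $t \geq T_*$ is large enough to absorb the quadratic term. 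Since $u$ is globally bounded, $M$ can be chosen so that $\bar u(\cdot,T_*) \geq u(\cdot,T_*)$ pointwise, and the parabolic comparison principle delivers (ii).

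For (i), I would first establish support coverage. Pick $c' \in (c,c^*)$, where $c^* = L/T$ is the sharp wave's average speed. Theorem \ref{thm:TW1} supplies a rightward TW $U_r(\cdot,\cdot;l)$ of speed $c'$ and length $2l$ with $U_r(\cdot,0;l) \to p$ uniformly on compacts as $l \to \infty$. For $l$ large, the spreading hypothesis and the uniformly positive interior values of $u(\cdot,T_1)$ give $u(x,T_1) \geq U_r(x,0;l)$ pointwise for some $T_1$ (at $x = \pm l$ one has $U_r = 0 < u$, while in the interior both are close to $p$ with $U_r \leq p$). Comparison (using the strict Darcy inequality in Theorem \ref{thm:TW1}, which makes $U_r$ a genuine subsolution of \eqref{E}) yields $u(x,t) \geq U_r(x,t-T_1;l)$ on $[c'(t-T_1)-l,\ c'(t-T_1)+l]$, hence $r(t) \geq ct$ for $t \geq T_*$ once $l > c'T_1$; symmetrically $l(t) \leq -ct$. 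Next, on the moving window $\{-ct < x < ct\}$, I would dominate $u$ from below by the subsolution $\underline u(x,t) := p(x) - Me^{-\delta t}\phi(x)$ (verified as in (ii) with the opposite sign). The initial inequality $u \geq \underline u$ at $t=T_*$ on $[-cT_*,cT_*]$ holds for $M$ large. At the lateral boundary $x = \pm ct$ I would invoke the sharp wave $U(x,t)$ from Theorem \ref{thm:main1}: a steeper/intersection-number setup (Proposition \ref{prop:inter}(ii)) at time $T_*$ with matched free boundaries $R(T_*-T_+) = r(T_*)$ gives $u(x,t) \geq U(x,t-T_+)$ on $[0,r(t)]$ for all $t \geq T_*$ and some fixed shift $T_+$; Proposition \ref{prop:p>kappa} then implies $p(x) - u(x,t) \leq M_1 e^{\delta_1 c^* T_+} e^{-\delta_1(c^*t-x)}$, which at $x = ct$ is of the form $Me^{-\delta t}$ with $\delta = \delta_1(c^*-c)$. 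A mirror-image construction with a leftward sharp wave handles $x = -ct$, and comparison on the moving window closes the argument.

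The most delicate step is setting up the sharp-wave lower bound $u(\cdot,T_*) \geq U(\cdot,T_*-T_+)$ on a one-sided domain via intersection numbers, since $u$ has compact support whereas $U$ extends to $-\infty$: the comparison domain and the shift $T_+$ must be chosen carefully so that the right free boundaries coincide and the steeper ordering is preserved by the degenerate evolution. A secondary technical point is verifying the strict negativity $\mu > 0$ of the principal eigenvalue of $L$ at a heterogeneous $p$; this is where the smallness $\kappa^0 - \kappa_0 \ll 1$ in \eqref{H} likely enters, ensuring the cross terms arising from $A''(p)(p')^2 + A'(p)p''$ do not overwhelm the reaction coercivity $\lambda_0$ from \eqref{F1}.
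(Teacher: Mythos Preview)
Your approach to (ii) is workable but more elaborate than needed; the paper simply compares $u$ with the spatially homogeneous solution $u(x,t;\kappa^0)$, which decreases monotonically to $p$, and \eqref{F1} alone yields the exponential rate. Krein--Rutman and the hypothesis \eqref{H} are not invoked here.

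For (i) there is a genuine gap. Your lateral boundary estimate at $x=\pm ct$ relies on establishing $u(x,t)\geq U(x,t-T_+)$ on $[0,r(t)]$ via an intersection-number argument, but this cannot be set up as stated. Proposition~\ref{prop:inter} concerns solutions with $l_1=l_2=-\infty$, whereas $u$ has compact support; and if you instead restrict to $[0,r(t)]$ you must supply the boundary inequality $u(0,t)\geq U(0,t-T_+)$. Since $U(0,t-T_+)=p(0)-O(e^{-\delta_1 c^*(t-T_+)})$ by Proposition~\ref{prop:p>kappa}, this boundary inequality is essentially the exponential convergence of $u(0,t)$ to $p(0)$ that you are trying to prove. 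The argument is circular, and matching the right free boundaries does not break the circle because on the far left $U>0=u$, so no global ordering is available to bootstrap from.

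The paper avoids this circularity by extracting more from the compactly supported waves $U_r,U_l$ than mere support coverage. A sliding argument (launching $U_r$ from time $T_1+\tau-x_1/c$ so that its center reaches $x_1$ at time $T_1+\tau$) shows that $u(x,T_1+\tau)\geq p(x)-\varepsilon$ on the whole interval $[-c\tau,c\tau]$, for \emph{every} $\tau>0$. One then fixes $\tau$, solves an auxiliary problem on the \emph{fixed} interval $[-c\tau,c\tau]$ with boundary data $p(\pm c\tau)-\varepsilon$ and initial data $p-\varepsilon$, and passes to the linear equation for $\eta_1=A(p)-A(\underline u_1)$. A heat-kernel estimate (after majorizing by a constant-coefficient problem, as in \cite[Lemma~6.5]{DuLou}) gives $\eta_1\leq C e^{-\lambda s_*\tau}$ on the slightly smaller interval $|x|\leq(1-\varepsilon_1)c\tau$ at time $s_*\tau$; reparametrizing $t=(1+s_*)\tau$ yields the claim. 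The point is that the $\varepsilon$-sized boundary error does not need to be exponentially small a priori: the interior linear dissipation converts it into an exponentially small interior error after time proportional to $\tau$. Your subsolution $p-Me^{-\delta t}\phi$ on a moving window demands exponentially small lateral data from the outset, which is why it leads back to the conclusion.
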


\begin{proof}
(i). For any given small $\varepsilon > 0$ and any $M>0$, by the previous lemma, there exists $l>0$ such that
\begin{equation}\label{UrUl>p-ep}
 U_r(x,t;l)\geq p(x)-\varepsilon,\quad
 U_l(x,t;l)\geq p(x)-\varepsilon\quad \mbox{ for } |x-ct|\leq M.
\end{equation}
Since $u(x,t)$ is a spreading solution, there exists $T_1>0$ such that, for all $t\geq T_1$, there holds
\begin{equation}\label{u>UrUl}
u(x,t)\geq U_r(x,0;l) \mbox{\ \ and\ \ }
u(x,t)\geq U_l(x,0;l) \quad \mbox{ for all } |x|\leq l.
\end{equation}

\noindent
{\bf Claim 1}. For any $\tau >0$, there holds,
\begin{equation}\label{u-near-p}
u(x,T_1 +\tau) \geq p(x)-\varepsilon,\qquad x\in [-c\tau, c\tau].
\end{equation}

In fact, for any $\tau>0$ and $x_1\in [0,c\tau]$ (the left side is proved similarly), set $t_1:= \frac{x_1}{c}\in [0,\tau]$, then $T'_1 := T_1 + \tau - t_1\geq T_1$. Hence \eqref{u>UrUl} holds for $t=T'_1$. By comparison we have
$$
u(x,T'_1+t)\geq U_r(x,t;l),\qquad |x-ct|\leq l,\ t>0.
$$
In particular, at $x=x_1$ and $t=t_1$ we have
$$
u(x_1,T_1+\tau) = u(x_1,T'_1+t_1) \geq U_r(x_1,t_1;l)\geq p(x_1)-\varepsilon.
$$
This proves the claim.

Now fix $\tau>0$ and consider
\begin{equation}\label{sol_in_-ct-M,+ct+M}
\left\{
   \begin{array}{ll}
   \underline{u}_{1t}= [A(\underline{u}_1)]_{xx}+ F(x,\underline{u}_1), & |x|\leq c\tau, \ t>0,\\
   \underline{u}_1(\pm c\tau, t)\equiv p(\pm c \tau)- \varepsilon, & t>0,\\
   \underline{u}_1(x, 0)= p(x)-\varepsilon, & |x|\leq c\tau,
   \end{array}
   \right.
\end{equation}
where $F(x,u) := f(x,u)[\kappa(x)-u]$. By Claim 1 and by comparison we have
$$
\kappa_0 - \varepsilon \leq \underline{u}_1(x, t) \leq u(x, T_1 +\tau+t) \leq \kappa^0+\varepsilon, \quad |x|\leq c \tau,\ t>0.
$$
Set
$$
\eta_2:= A(p(x))- A(\underline{u}_1(x, t)).
$$
Then $\eta_2$ satisfies
\begin{equation}\label{Pm-u1m}
\left\{
   \begin{array}{ll}
   \eta_{2t}= a_2 (x, t)\eta_{2xx}+ c_2 (x, t)\eta_2, & |x|\leq c\tau, \ t>0,\\
   \eta_2(\pm c\tau, t)= A(p(\pm c\tau))- A(p(\pm c \tau)- \varepsilon), & t>0,\\
   \eta_2(x, 0)= A(p(x))- A(p(x)-\varepsilon), & |x|\leq c \tau,
   \end{array}
   \right.
\end{equation}
where for some $\theta_3, \theta_4 \in (0,1)$,
$$
a_2 (x, t)= A'(\underline{u}_1) \ \mbox{ and }\ c_2 (x, t)= \frac{A'(\underline{u}_1)}{A'(\theta_3 p+ (1-\theta_3)\underline{u}_1)} F_u (x, \theta_4 p+ (1-\theta_4)\underline{u}_1).
$$
It is still difficult to calculate $\eta_2$ precisely. But we only need to give the upper bound of $\eta_2$. Let $\lambda,\ a_*$ and $a^*$ be the constants in \eqref{def-a*a*}.
Instead of \eqref{Pm-u1m} we consider the following problem
\begin{equation}\label{modi:Pm-u1m}
\left\{
   \begin{array}{ll}
   \eta_{t}= a^*\eta_{xx}-\lambda \eta, & |x|\leq c\tau, \ t>0,\\
   \eta(\pm c\tau, t)= a^* \varepsilon, & t>0,\\
   \eta(x, 0)= a^* \varepsilon, & |x|\leq c\tau.
   \end{array}
   \right.
\end{equation}
It is easily seen that both $\eta$ and $\eta_{xx}$ are positive in $Q_{\tau}:= [-c\tau,c \tau]\times (0,\infty)$. Hence, $\eta$ is a supersolution of \eqref{Pm-u1m}, and $\eta_2(x,t)\leq \eta(x,t)$ in $Q_T$.
Using a similar calculation as in the proof of Du and Lou \cite[Lemma 6.5]{DuLou}, one can show that, for small $\varepsilon_1>0$,
$$
\eta (x,s_* \tau) \leq a^*\varepsilon \left(\frac{4}{\sqrt{\pi}}+1\right)e^{- \lambda s_* \tau}, \qquad |x|\leq (1-\varepsilon_1)c\tau.
$$
where $s_* :=\frac{\varepsilon^2_1 c^2}{4a^*}$.
Using
$$
a_* [p(x)-\underline{u}_1(x, t)]\leq \eta_2(x, t):= A(p(x))- A(\underline{u}_1(x, t))\leq \eta(x, t),
$$
we conclude that
$$
u(x, T_1 +\tau + s_* \tau)\geq \underline{u}_1(x, s_* \tau) \geq p(x)-  \frac{a^*\varepsilon}{a_*} \left(\frac{4}{\sqrt{\pi}}+1\right)e^{-\lambda s_* \tau}, \qquad |x|\leq (1-\varepsilon_1)c \tau.
$$
Rewrite $t= \tau+ s_* \tau$, then we have
$$
u(x,T_1 + t)\geq p(x)- M_3 e^{-\delta t}, \qquad  |x|\leq c_1 t,\ t>0,
$$
where
$$
M_3 := \frac{a^*\varepsilon}{a_*} \left(\frac{4}{\sqrt{\pi}}+1\right),
\quad
\delta:=  \frac{\lambda s_*}{1+s_*},\quad c_1 :=
\frac{1-\varepsilon_1}{1+s_*} c,
$$
and so the estimate in (i) holds for $t> T_1$, $M_2 =M_3 e^{\delta T_1}$, and $\delta_2 =\delta$.

(ii). Using the hypothesis \eqref{F1}, the estimate from above in (ii) is much easier by considering $u(x,t;\kappa^0)$.

This completes the proof of the lemma.
\end{proof}

\subsection{Convergence of spreading solutions to the sharp wave}
In this last part, we use the sharp wave to characterize the spreading solutions.

We first explain that the additional conditions \eqref{F1} and \eqref{H} indicate some monotonicity properties for the functions $B(v)$ and $h(x,v)$ defined in subsection 2.1. Note that the condition \eqref{H} is only used here.

\begin{lem}\label{lem:increasing}
Assume \eqref{ass-A}, \eqref{F}, \eqref{F1} and \eqref{H}. Then
\begin{equation}\label{B/v-increasing}
\frac{B(v)}{v} \mbox{ is increasing in } v>0,
\end{equation}
and, there exists a small $\varepsilon_0>0$ such that
\begin{equation}\label{h/v-decreasing}
\frac{\partial}{\partial v} \left( \frac{h(x,v)}{vB(v)} \right) \leq -\lambda_0 <0 \mbox{ for } x\in \R,\ v\in J_\varepsilon:= [\Psi(\kappa_0)-\varepsilon_0, \Psi(\kappa^0)+\varepsilon_0].
\end{equation}
\end{lem}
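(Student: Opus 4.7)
The plan is to pull both statements back to the $u$-variable via $u=\psi(v)$, exploiting $\tfrac{du}{dv}=\psi'(v)=u/A'(u)>0$ (which is also bounded away from $0$ and $\infty$ on the compact set of interest), so that $v$-monotonicity is equivalent to $u$-monotonicity with a uniform comparison of derivatives.

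For \eqref{B/v-increasing}, from the definitions $B(v)/v=A'(u)/\Psi(u)$, and using $\Psi'(u)=A'(u)/u$ a direct computation gives
\[
\frac{d}{du}\!\left(\frac{A'(u)}{\Psi(u)}\right)=\frac{uA''(u)\Psi(u)-[A'(u)]^2}{u\,\Psi(u)^2}.
\]
The numerator is exactly the quantity in the first clause of \eqref{H}, so this derivative is $\ge 0$ and the first assertion follows at once.

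For \eqref{h/v-decreasing}, the first move is to simplify the ratio. Writing $F(x,u):=f(x,u)[\kappa(x)-u]$ and substituting $1/\psi'(v)=A'(u)/u$ into \eqref{def-h} gives $h=F\,A'(u)/u$, while $vB(v)=\Psi(u)A'(u)$; hence
\[
\frac{h(x,v)}{vB(v)}=\frac{F(x,u)}{u\,\Psi(u)}.
\]
Differentiating in $u$ I obtain
\[
\frac{d}{du}\!\left[\frac{F(x,u)}{u\,\Psi(u)}\right]=\frac{F_u(x,u)}{u\,\Psi(u)}-\frac{F(x,u)\bigl[\Psi(u)+A'(u)\bigr]}{u^2\,\Psi(u)^2}.
\]
Hypothesis \eqref{F1} combined with continuity gives $F_u(x,u)\le -\mu_0<0$ on a slight enlargement $[\kappa_0-\eta,\kappa^0+\eta]$ of $[\kappa_0,\kappa^0]$, so the first summand is bounded above by a strictly negative constant uniformly in $x$. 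The second summand has no fixed sign (since $F(x,\cdot)$ vanishes and changes sign at $\kappa(x)\in[\kappa_0,\kappa^0]$), but the mean-value theorem gives $|F(x,u)|\le C\,|u-\kappa(x)|\le C'(\kappa^0-\kappa_0+\eta)$ on the enlarged interval, so the second summand is $O(\kappa^0-\kappa_0+\eta)$ uniformly. Choosing $\eta$, then $\kappa^0-\kappa_0$, and finally $\varepsilon_0$ small enough (the last so that $J_\varepsilon$ pulls back under $\psi$ into $[\kappa_0-\eta,\kappa^0+\eta]$), the first summand dominates and gives $d/du[\cdots]\le -\lambda_0'<0$; multiplying by $du/dv>0$ transfers the bound to the $v$-derivative and yields \eqref{h/v-decreasing}.

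The only genuine obstacle is the sign-indefiniteness of $F$ on $[\kappa_0,\kappa^0]$: \eqref{F1} by itself cannot control the $F$-term since $F$ changes sign at $u=\kappa(x)$. The role of the smallness clause $\kappa^0-\kappa_0\ll 1$ in \eqref{H} is precisely to shrink this interval until the strictly negative $F_u$-contribution dominates the $F$-term.
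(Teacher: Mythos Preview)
Your proof is correct and follows essentially the same route as the paper: both arguments pull back to the $u$-variable via $B(v)/v=A'(u)/\Psi(u)$ and $h(x,v)/(vB(v))=F(x,u)/(u\Psi(u))$, differentiate, and then use the first clause of \eqref{H} for \eqref{B/v-increasing} and the combination of \eqref{F1} (giving $F_u\le -\mu_0<0$) with the smallness clause $\kappa^0-\kappa_0\ll 1$ (making the $F$-term negligible) for \eqref{h/v-decreasing}. The only cosmetic difference is that you split the $u$-derivative into two summands and then multiply by $du/dv$, whereas the paper writes the $v$-derivative as a single quotient; one minor phrasing point is that $\kappa^0-\kappa_0$ is a datum assumed small by \eqref{H} rather than a parameter you ``choose,'' but this does not affect the argument.
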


\begin{proof}
Differentiating the right hand side of $\frac{B(v)}{v} = \frac{A'(u)}{\Psi(u)}$ and using the first condition in \eqref{H} we obtain \eqref{B/v-increasing} easily.

Denote $F(x,u):= f(x,u)[\kappa(x)-u]$. Then
$$
\frac{h(x,v)}{vB(v)} = \frac{F(x,u) \Psi'(u)}{\Psi(u) A'(u)} =
\frac{F(x,u) }{u \Psi(u)} .
$$
So,
\begin{equation}\label{dh/dv}
\frac{\partial}{\partial v} \left( \frac{h(x,v)}{vB(v)} \right)  =
\frac{\partial}{\partial u} \left( \frac{F(x,u) }{u \Psi(u)}
\right) \cdot \frac{du}{dv}  =
\frac{F_u(x,u) u \Psi(u) - F(x,u) [\Psi(u)+A'(u)]}{u (\Psi(u))^2 A'(u)} .
\end{equation}
By \eqref{F1}, there exists $\lambda>0$ such that
\begin{equation}\label{Fu<0}
F_u(x,u)< -\lambda,\qquad x\in \R,\ u\in I_0 := [\kappa_0, \kappa^0].
\end{equation}
On the other hand, when the second condition $\kappa^0-\kappa_0\ll 1$ in \eqref{H} holds, we have
$$
|F(x,u)| \ll 1,\qquad x\in \R,\ u\in I_0.
$$
Thus, the right hand side of \eqref{dh/dv} is negative when $u\in I_0$. Consequently, \eqref{h/v-decreasing} holds when $\varepsilon_0$ is sufficiently small.
\end{proof}

\medskip
\noindent
{\it Proof of Theorem \ref{thm:Cauchy}}.
The conclusions (i) and (ii) are proved in Lemma \ref{lem:unique-S} and Lemma \ref{lem:conve to P}, respectively.

(iii). We will use the generalized pressure function and prove the analogues of \eqref{u-profile-right} and \eqref{right-exp}. The analogues of \eqref{u-profile-left} and \eqref{left-exp} are proved similarly. The proof is divided into several steps.

\vskip 2mm
\noindent
\underline{\it Step 1. Notations and preliminaries.} As before, denote
$$
V(x,t) := \Psi(U(x,t))\mbox{ with } V(0,0)=0,\qquad q(x):=\Psi(p(x)).
$$
Let $R(t)$ be the free boundary of $V(x,t)$, $c^*:= \frac{L}{T}$ be its average speed. Then $|R(t)-c^* t|\leq L$. Denote
$$
\kappa'_0 := \Psi(\kappa_0),\qquad \kappa'^0:= \Psi(\kappa^0).
$$

For convenience, we list some conclusions known from above, but in the form of $v$:
\begin{enumerate}[(a)]
\item Let $\varepsilon_0\leq \frac{\kappa'_0}{2}$ be the small constant in the previous lemma. By Theorem \ref{thm:Cauchy} (ii), for any given $\varepsilon\in (0,\varepsilon_0)$, there exists a large $z_0> 0$ such that
$$
\kappa'_0 - \varepsilon \leq V(x, t)\leq \kappa'^0, \qquad x< R(t)-z_0, \ t\in \R.
$$

\item Since $V_t(x,t)>0$ for all $x\leq R(t)$, there exists $\varepsilon'_1>0$ such that
    $$
    V_t(x,t) \geq \varepsilon'_1,\qquad x\in [R(t)-z_0, R(t)],\ t>0.
    $$
\item There exists $K'>0$ such that
$$
|V_x(x, t)|\leq K', \qquad x\leq R(t),\ t\in \R.
$$

\item Since $B'(0+0)=A_*>0$, for $\varepsilon$ in (a) and any $\alpha^0>0$, there exist $B_1, B_2, B_3, B_4>0$ depending only on $\kappa'_0,\kappa'^0,\varepsilon$ and $\alpha^0$ such that
$$
\begin{array}{l}
B(r) \leq B_1,\qquad 0\leq r\leq (1+\alpha^0) \kappa'^0,\\
B(r) \geq B_2, \qquad \kappa'_0 - \varepsilon \leq r \leq (1+\alpha^0)\kappa'^0,\\
B'(r) \leq B_3,\qquad 0\leq r\leq (1+\alpha^0) \kappa'^0,\\
B'(r) \geq B_4, \qquad 0\leq r\leq (1+\alpha^0) \kappa'^0.
\end{array}
$$

\item For any $\alpha^0>0$, there exists $H_0(\alpha^0)>0$  such that
$$
|h_x(x,r)|\leq H_0,\qquad x\in \R,\ 0\leq r\leq (1+\alpha^0)\kappa'^0.
$$

\item Since $B\in C^1([0,\infty))$ with $B'(0+0)=A_*$, $h(x,\cdot)\in C^1([0,\infty))$ with $h_v(x,0+0)= \kappa(x)f_u(x,0+0)A_*$, for any $\alpha^0>0$, there exists $H_1(\alpha^0)>0$ such that
$$
h(x,\alpha v) -  \frac{\alpha h(x,v) B(\alpha v)}{B(v)} \leq H_1(\alpha^0) (\alpha-1),\qquad x\in \R, 0<v\leq \kappa'^0, 1\leq \alpha\leq 1+\alpha^0,
$$
For any $\alpha_0\in (0,1)$, there exists $H_1(\alpha_0)>0$ such that
$$
\frac{\alpha h(x,v) B(\alpha v)}{B(v)} -h(x,\alpha v) \leq H_2(\alpha_0) (1-\alpha),\qquad x\in \R, 0<v\leq \kappa'^0, 1-\alpha_0 \leq \alpha\leq 1.
$$

In fact, using the L'Hospital's rule we have
\begin{eqnarray*}
& & \lim\limits_{\alpha \to 1}\frac{h(x,\alpha v)B(v) -  \alpha h(x,v) B(\alpha v)}{(\alpha-1)B(v)} \\
& =  & \lim\limits_{\alpha\to 1} \frac{h_v(x,\alpha v)vB(v)-h(x,v)B(\alpha v) -\alpha vh(x,v)B'(\alpha v)}{B(v)} \\
& = & \frac{h_v(x,v)vB(v)-h(x,v)B(v) - vh(x,v)B'(v)}{B(v)},
\end{eqnarray*}
which is continuous and bounded in $v\in (0,v^0]$ since it tends to $0$ as $v\to 0$.

\item For $\varepsilon$ in (a), by Lemma \ref{lem:conve to P}, there exists $T'_*>0$ such that
    $$
    |v(0,t)-q(0)|\leq M'e^{-\delta' t} <\varepsilon,\qquad t\geq T'_*.
    $$
\end{enumerate}
Note that the aforementioned parameters are relatively fixed and independent. When constructing the super- and sub-solutions solutions below, additional parameters must be selected, the later may depend on the previous ones.

\vskip 2mm
\noindent
\underline{\it Step 2. Construction of supersolution}. We choose large $\alpha_1^0$ and positive integer $n_1$ such that
\begin{equation}\label{super-ini}
v(x,T'_*)< (1+\alpha_1^0) V(x,n_1 T),\qquad x\leq r(T'_*).
\end{equation}
We choose $\beta_1^0>0$ large such that
\begin{equation}\label{def-beta10}
\varepsilon'_1 \left( \frac{\beta_1^0}{\alpha_1^0} - \frac{B_3 }{B_4} -1 \right) > \kappa'^0 + H_1(\alpha_1^0),
\end{equation}
and choose
\begin{equation}\label{def-delta1}
\delta_1 := \min\left\{1, \frac{\lambda_0}{2}\right\}.
%, \frac{\lambda_0 \alpha_1^0 \kappa'_0}{4(1+\alpha_1^0)n_1 L K'} \right\}.
\end{equation}
Then
$$
v_1 (x,t) := \alpha_1(t) V(x,t+\beta_1(t)), \qquad x\leq R(t+\beta_1(t)),\ t\geq 0,
$$
with
$$
\alpha_1(t):= 1+\alpha_1^0 e^{-\delta_1 t},\qquad \beta_1(t) := n_1 T + \frac{\beta_1^0}{\delta_1} \big(1- e^{-\delta_1 t} \big).
$$
For simplicity, we write
$$
\V := V(x ,t+\beta_1(t)),\quad
\V_1 := V_t (x, t+\beta_1(t)),
$$
$$
\V_2 := V_x (x ,t+\beta_1(t)) \mbox{\ \ and\ \ }
\V_{22}:= V_{xx} (x,t+\beta_1(t)).
$$
Then $\V_1 = B(\V)\V_{22}+\V_2^2 +h(x,\V)$.
We divide the domain into two parts:
$$
D_1:= \{(x,t)\mid x\leq R(t+\beta_1(t))-z_0,\ t\geq 0\},
$$
$$
E_1:= \{ (x,t) \mid R(t+\beta_1(t))-z_0\leq x\leq R(t+\beta_1(t)) ,\ t\geq 0\}.
$$
and verify that $v_1$ is a supersolution respectively in $D_1$ and $D_2$.

(1) In $D_1$, a direct calculation shows that
\begin{eqnarray*}
\mathcal{N}v_1 & := &
v_{1t}- B(v_1)v_{1xx}-v_{1x}^2-  h(x, v_1)\\
& = &
-\delta_1 \alpha_1^0 e^{-\delta_1 t} \V + \alpha_1 (1+\beta_1^0 e^{-\delta_1 t}) \V_1  - \alpha_1 B(\alpha_1 \V)\V_{22} - \alpha_1^2 \V^2_2 - h(x,\alpha_1 \V)\\
& = &
-\delta_1 \alpha_1^0 e^{-\delta_1 t} \V + \alpha_1 \left[ 1+\beta_1^0 e^{-\delta_1 t} - \frac{ B(\alpha_1 \V)}{B(\V)}  \right] \V_1 \\
& & + \frac{\alpha_1 B(\alpha_1 \V)}{B(\V)} \left[ \V_1- B(\V)\V_{22} -\frac{\alpha_1 B(\V)}{B(\alpha_1 \V)} \V^2_2 -  h(x,\V)\right] \\
& & +\alpha_1 \V B(\alpha_1 \V) \left[ \frac{h(x,\V)}{\V B(\V)}  - \frac{h(x,\alpha_1 \V)}{\alpha_1 \V B(\alpha_1 \V)} \right]
\end{eqnarray*}
We know from (a) that $\kappa'_0-\varepsilon \leq \V\leq \kappa'^0$. Then using Lemma \ref{lem:increasing} and using the conclusions in (c), (d) and (e) we have
\begin{eqnarray*}
\mathcal{N}v_1 & \geq & -\delta_1 \alpha_1^0 e^{-\delta_1 t} \V  + \alpha_1  \left[ \beta_1^0 e^{-\delta_1 t} - \frac{ B(\alpha_1 \V)-B(\V)}{B(\V)}  \right] \V_1  +\lambda_0 (\alpha_1 -1)\V \\
& \geq & [\lambda_0 -\delta_1] \alpha_1^0 e^{-\delta_1 t} \V + \alpha_1 \left[ \beta_1^0  - \frac{B_3 \alpha_1^0 }{B_4} \right] e^{-\delta_1 t} \V_1\\
& \geq & [\lambda_0 -\delta_1] \alpha_1^0 \frac{\kappa'_0}{2}  e^{-\delta_1 t} >0,\qquad \mbox{ in }D_1.
\end{eqnarray*}

(2) In $E_1$, (b) implies that $\V_1\geq \varepsilon'_1$, and so as above we have
\begin{eqnarray*}
e^{\delta_1 t}\mathcal{N}v_1 & \geq & -\delta_1 \alpha_1^0 \V  + \alpha_1  \left[ \beta_1^0  - \frac{ B(\alpha_1 \V)-B(\V)}{B(\V)} e^{\delta_1 t} \right] \V_1   \\
& & - \left[ h(x,\alpha_1 \V) - \frac{\alpha_1 B(\alpha_1 \V) h(x,\V)}{ B(\V)}  \right] e^{\delta_1 t}  \\
& \geq & -\delta_1 \alpha_1^0 \V  +  \alpha_1 \left[ \beta_1^0  - \frac{B_3 \alpha_1^0 }{B_4} \right] \V_1 - H_1(\alpha_1^0) \alpha_1^0 \\
& \geq & -\delta_1 \alpha_1^0 \kappa'^0 +  \alpha_1 \left[ \beta_1^0  - \frac{B_3 \alpha_1^0 }{B_4} \right]  \varepsilon'_1 - H_1(\alpha_1^0) \alpha_1^0\\
& \geq & \left[ \beta_1^0  - \frac{B_3 \alpha_1^0 }{B_4} \right]  \varepsilon'_1 - \left[ \alpha_1^0 \kappa'^0 + H_1(\alpha_1^0) \alpha_1^0\right] >0,\qquad \mbox{ in }E_1.
\end{eqnarray*}

(3) Finally, we verify the Darcy's law on the free boundary $r_1(t):= R(S_1(t))$ with $S_1(t):= t+\beta_1(t)$:
\begin{eqnarray*}
r'_1(t)+ v_{1x}(r_1(t)-0, S_1(t))  & = & R'(S_1(t)) S'_1(t)+ \alpha_1(t) V_x(r_1(t)-0, S_1(t))\\
& = & R'(S_1(t)) [S'_1(t) - \alpha_1(t) ]  \\
& = & R'(S_1(t))(\beta_1^0 -\alpha_1^0) e^{-\delta_1 t} >0,\qquad t>0.
\end{eqnarray*}

\vskip 2mm
\noindent
\underline{\it Step 3. Construction of subsolution}. Since the solution $v(x,t)$ always has compact support, to give a lower estimate we much split it into two parts: the left part in $x\in [l(t),0]$ and right part in $x\in [0,r(t)]$.
So, when we compare a subsolution with $v$, besides the inequalities satisfied for subsolutions in the domain, we have to compare their values at $x=0$ and their slopes on the right free boundaries. The conclusion in Theorem \ref{thm:Cauchy} (iii) is crucial in  the comparison at $x=0$.

In the following, when we write $v(x,t)$ and $V(x,t)$, we mean they are defined for $x\geq 0$, each has a right free boundary and takes value $0$ beyond the boundary.

We choose $\alpha_2^0\in (0,1)$ and a nonnegative integer $n_2$ such that
\begin{equation}\label{sub-ini}
v(x,T'_*) >   (1- \alpha_2^0) V(x, n_2 T),\qquad 0\leq x\leq R(n_2 T).
\end{equation}
Choose $T'_*$ larger if necessary such that
\begin{equation}\label{p0>M'}
p(0)\alpha_2^0 >M' e^{-\delta' T'_*}.
\end{equation}
Then choose $\beta_2^0>0$ large such that
\begin{equation}\label{def-beta20}
\varepsilon'_1 (1-\alpha_2^0) \left( \beta_2^0 - \frac{B_3 \alpha_2^0 }{B_4} - \alpha_2^0\right) > \kappa'^0 \alpha_2^0 + H_2(\alpha_2^0) \alpha_2^0,
\end{equation}
and for
\begin{equation}\label{def-delta2}
\delta_2 := \min\left\{1, \delta',\lambda_0 (1-\alpha_2^0) \frac{\kappa'_0}{2}B_2  \right\},
%, \frac{\lambda_0 \alpha_1^0 \kappa'_0}{4(1+\alpha_1^0)n_1 L K'} \right\}.
\end{equation}
we define
$$
v_2 (x,t) := \alpha_2(t) V(x,t+\beta_2(t)), \qquad 0\leq x\leq R(t+\beta_2(t)),\ t\geq 0,
$$
with
$$
\alpha_2(t):= 1 - \alpha_2^0 e^{-\delta_2 t},\qquad \beta_2(t) := n_2 T + \frac{\beta_2^0}{\delta_2} (e^{-\delta_2 t}-1) .
$$
We now verify that $v_2$ is a subsolution.

(1) On the left boundary $x=0$, by Theorem \ref{thm:Cauchy} (ii) and \eqref{p0>M'}, for all $t\geq 0$ we have
$$
v_2 (0,t) \leq (1-\alpha_2^0 e^{-\delta_2 t}) p(0)\leq p(0)-M' e^{-\delta' (t+T'_*)}  <v(0,t+T'_*),\quad t>0.
$$

(2) On the right free boundary $r_2(t):= R(S_2(t))$ with $S_2(t):= t+\beta_2(t)$ of $v_2(x,t)$, we have
\begin{eqnarray*}
r'_2(t)+ v_{2x}(r_2(t)-0, S_2(t))  & = & R'(S_2(t)) S'_2(t)+ \alpha_2(t) V_x(r_2(t)-0, S_2(t))\\
& = & R'(S_2(t)) [S'_2(t) - \alpha_2(t) ]  \\
& = & R'(S_2(t))(-\beta_2^0 +\alpha_2^0) e^{-\delta_2 t} <0,\qquad t>0.
\end{eqnarray*}

(3) In the domain $D_2:= \{(x,t)\mid 0\leq x\leq R(t+\beta_2(t))-z_0,\ t\geq 0\}$, $V(x,t+\beta_2(t))\in [\kappa'_0-\varepsilon, \kappa'^0]$. As above we have
\begin{eqnarray*}
e^{\delta_2 t} \mathcal{N}v_2 & \leq &
 \delta_2 \alpha_2^0 V + \alpha_2  \left[ -\beta_2^0 + \frac{ B(V) - B(\alpha_2 V)}{B(V)} e^{\delta_2 t} \right] V_t  \\
 & & + \left[ \frac{\alpha_2 B(\alpha_2 V)}{B(V)} h(x,V) - h(x,\alpha_2 V) \right]e^{\delta_2 t}\\
 & \leq & \left[ \delta_2  -\lambda_0 (1-\alpha_2^0) \frac{\kappa'_0}{2}B_2  \right] \alpha_2^0 V + \alpha_2  \left[ -\beta_2^0 + \frac{B_3 \alpha_2^0 }{B_4}  \right] V_t  <0,\qquad \mbox{ in }D_2.
\end{eqnarray*}

(4) In $E_2 := \{ (x,t) \mid R(t+\beta_2(t))-z_0\leq x\leq R(t+\beta_2(t)),\ t\geq 0\}$, $V_t\geq \varepsilon'_1$, and so
\begin{eqnarray*}
e^{\delta_2 t} \mathcal{N}v_2 & \leq &
 \delta_2 \alpha_2^0 V  + \alpha_2  \left[ -\beta_2^0 + \frac{ B(V) - B(\alpha_2 V)}{B(V)} e^{\delta_2 t} \right] V_t + H_2(\alpha_2^0) \alpha_2^0  \\
 & \leq & \alpha_2^0 \kappa'^0 + H_2(\alpha_2^0) \alpha_2^0  + \alpha_2(t) \left[ -\beta_2^0 + \frac{ B_3 \alpha_2^0 }{B_4} \right] \varepsilon'_1 <0,\qquad \mbox{ in }E_2.
\end{eqnarray*}

\vskip 2mm
\noindent
\underline{\it Step 4. Convergence of $r(t)-R(t)$.} By the previous two steps and using the fact $V_t>0$ we see that
\begin{equation}\label{V2-v-V1}
(1-\alpha^*_2 e^{-\delta_2 t}) V(x,t+s_2) \leq v(x,t)\leq (1+\alpha^*_1 e^{-\delta_1 t}) V(x,t+s_1),\qquad x\in \R,\ t\geq T'_*,
\end{equation}
for some $\alpha^*_i, s_i \in \R\ (i=1,2)$, and so
$$
R(t+s_2)\leq r(t)\leq R(t+s_1),\qquad t\geq T'_*.
$$
Now for any given $s \in [s_2,s_1]$ we compare $v(x,t)$ with $V(x,t+s)$. Note that $v(\cdot,t)$  has two free boundaries and $V(\cdot,t+s)$ has only a right one. Both of them satisfy the Darcy's law on their free boundaries. Hence, using the intersection number properties Proposition \ref{prop:inter} (which is developed in \cite{LouZhou} for degenerate equations), we see that, for sufficiently large $t$, these two solutions have only simple intersections. In particular, $r(t)-R(t+s)$ does not change sign for large $t$. This fact is true for all $s\in [s_2,s_1]$, and so
$$
r(t) - R(t+t^*)\to 0\quad \mbox{\ \ as\ \ } t\to \infty,
$$
for some $t^*\in [s_2,s_1]$. By the same reason as above, $r(t)-R(t+t^*)$ does not change sign for all large $t$.

\vskip 2mm
\noindent
\underline{\it Step 5. Convergence of $v(x,t)-V(x,t+t^*)$ in $L^\infty([0,\infty))$ topology.}
For any large positive integer $k$, we define
$$
\tilde{v}_k (x,t):= v(x+kL, t+kT-t^*).
%,\qquad \tilde{V}_k(x,t):= V(x+kL,t+kT)\equiv V(x,t).
$$
Its right free boundary $\tilde{r}_k(t)$ satisfies
$$
\tilde{r}_k (t) = r(t+kT-t^*)-kL = [r(t+kT-t^*)- R(t+kT)] +R(t)\to R(t)\quad \mbox{\ \ as\ \ }k\to \infty.
$$
This implies $\tilde{r}_k(t)$ are near $R(t)$ for all large $k$. Then there is a subsequence $\{k_j\}$ of $\{k\}$ such that
$$
\tilde{v}_{k_j} (x,t)\to \widehat{V}(x,t)\quad \mbox{\ \ as\ \ }j\to \infty,
$$
in the topology of $L^\infty_{loc}(\R^2)$ and also in $C^{2+\alpha, 1+\alpha/2}_{loc}(D)$ ($\alpha\in (0,1)$), where
$\widehat{V}(x,t)$ is a very weak entire solution of \eqref{p-v} with right free boundary $R(t)$, and $D:= \{(x,t)\mid x<R(t), t\in \R\}$. (As can be seen in \cite{DB-F0, DB-F} and \cite[Theorem 7.18]{Vaz-book}, $\tilde{v}_k(x,t)$ actually have local H\"{o}lder bound, uniformly in $k$. Hence, the above convergence also holds in $C^{\alpha,\frac{\alpha}{2}}_{loc}(\R^2)$ topology, for some $\alpha\in (0,1)$.) Recall that $V(x+kL,t+kT) \equiv V(x,t)$. Then it follows from the definition of $\tilde{v}_k$ and \eqref{V2-v-V1} that
$$
V(x,t-t^* + s_2) \leq \widehat{V}(x,t)\leq V(x,t-t^*+s_1),\qquad x,t\in \R.
$$
Now we consider the intersection number between $v(x,t)$ and $V(x,t)$. Using Proposition \ref{prop:inter} again, we see that $v(x,t)-V(x,t)$ have only simple intersections for large
$t$. Therefore, as in \cite[Lemma 2.6]{DuMatano} or \cite[section 6]{Polacik-MAMS}, either the limit $\widehat{V}(x,t)$ of $\tilde{v}_k(x,t)$ is identically equal to $V(x,t)$, or $\widehat{V}(x,t)$ will not meet $V(x,t)$ on their boundaries (since they both satisfy the Darcy's law, their intersection number will decrease strictly once they meet on their boundaries). The latter, however, is obviously wrong since both of their boundaries are $R(t)$. Consequently, $\widehat{V}\equiv V$. In addition, since $V$ is unique, we actually have $\tilde{v}_k(x,t)\to V(x,t)\ (k\to \infty)$.

For any small $\varepsilon>0$, by \eqref{U-to-p-exp}, there exists $z_0>0$ such that,
$$
0<q(x) - V(x,t) \leq \varepsilon\quad \mbox{\ \ for\ \ }x\leq R(t+s_1) -z_0,\ t\in \R.
$$
Then, for any $t\gg 1$ and $x\in J_1(t):= [0, R(t+s_1)-z_0]$, by \eqref{V2-v-V1} we have
\begin{equation}\label{error-far-left}
% \nonumber % Remove numbering (before each equation)
\begin{array}{lll}
  |v(x,t)- V(x,t+t^*)| & \leq & V(x,t+s_1) - V(x,t+s_2) + \alpha^*_1 \kappa'^{0} e^{-\delta_1 t} +  \alpha^*_2 \kappa'^{0} e^{-\delta_2 t} \\
   &\leq &  (q(x)+\varepsilon) - (q(x)-\varepsilon) +  \alpha^*_1 \kappa'^{0} e^{-\delta_1 t} +  \alpha^*_2 \kappa'^{0} e^{-\delta_2 t}\\
    &\leq & 3 \varepsilon,
\end{array}
\end{equation}
provided $t$ is sufficiently large, to say, $t\geq T_1$. Next, we consider the error of $v-V$ on  the interval $J_2 (t):= [R(t+s_2)-z_0, R(t+s_1)]$ (which overlaps with $J_1(t)$ and contains $[r(t)-z_0, r(t)] \cup [R(t+t^*)-z_0, R(t+t^*)]$).
For any large $t$ with $t= kT + \hat{t}$ (where $\hat{t}\in [0,T)$) and any $x\in J_2(t)$, we have
$$
R(\hat{t}+s_2) -z_0 \leq R(t+s_2) -kL -z_0 \leq \hat{x}:= x-kL \leq R(t+s_1) -kL = R(\hat{t}+s_1).
$$
Thus,
\begin{eqnarray*}
|v(x,t-t^*)- V(x,t)| & = & |v(\hat{x}+kL,kT+\hat{t}-t^*)-V(\hat{x}+kL,kT+\hat{t})| \\
& = & |\tilde{v}_k (\hat{x},\hat{t})-V(\hat{x},\hat{t})|\to 0,\mbox{\ \ \ \ as\ \ }k\to \infty,
\end{eqnarray*}
uniformly in $(\hat{x},\hat{t})\in \hat{D}:= \{(x,t)\mid R(s_2) -z_0 \leq x\leq R(T+s_1),\ t\in [0,T]\}$. This implies that when $t$ is sufficiently large, to say $t\geq T_2$,  there holds $|v(x,t)-V(x,t+t^*)|\leq \varepsilon$ for any $x\in J_2(t)$.
Combining together with \eqref{error-far-left} we obtain the second convergence in \eqref{u-profile-right} (in the form of function $v$).

\vskip 2mm
\noindent
\underline{\it Step 6. Exponential convergence.} By \eqref{U-to-p-exp} and \eqref{V2-v-V1} we have
\begin{equation}\label{q-M<v<q}
(1-\alpha^*_2 e^{-\delta_2 t}) [q(x) -M^*_2 e^{\delta_*(x-c^* t)}]  \leq v(x,t)\leq (1+\alpha^*_1 e^{-\delta_1 t}) q(x),\qquad x\geq 0,\ t\gg 1,
\end{equation}
where $M^*_2 :=  M_* e^{-\delta_*c^* s_2}$.  By $V(x,t+s_2)\leq V(x,t+t^*)\leq V(x,t+s_1)$, \eqref{q-M<v<q} remains valid if $v(x,t)$ is replaced by $V(x,t+t^*)$. Hence, for $x\geq 0$ and $t\gg 1$, there holds
\begin{eqnarray*}
|v(x,t)-V(x,t+t^*)| & \leq & [(1+\alpha^*_1 e^{-\delta_1 t}) q(x)] - \big\{(1-\alpha^*_2 e^{-\delta_2 t}) [q(x) -M^*_2 e^{\delta_*(x-c^* t)}] \big\} \\
& \leq & M^*_2 e^{\delta_* (x-c^* t)} + (\alpha^*_1 +\alpha^*_2)\kappa'^{0} e^{-\delta_3 t}\\
& = & M^*_2 e^{\delta_* (R(t+t^*) -c^* t)} e^{-\delta_* (R(t+t^*)-x)} + (\alpha^*_1 +\alpha^*_2)\kappa'^{0} e^{-\delta_3 t}\\
& \leq & K e^{-\delta_* (R(t+t^*)-x)} + K e^{-\delta_3 t},
\end{eqnarray*}
where $\delta_3 := \min\{\delta_1,\delta_2\}$ and $K>0$ is independent of $x,t$.  This proves the estimate in \eqref{right-exp}.

This completes the proof of Theorem \ref{thm:Cauchy}. \hfill \qed


\begin{thebibliography}{123456}
\bibitem{Ang}
S. B. Angenent,
The zero set of a solution of a parabolic equation, J. Reine Angew. Math., 390 (1988), 79-96.


%\bibitem{A1969}
%D. G. Aronson, Regularity properties of flows through porous media, SIAM J. Appl. Math., 17 (1969), 461-467.

\bibitem{A1}
D. G. Aronson, Density-dependent interaction-diffusion systems, Dynamics and modelling of reactive systems (Proc. Adv. Sem., Math. Res. Center, Univ. Wisconsin, Madison, Wis., 1979), pp. 161-176, Publ. Math. Res. Center Univ. Wisconsin, 44, Academic Press, New York-London, 1980.

\bibitem{ACP}
D. G. Aronson, M. G. Crandall and L. A. Peletier, Stabilization of
solutions of a degenerate nonlinear diffusion problem, Nonlinear Anal.
TMA, 6 (1982), 1001-1022.


\bibitem{AW1}
D. G. Aronson and H. F. Weinberger,
Nonlinear diffusion in population genetics, conbustion, and nerve pulse propagation,
In: Partial Differential Equations and Related Topics, Lecture Notes in Math. 446, Springer, Berlin, 5-49, 1975.

%\bibitem{BH2002}H. Berestycki and F. Hamel, Front propagation in periodic excitable media,
%Comm. Pure Appl. Math., 55 (2002), 949-1032.

%\bibitem{BHN2005}H. Berestycki, F. Hamel and N. Nadirashvili, The speed of propagation for KPP type problems. I. Periodic framework, J. Eur. Math. Soc. (JEMS), 7 (2005), 173-213.

%\bibitem{BH2007}H. Berestycki and F. Hamel,
%Generalized travelling waves for reaction-diffusion equations, Contemp. Math., 446,
%American Mathematical Society, Providence, RI, 2007, 101-123.


\bibitem{Biro} Z. Bir\'{o}, Stability of travelling waves for degenerate reaction-diffusion equations of KPP-type, Adv. Nonlinear Stud., 2 (4)
(2002), 357-371.


\bibitem{BPS}P. Brunovsk\'{y}, P. Pol\'{a}\v{c}ik and B. Sandstede,  {\em Convergence in general periodic parabolic equations in one space dimension},  Nonl. Anal.,  18 (1992), 209-215.

\bibitem{DB-F0} DiBenedetto, E., Friedman, A.: H\"{o}lder estimates for nonlinear degenerate parabolic systems. J. Reine Angew. Math. 357 (1985), 1-22.

\bibitem{DB-F} DiBenedetto, E., Friedman, A.: Addendum to \lq\lq H\"{o}lder estimates for nonlinear degenerate parabolic systems". J. Reine Angew. Math. 363 (1985), 217-220.


\bibitem{DHZ}
W. Ding, F. Hamel and X.-Q. Zhao,
Bistable pulsating fronts for reaction-diffusion equations in a periodic habitat, Indiana Univ. Math. J.,  66 (2017), no. 4, 1189-1265.

\bibitem{DGM}
A.Ducrot, T.Giletti and H.Matano,
Existence and convergence to a propagating terrace in one-dimensional reaction-diffusion equations,
Trans. Amer. Math. Soc., 366 (2014), 5541-5566.

%\bibitem{DuLiang} Y. Du and X. Liang,  {\em Pulsating semi-waves in periodic media and spreading speed determined by a free boundary model},  Ann. I. H. Poincar\'{e} - AN., 32 (2013), 279-305.

\bibitem{DuLou} Y. Du and B. Lou, {\em Spreading and vanishing in nonlinear diffusion problems with free boundaries}, J. Eur. Math. Soc., 17 (2015), 2673-2724.

\bibitem{DuMatano} Y. Du and H. Matano, {\em Convergence and sharp thresholds for propagation in nonlinear diffusion problems}, J. Eur. Math. Soc., 12 (2010), 279-312.

\bibitem{DQZ}
Y. Du, F. Quir\'{o}s and M. Zhou,
Logarithmic corrections in Fisher-KPP problems for the porous medium equations, J. Math.
Pures Appl., 136 (2020), 415-455.

\bibitem{FM}
P. C. Fife and J. B. McLeod, The approach of solutions of nonlinear diffusion equations to
travelling front solutions, Arch. Ration. Mech. Anal., 65 (1977), 335-361.

%\bibitem{Fisher} R. A. Fisher, The wave of advance of advantageous genes, Ann. Eugenics, 7 (1937), 335-369.

\bibitem{Garriz}
A. G\'{a}rriz,
Propagation of solutions of the porous medium equation with reaction and their travelling wave behaviour, Nonl. Anal., 195 (2020), 111736, pp. 23.

\bibitem{GK}
B. H. Gilding and R. Kersner,
Travelling waves in nonlinear diffusion-convection reaction, in: Progress in Nonlinear
Differential Equations and their Applications, Vol. 60, Birkh\"{a}user Verlag, Basel, ISBN: 3-7643-7071-8.

\bibitem{GN}
W. S. C. Gurney and R. M. Nisbet, A note on non-linear population transport, J. Theor. Biol., 56 (1976), 249-251.

\bibitem{GurMac}
M. E. Gurtin and R. C. MacCamy, On the diffusion of biological
populations, Math. Biosci., 33 (1977), 35-49.

%\bibitem{KPP} A. N. Kolmogorov, I . G. Petrovski and N. S. Piskunov,
%A study of the diffusion equation with increase in the amount of substance, and its application to a biological problem, Bull. Moscow Univ. Math. Mech., 1 (1937) 1-25.

\bibitem{Knerr}
B. F. Knerr, The porous medium equation in one dimension, Tran. Amer. Math. Soc., 234 (2) (1977), 381-415.

\bibitem{LiLou}
F. Li and B. Lou, Asymptotic behavior of solutions of a degenerate diffusion equation with a multistable reaction, arXiv: 2501.17177v1.


%\bibitem{Lou2024}
%B. Lou,
%The sharp traveling wave of a porous media equation with a spatially periodic reaction, {\it arXiv}: 2402.13269v2.

\bibitem{LouZhou}
B. Lou and M. Zhou, Convergence of solutions of the porous medium equation with reactions, J. Eur. Math. Soc., (2024), DOI:10.4171/JEMS/1531.

%\bibitem{Ma1978}H. Matano, {\em Convergence of solutions of one-dimensional semilinear parabolic equations}, J. Math. Kyoto. Univ., 18 (1978), 221-227.

\bibitem{NS}
W. Newman and C. Sagan,
Galactic civilizations: population dynamics and interstellar diffusion, Icarus, 46 (1981), 293-327.

\bibitem{PV}
A. de Pablo and J. L. V\'{a}zquez, Travelling waves and finite propagation
in a reaction-diffusion equation, J. Differential Equations, 91 (1991),  19-61.

\bibitem{Polacik-MAMS} P. Pol\'{a}\v{c}ik,
Propagating terraces and the dynamics of front-like solutions of reaction-diffusion equations on $\R$, {\it Mem. Amer. Math. Soc.}, {\bf 264}(2020), no. 1278, v+87 pp.

%\bibitem{OKC}
%O. A. Ole\u{i}nik, A. S. Kala\v{s}inkov and Y.-L. \v{C}\v{z}ou, The Cauchy problem and boundary problems for equations of the type of non-stationary filtration (Russian), Izv. Akad. Nauk SSSR. Ser. Mat., 22 (1958), 667-704.
%
\bibitem{Sacks}
P. E. Sacks, The initial and boundary value problem for a class of degenerate parabolic equations, Comm. Partial Differential Equations, 8 (1983), 693-733.

\bibitem{SGM}
F. Sanchez-Garduno and P.K. Maini, Existence and uniqueness of a sharp travelling wave in degenerate non-linear diffusion Fisher-KPP equations, J. Math. Biol., 33 (1994), 163-192.

\bibitem{She}
J. A. Sherratt, On the form of smooth-front travelling waves in a reaction-diffusion equation with degenerate nonlinear diffusion, Math. Model. Nat. Phenom., 5 (2010), 64-79.

\bibitem{Vaz-book}
J. L. V\'{a}zquez, The Porous Medium Equation Mathematical Theory, Clarendon Press, Oxford, 2007.


\bibitem{WuYin-book} Z. Wu, J. Zhao, J. Yin and H. Li,
Nonlinear Diffusion Equations, World Scientific Publishing Co., Inc., River Edge, NJ, 2001. xviii+502 pp.
	

%\bibitem{Ze1968}T. J. Zelenyak, {\em Stabilization of solutions of boundary value problems for a second-order parabolic equation with one space variable} (Russian),  Differencial'nye Uravnenija, 4 (1968), 34-45.

%\bibitem{Xin1991}
%X. Xin, Existence and stability of travelling waves in periodic media governed by a bistable nonlinearity, J. Dynam. Differential Equations, 3 (1991), 541-573.

%\bibitem{Xin2000}
%J. Xin, Front propagation in heterogeneous media, SIAM Rev., 42 (2000), 161-230.

%\bibitem{Xin-book}
%J. Xin, An Introduction to Fronts in Random Media, Surv. Tutor. Appl. Math. Sci., 5
%Springer, New York, 2009.

%\bibitem{Zla}
%A. Zlato\v{s},
%Sharp transition between extinction and propagation of reaction, J. Amer. Math. Soc., 19 (2006), 251-263.

\end{thebibliography}
\end{document}